\newtheorem{theorem}{Theorem}[section]
\theoremstyle{definition}
\newtheorem{proposition}[theorem]{Proposition}
\newtheorem{lemma}[theorem]{Lemma}
\newtheorem{remark}[theorem]{Remark}
\newtheorem{corollary}[theorem]{Corollary}
\def\BN{\mathbbm N}
\def\BZ{\mathbbm Z}
\def\BQ{\mathbbm Q}
\def\BC{\mathbbm C}
\def\BT{\mathbbm T}
\def\calA{\mathcal A}
\def\calC{\mathcal C}
\def\calT{\mathcal T}
\def\calS{\mathcal S}
\def\calB{\mathcal B}
\def\pt{\partial}
\def\ID{I_{\Delta}}
\def\a{\alpha}
\def\b{\beta}
\def\g{\gamma}
\def\ve{\varepsilon}
\def\th{\theta}
\def\Re{\mathrm{Re}}
\def\be{\begin{equation}}
\def\ee{\end{equation}}
\def\ID{I_{\Delta}}
\def\IKD{I^{\Delta}}
\def\Ipre{I^{\mathrm{pre}}}
\def\Ibal{I^{\mathrm{bal}}}
\def\Abar{\overline{A}}
\def\Bbar{\overline{B}}
\def\Cbar{\overline{C}}
\def\emu{e_{\mu}}
\def\elambda{e_{\lambda}}
\def\myu{\mathsf{u}}
\def\myv{\mathsf{v}}
\def\myfun{\eta}
\def\fourier{\mathsf{F}}
\def\mypos{\mathsf{q}}
\def\mymom{\mathsf{p}}
\def\mymu{\mu}
\def\myh{\mathsf{h}}
\def\myz{\mathsf{z}}
\def\myH{\mathsf{H}}
\def\myZ{\mathsf{Z}}
\def\myi{\mathsf{J}}
\newcommand{\im}{\mathsf{i}}
\newcommand{\poc}[2]{\left(#1\right)_{#2}}
\begin{document}
\title[A meromorphic extension of the 3D Index]{A meromorphic extension of 
the 3D Index}
\author{Stavros Garoufalidis}
\address{School of Mathematics \\
         Georgia Institute of Technology \\
         Atlanta, GA 30332-0160, USA \newline
         {\tt \url{http://www.math.gatech.edu/~stavros}}}
\email{stavros@math.gatech.edu}
\author{Rinat Kashaev}
\address{Section de Math\'ematiques, Universit\'e de Gen\`eve \\
2-4 rue du Li\`evre, Case Postale 64, 1211 Gen\`eve 4, Switzerland \newline
         {\tt \url{http://www.unige.ch/math/folks/kashaev}}}
\email{Rinat.Kashaev@unige.ch}
\thanks{
{\em Key words and phrases:}
ideal triangulations, 3-manifolds, ideal triangulations, 3D index, 
locally compact abelian groups, quantum dilogarithm, state-integrals,
meromorphic functions.
}

\date{June 24, 2017} 
\dedicatory{To Don Zagier, on the occasion of his 65th birthday}

\begin{abstract}
Using the locally compact abelian group $\BT \times \BZ$, we assign a 
meromorphic function to each ideal triangulation of a 3-manifold with torus
boundary components. The function is invariant under all 2--3
Pachner moves, and thus is a topological invariant of the underlying 
manifold. If the ideal triangulation has a strict angle structure, our
meromorphic function can be expanded into a Laurent power series whose 
coefficients are formal power series in $q$ with integer coefficients 
that coincide with the 3D index of \cite{DGG2}. Our meromorphic function 
can be computed explicitly from the matrix of the gluing equations of a 
triangulation, and we illustrate this with several examples.
\end{abstract}

\maketitle

{\footnotesize
\tableofcontents
}



\section{Introduction}
\label{sec.intro}

\subsection{The 3D-index of Dimofte-Gaiotto-Gukov}
\label{sub.intro}

In a recent breakthrough in mathematical physics, the physicists Dimofte, 
Gaiotto and Gukov \cite{DGG1,DGG2} introduced the \emph{3D-index}, a powerful 
new invariant of an ideal triangulation $\calT$ of a compact orientable 
3-manifold $M$ with non-empty boundary consisting of tori. 
The 3D-index was motivated by the study of the low energy limit of a famous 
6-dimensional $(2,0)$ superconformal field theory, and seems to contain 
a great deal of information about the geometry and topology of the 
ambient manifold. For suitable ideal triangulations, the 3D-index is 
a collection of formal Laurent power series in a variable $q$,
parametrized by a choice of peripheral homology class, i.e.,
an element of $H_1(\partial M,\BZ)$. 

Physics predicts that the 3D-index is independent of the triangulation 
$\calT$ and that it is a topological invariant of the ambient manifold.
However, there is a subtlety: the 3D-index itself (which is a sum over some
$q$-series over a lattice) is only defined for suitable triangulations, and 
it is invariant under 2--3 moves of such triangulations. It is not known
whether suitable triangulations are connected under 2--3 moves, and it is 
known that some 3-manifolds (for instance, the unknot) have no suitable 
triangulation. It was shown in~\cite{Ga:3D, GHRS} that a triangulation
is suitable if and only if it is 1-efficient, i.e.,
has no normal surfaces which are topologically 2-spheres or tori. 
Thus, the connected sum of two nontrivial knots, or the Whitehead double of 
a nontrivial knot has no 1-efficient triangulations.
With some additional work, one can extract from the 3D-index a topological 
invariant of hyperbolic 3-manifolds~\cite{GHRS}. 

This partial success in constructing a topological invariant suggests the
existence of an invariant of ideal triangulations unchanged under all 
2--3 Pachner moves. The construction of such an invariant is the goal
of our paper. Indeed, to any ideal triangulation, we associate
an invariant which is a meromorphic function of the peripheral variables, 
and, for triangulations with strict angle structures, the coefficients
of its expansion into Laurent series  coincides with the 3D-index of~\cite{DGG2}. 
Our meromorphic function is an example of a topological invariant associated 
to the self-dual locally compact abelian group (abbreviated LCA group) 
$\BT \times \BZ$. A more detailed formulation of our results follows.

In a sense, our paper does the opposite from that of~\cite{GK:qseries}. In
the latter paper we expressed state integral invariants (which are analytic
functions in a cut plane) in terms of $q$-series, whereas in the present paper
we assemble $q$-series into meromorphic state-integral invariants. Our work
illustrates the principle that some state-integrals can be formulated in 
terms of $q$-series and vice-versa. 

\subsection{Our results}
\label{sub.results}

Fix an ideal triangulation $\calT$ of an oriented 3-manifold $M$ whose boundary
consists of $r$ tori, and choose peripheral curves that form a basis of
$H_1(\pt M,\BZ)$. 
To simplify notation, we will present our results only in the case when 
$M$ has a single torus boundary though our statements and proofs
remain valid in the general case. 

After a choice of a meridian and
longitude, we can identify the complex torus $\BT_M=H^1(\pt M,\BC^*)$
with $(\BC^*)^2$ where the latter is given by the coordinates 
$(\emu,\elambda)$. Throughout the paper, $q$ will denote a 
complex number inside the unit disk: $|q|<1$. When $q=-e^h$ with $\Re(h) <0$
and $z \in \BC$, we define $(-q)^z=e^{z h}$. For
$r,s \in \BQ$, we define the associated $q$-rays of the complex torus by
\be
\label{eq.rse}
\Sigma_{r,s} = \{(\emu,\elambda) \in (\BC^*)^2 \,\, |
\,\, \emu^r \elambda^s \in q^\BN \}
\ee
where $\BN=\BZ_{\ge0}$ is the set of nonnegative integers. A shifted 
$q$-ray is a subset of the complex torus of the form $\ve q^t \Sigma_{r,s}$
for some $t \in \BQ$ and $\ve = \pm 1$.


\begin{theorem}
\label{thm.1}
With the above assumptions, there exists a meromorphic function
$$
I_{\calT}(q)\colon (\BC^*)^2\ni(\emu,\elambda) \mapsto 
I_{\calT,\emu,\elambda}(q)\in \BC\cup\{\infty\}
$$
with the following properties:
\begin{itemize}
\item[(a)]
$I_{\calT}(q)$ is invariant under 2--3 Pachner moves.
\item[(b)]
$I_{\calT}(q)$ is given by a balanced state-integral depending only on the 
Neumann--Zagier matrices of the gluing equations of $\calT$.
\item[(c)]
The singularities of $I_{\calT}(q)$ are contained the union of finitely many
shifted $q$-rays.
\end{itemize}
\end{theorem}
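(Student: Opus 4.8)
The plan is to build $I_{\calT}(q)$ as a \emph{state-integral over the self-dual LCA group} $\BT\times\BZ$, with one copy of the group per tetrahedron, following the template of state-integral invariants but with the group $\BR$ replaced by $\BT\times\BZ$ (so that $\int_{\BR}$ becomes $\int_{\BT}\sum_{n\in\BZ}$). Concretely, I would first attach to each tetrahedron of $\calT$ the quantum dilogarithm $\Phi_{\BT\times\BZ}$ of the group $\BT\times\BZ$, a function of a variable $(w,n)\in\BT\times\BZ$ admitting an explicit product expansion in $q$-Pochhammer symbols for $|q|<1$; its argument is the affine combination of the tetrahedron's shape data dictated by the Neumann--Zagier matrices $\mathbf A,\mathbf B$ and by the peripheral variables $(\emu,\elambda)$. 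I would then form the product of these tetrahedral weights with the Gaussian character of $\BT\times\BZ$ determined by the symplectic NZ data, and integrate over the subgroup of $(\BT\times\BZ)^{N}$ cut out by the balancing (edge) equations. By construction this yields a balanced state-integral depending only on the NZ matrices, which is the content of (b); the one thing to check here is that the naive integrand is genuinely \emph{balanced}, i.e.\ that the $q$-ratios of the periodic and discrete characters cancel so that the sum over $\BZ$ is not merely formal, and this follows from the NZ relations $\mathbf A\mathbf B^{t}=\mathbf B\mathbf A^{t}$ together with the column-sum conditions, exactly as for the combinatorial flattening used to define the $3$D index.

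Second, convergence and meromorphy. The $\BT$-factors contribute a finite-dimensional integral over a compact torus, which is harmless; the point is the sum over the $\BZ$-factors. I would establish a decay estimate for $\Phi_{\BT\times\BZ}$ in the discrete direction: on the region of summation the $q$-Pochhammer factors either stay bounded away from $0$ and $\infty$ (those in the denominators tend to $(q;q)_\infty\neq0$) or grow only sub-Gaussianly in the summation index, while the Gaussian character contributes a genuine factor $|q|^{cn^{2}}$ with $c>0$, which dominates. Absolute convergence, uniform on compact subsets of $(\BC^*)^2$ away from the zero loci of the relevant Pochhammers, then gives a function holomorphic off a locally finite union of analytic hypersurfaces; meromorphy follows because each such hypersurface is a component of the divisor of a Pochhammer factor, along which the integrand has at worst a finite-order pole controlled by how many $\BZ$-summands become singular simultaneously.

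Third, invariance under $2$--$3$ moves, which is (a). This reduces to the \emph{pentagon identity} for $\Phi_{\BT\times\BZ}$: the product of the two tetrahedral weights on one side of a $2$--$3$ move equals, after integrating out the internal $\BT\times\BZ$-variable created or destroyed by the move, the product of the three weights on the other side, once the Gaussian kernels and the reindexing of the NZ data are taken into account. I would prove the pentagon identity directly from the Pochhammer product expansion of $\Phi_{\BT\times\BZ}$ (it is the $\BT\times\BZ$ avatar of the classical pentagon for the dilogarithm), and separately verify the auxiliary invariances --- independence of the choice of edges used to write the gluing equations and of the normalization of the NZ flattening --- which are handled by the inversion and quasi-periodicity relations of $\Phi_{\BT\times\BZ}$. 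Standard bookkeeping of the balancing subspaces under the move then yields (a).

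Finally, part (c): locating the singularities. From the product expansion, every pole of the integrand as a function of $(\emu,\elambda)$ lies on a locus of the form $\emu^{r}\elambda^{s}q^{t}w^{k}\in q^{-\BN}$, with $(r,s,t,k)$ ranging over a finite list read off from the columns of $\mathbf A$, $\mathbf B$ and the peripheral gluing data. Performing $\int_{\BT}dw$, the result is holomorphic except where two such moving poles pinch the contour of integration, and a residue analysis shows the pinch locus is a finite union of shifted $q$-rays $\ve q^{t}\Sigma_{r,s}$. The delicate point --- and the step I expect to be the main obstacle --- is the sum over $\BZ$: a priori each summand contributes $q$-rays whose shift depends on the summation index $n$, so one must show these infinitely many candidate rays collapse to finitely many. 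I would argue this from the balancing relations, which force the $n$-dependence of the shifts to enter only through $q^{\BN}$ and hence not to create new rays, together with the decay estimate of Step 2, which bounds the order of the resulting pole; this confines all singularities of $I_{\calT}(q)$ to the union of the finitely many shifted $q$-rays already visible in a single tetrahedral weight, giving (c).
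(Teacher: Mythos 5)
Your proposal follows the general Andersen--Kashaev template (one copy of the LCA group per tetrahedron, a Gaussian kernel from the symplectic data, integration over a balancing subgroup), whereas the paper works quite differently: it attaches one variable $x_i\in\BT$ to each \emph{edge}, takes as tetrahedral weight the meromorphic function $\psi^0(z,w)=c(q)\,G_q(-qz)G_q(w^{-1})G_q(wz^{-1})$ in which the entire $\BZ$-sum of the $\BT\times\BZ$ quantum dilogarithm has already been resummed into a product of Pochhammer symbols, and integrates only over the compact torus $\BT^N$. The difference is not cosmetic, and two steps of your plan have genuine gaps. First, your convergence argument rests on ``the Gaussian character contributes a genuine factor $|q|^{cn^2}$ with $c>0$.'' For the group $\BT\times\BZ$ the gaussian exponential is $\langle z,m\rangle=z^m$, which is unimodular on $\BT\times\BZ$; it provides no decay whatsoever in the discrete direction. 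The actual decay in the paper comes from the strictly positive pre-angle structure, which shifts the arguments of $\psi(z,m)$ off the unit circle by factors $(-q)^{-\alpha/\pi}$ and yields only exponential (angle-dependent) decay; and absolute convergence of the resulting multi-dimensional lattice sum is exactly the 1-efficiency obstruction that makes the naive 3D index fail for some triangulations. Your construction, as described, would inherit that obstruction rather than remove it.

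Second, you propose to ``integrate over the subgroup cut out by the balancing equations,'' i.e.\ to impose balancing at the level of the integrand from the start. The central analytic difficulty of the theorem is precisely that the balanced integrand can have singularities \emph{on} the natural contour: the paper first defines the integral for strictly positive, \emph{unbalanced} pre-angle structures (where the integrand is analytic on $\BT^N$ and convergence is automatic), extends meromorphically in the angles, and only then passes to the balanced locus $\ve=0$ by deforming contours, picking up residue contributions that may be finite or infinite --- this is how the target $\BC\cup\{\infty\}$ and the shifted $q$-rays of part (c) actually arise (see the paper's Remark on the pinched contour and Lemma on pinching singularities). Your proposal never confronts this limit, and without it neither the meromorphy in $(\emu,\elambda)$ nor the finiteness of the set of singular rays is established. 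The pentagon step is closer in spirit to the paper (which, however, derives the non-constant beta pentagon identity by Fourier-transforming the known $q$-series pentagon for the tetrahedron index rather than proving an operator pentagon from scratch), but to make it usable for the 2--3 invariance you also need the identity in its angle-decorated, analytically continued form, compatible with the unbalanced regularization just described.
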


\begin{theorem}
\label{thm.2}
When $\calT$ has a strict angle structure, we have a Laurent series
expansion
\be
\label{eq.3d3d}
I_{\calT,\emu,\elambda}(q) = \sum_{(m, e) \in \BZ^2} \emu^m \elambda^e 
I_{\calT}(m,e)(q^2)
\ee
convergent on the unit torus $|\emu|=|\elambda|=1$, where $I_{\calT}(m,e)$
is the 3D-index of~\cite{DGG1}.
\end{theorem}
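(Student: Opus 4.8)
The goal is to show that the meromorphic invariant $I_{\calT,\emu,\elambda}(q)$ built from the balanced state-integral (Theorem 1.1(b)) admits, in the presence of a strict angle structure, a Laurent expansion in $(\emu,\elambda)$ whose coefficients are precisely the DGG 3D-index series $I_{\calT}(m,e)(q^2)$. My plan is to compare the two constructions at the level of the locally compact abelian group $\BT\times\BZ$: the state-integral is an integral over (a quotient of) $(\BT\times\BZ)^N$ of a product of "tetrahedral" building blocks, and the $\BZ$-part of that group is exactly where the $q$-series live while the $\BT$-part is where the Fourier/Laurent expansion happens. Concretely, I would first recall the factorization of the quantum-dilogarithm building block over $\BT\times\BZ$ — the tetrahedral weight should split as (or have as its $\BT\times\BZ$ analogue) a function whose $\BT$-Fourier coefficients are, up to normalization, the tetrahedral index $I_{\Delta}(m,e)(q^2)$ of DGG. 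This is the representation-theoretic heart: the self-dual LCA group $\BT\times\BZ$ has Pontryagin dual $\BZ\times\BT$, and integrating out the $\BT$-coordinates of the $N$ tetrahedra against characters $\emu^m\elambda^e$ turns the continuous integral into a discrete sum over a lattice, which is exactly the lattice sum defining the DGG index.

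Second, I would set up the strict-angle-structure hypothesis as the convergence input. A strict angle structure guarantees positivity/decay properties (the shape parameters lie in the right domain) that make both the state-integral absolutely convergent on the unit torus $|\emu|=|\elambda|=1$ and the DGG lattice sum absolutely convergent; this is the standard role angle structures play (cf.\ the 1-efficiency/suitability discussion in the introduction), and it is what lets me interchange the integral over $\BT^N$ with the sum over $\BZ^N$. So the argument is: start from the balanced state-integral expression for $I_{\calT,\emu,\elambda}(q)$ from Theorem 1.1(b); insert the $\BT$-Fourier expansion of each tetrahedral weight; use absolute convergence (from the strict angle structure) to swap $\int_{\BT^N}$ and $\sum$; perform the now-Gaussian-type sum over the internal lattice coordinates constrained by the Neumann--Zagier gluing data; and identify the outcome as $\sum_{(m,e)}\emu^m\elambda^e$ times a lattice sum of products of tetrahedral indices — i.e.\ exactly the DGG formula for $I_{\calT}(m,e)(q^2)$, including the $q\mapsto q^2$ reparametrization which I expect to come out of the normalization $(-q)^z=e^{zh}$ and the doubling inherent in the $\BT\times\BZ$ quantum dilogarithm.

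The main obstacle, and the step I would spend the most care on, is matching normalizations and the gluing-constraint bookkeeping so that the lattice sum I obtain is *verbatim* the DGG index rather than merely "a" $q$-series invariant. This means: (i) pinning down the exact $\BT\times\BZ$ tetrahedral weight and computing its Fourier coefficients to see the factor $q^2$ and any sign/shift conventions appear correctly; (ii) checking that the combinatorial substitution of the peripheral curves (meridian/longitude) and the edge-gluing equations, encoded in the Neumann--Zagier matrices, produces exactly the charge/flux constraints in the DGG sum — this is where the "balanced" nature of the state-integral and the choice of peripheral basis must be reconciled with DGG's conventions; and (iii) justifying the interchange of sum and integral rigorously, i.e.\ an explicit tail estimate using the strict angle structure, so that the convergence claim on $|\emu|=|\elambda|=1$ is not merely formal. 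A secondary technical point is that the state-integral is defined on a quotient of $(\BT\times\BZ)^N$ (to make it balanced/well-defined), so I must track how that quotient interacts with Fourier duality — the quotient on the $\BT$-side should dualize to the sublattice condition on the $\BZ$-side, which is again precisely the gluing-equation constraint in DGG's sum.
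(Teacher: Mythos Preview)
Your plan matches the paper's proof: expand each tetrahedral weight $\psi^0$ in its double Laurent series (coefficients $I^\Delta(m,e)(q)=(-q)^e I_\Delta(m,e)(q^2)$), integrate term by term over $\BT^N$ (the strict angle structure puts the arguments in the domain of absolute convergence, justifying the interchange), and identify the resulting constrained lattice sum with the DGG formula.

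Two small corrections and one missing tool. First, the state-integral is over $\BT^N$, not over (a quotient of) $(\BT\times\BZ)^N$; the $\BZ$-summations appear only \emph{after} you Fourier-expand each $\psi^0$, and there is no ``Gaussian-type sum'' --- integrating over $\BT^N$ simply imposes the linear constraint $(A-B)s-Br=0$ on the Laurent exponents $(r,s)\in\BZ^{2N}$. Second, the step you correctly flag as the crux --- matching the constrained sum to DGG's sum over $k\in\BZ^{N-1}$ --- requires a concrete idea you have not named: the Neumann--Zagier matrix $(A'\mid B')$ (edge rows with one replaced by the meridian/longitude rows) is the top half of a symplectic integer matrix, and this forces every solution of $A's+B'(-r-s)=\begin{pmatrix}0\\ \nu\end{pmatrix}$ to have the form $s=-B'^T k'$, $-r-s=A'^T k'$ for some $k'=\begin{pmatrix}\nu\\ k\end{pmatrix}\in\BZ^{N+1}$. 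After this parametrization, the $\BZ/2$ and $\BZ/3$ symmetries of $I^\Delta$ convert each factor $I^\Delta(r_i,s_i)$ into $I^\Delta(-b_i\cdot k',\,a_i\cdot k')$, which is exactly the DGG summand.
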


Fix a 3-manifold $M$ as above and consider the set $\calS_M$ of all ideal 
triangulations of $M$ that admit a strict angle structure. 

\begin{corollary}
\label{cor.constant}
Although it is not known yet if $\calS_M$ is connected or not by 2--3 Pachner moves, the
3D-index of~\cite{DGG1} is constant on $\calS_M$. 
\end{corollary}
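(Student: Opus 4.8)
The plan is to derive Corollary~\ref{cor.constant} as a formal consequence of Theorems~\ref{thm.1} and~\ref{thm.2}, with essentially no new analytic input. First I would observe that Theorem~\ref{thm.2} supplies, for every $\calT \in \calS_M$, a Laurent expansion of the \emph{single} meromorphic function $I_{\calT,\emu,\elambda}(q)$ on the unit torus $|\emu|=|\elambda|=1$ whose coefficients are exactly the 3D-index series $I_{\calT}(m,e)(q^2)$. Since the expansion converges on an open neighborhood of points of the unit torus (the function being meromorphic, hence analytic off a thin singular set, and the unit torus meeting at most finitely many shifted $q$-rays), the coefficients $I_{\calT}(m,e)(q^2)$ are \emph{uniquely determined} by the germ of $I_{\calT,\emu,\elambda}(q)$: they are recovered as Fourier--Laurent coefficients, $I_{\calT}(m,e)(q^2) = (2\pi i)^{-2}\oint\oint \emu^{-m}\elambda^{-e}\, I_{\calT,\emu,\elambda}(q)\, \tfrac{d\emu}{\emu}\tfrac{d\elambda}{\elambda}$, over small tori avoiding the singular rays.

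Next I would use Theorem~\ref{thm.1}(a): the function $\calT \mapsto I_{\calT}(q)$ is constant on any 2--3 equivalence class of ideal triangulations. Therefore, if $\calT$ and $\calT'$ both lie in $\calS_M$ and are related by a sequence of 2--3 Pachner moves, then $I_{\calT,\emu,\elambda}(q) = I_{\calT',\emu,\elambda}(q)$ as meromorphic functions, and hence by the uniqueness of the Laurent coefficients just noted, $I_{\calT}(m,e)(q^2) = I_{\calT'}(m,e)(q^2)$ for all $(m,e)\in\BZ^2$, i.e.\ the 3D-index agrees on $\calT$ and $\calT'$.

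Finally, I would address the clause ``although it is not known yet if $\calS_M$ is connected.'' The point of the corollary is precisely that one does \emph{not} need to know $\calS_M$ is connected under 2--3 moves: the meromorphic function $I_{\calT}(q)$ is a 2--3 invariant defined on \emph{all} ideal triangulations of $M$, not merely those in $\calS_M$. Since any two ideal triangulations of $M$ \emph{are} connected by 2--3 and 3--2 moves (by Matveev--Piergallini, on a fixed manifold with ideal triangulations one has such connectivity — this is the standard fact underlying topological invariance), the value $I_{\calT}(q)$ is the same meromorphic function $I_M(q)$ for every ideal triangulation $\calT$ of $M$, in particular for every $\calT\in\calS_M$. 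Restricting this single function to the unit torus and reading off its Laurent coefficients via Theorem~\ref{thm.2} then yields that $I_{\calT}(m,e)(q^2)$ is independent of $\calT\in\calS_M$, which is the assertion.

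I do not expect a serious obstacle here: the one point requiring a little care is the interchange of ``equality of meromorphic functions'' with ``equality of 3D-index coefficients,'' which rests on the legitimacy of extracting Fourier coefficients on the unit torus — this is fine because, by Theorem~\ref{thm.1}(c), the singular set is a finite union of shifted $q$-rays, so for generic radii near $1$ the integration tori avoid it and the expansion~\eqref{eq.3d3d} is valid. One should also remark, for completeness, that the 3D-index is a priori only known to be defined when $\calT$ is 1-efficient (equivalently admits a strict angle structure), which is exactly the hypothesis $\calT\in\calS_M$; outside $\calS_M$ the series need not converge or even make sense, but that is irrelevant since Theorem~\ref{thm.2} is only invoked on $\calS_M$ while Theorem~\ref{thm.1} operates on all triangulations.
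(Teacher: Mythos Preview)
Your argument is correct and is exactly the approach the paper intends: the corollary is stated without proof, as an immediate consequence of Theorem~\ref{thm.1}(a) (invariance under 2--3 moves, hence topological invariance via Matveev--Piergallini connectivity of all ideal triangulations) together with Theorem~\ref{thm.2} (the Laurent coefficients on the unit torus recover the 3D-index for $\calT\in\calS_M$, and such coefficients are uniquely determined by the function). One small remark: your parenthetical ``1-efficient (equivalently admits a strict angle structure)'' overstates the known equivalence, but this is incidental and plays no role in the argument.
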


\subsection{Discussion}
\label{sub.goal}

In a series of papers~\cite{AK:TQFT,AK:new,AK:complex,Kashaev:YB}, 
 topological invariants of 
(ideally triangulated) 3-manifolds have been constructed from certain self-dual LCA groups 
equipped with quantum dilogarithm functions. The main idea of those 
constructions is the following. Fixing 
a self-dual LCA group with a gaussian exponential and a quantum dilogarithm function, one 
assigns a state-integral invariant to an ideal 
triangulation decorated by a pre-angle structure (in the cited papers 
this is called \emph{shape structure}), that
is a choice of a strict angle structure within each ideal tetrahedron, but 
the angles do not have to add up to $2 \pi$ around the edges edges of the
ideal triangulation. The 
resulting state-integral is often the germ of a meromorphic function on the 
(affine vector) space of real-valued pre-angle structures.
This affine space has an (affine vector) subspace of complex-valued  
angle structures (the pre-angle structures that add up to $2\pi$ around each 
geometric edge of the triangulation).
The above meromorphic function is either infinity or restricts to a meromorphic function on the 
space of complex-valued angle structures. When the latter happens, the 
state-integral depends only on the peripheral angle monodromy. This way we 
obtain an invariant of ideal triangulations which depends on the peripheral
angle monodromy.

The above construction is general and, in particular, it applies to the invariants constructed in 
\cite{AK:TQFT,AK:complex,AK:new} and \cite{KLV}.
Our goal is to give a self-contained presentation in the case of the self-dual 
LCA
group $\BT \times \BZ$ with a quantum dilogarithm first found by Woronowicz 
in \cite{Woro:equalities} and to relate this invariant to the 3D-index of
Dimofte--Gaiotto--Gukov~\cite{DGG2}.


\section{Bulding blocks}
\label{sec.formulas}

\subsection{The tetrahedral weight}
\label{sub.tet.weight}

In this section we define the tetrahedral weight which is the building
block of our state-integral. We give a self-contained treatment
of the symmetries and identities that it satisfies.

Below, we will often consider expansions of meromorphic functions
defined on open annuli or punctured disks, examples of which
are given in Equations~\eqref{eq.psiID}, \eqref{eq.psi0}, \eqref{eq.psi0me}. 
These Laurent expansions (not to be confused with the formal Laurent
series which involve only finitely many negative powers and arbitrarily many
positive powers) are well-known in complex analysis and their existence, 
convergence and manipulation follows from Cauchy's theorem.  A detailed
discussion of this can be found, for example, in~\cite{Ahlfors}. 

As a warm up, recall the Pochhammer symbol 
\begin{equation}
\poc{x;q}{m}:=\prod_{i=0}^{m-1} (1-q^i x),\quad m\in\BN\cup\{\infty\},
\end{equation}
where $\BN:=\BZ_{\ge0}$ and we always assume that $|q|<1$. The next lemma 
summarizes the well-known properties of the 
Pochhammer symbol $\poc{x;q}{\infty}$.

\begin{lemma}
\label{lem.xqinfty}
The Pochhammer symbol $(x;q)_\infty$ has the following properties.

\noindent
\rm{(a)} It is an entire function of $x$ with simple zeros $x\in q^{-\BN}$.
\newline
\rm{(b)} It satisfies the $q$-difference equation
\be
\label{eq.xqinfty}
(x;q)_\infty =(1-x)(qx;q)_\infty \,.
\ee
\noindent
\rm{(c)} It has convergent power series expansions
\begin{subequations}
\begin{align}
\label{eq.poc1}
\frac{1}{\poc{x;q}{\infty}} &= \sum_{n=0}^\infty 
\frac{x^n}{\poc{q;q}{n}}, \quad |x|<1
\\
\label{eq.poc2}
\poc{x;q}{\infty} &= \sum_{n=0}^\infty 
(-1)^n \frac{q^{\frac{1}{2}n(n-1)} x^n}{\poc{q;q}{n}},\quad \forall x\in \BC.
\end{align}
\end{subequations}
 \end{lemma}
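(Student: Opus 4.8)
The plan is to handle the three parts in order, each reducing to elementary complex analysis together with the defining infinite product and the $q$-difference equation.

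For part (a), I would first note that since $|q|<1$ the series $\sum_{i\ge0}|q^i x|$ converges uniformly on compact subsets of $\BC$, so the infinite product $\prod_{i\ge0}(1-q^i x)$ converges locally uniformly and defines an entire function; this is the standard criterion for infinite products of holomorphic functions (see \cite{Ahlfors}). The zero set is then exactly the union of the zero sets of the factors, namely $\{q^{-i}:i\in\BN\}=q^{-\BN}$, and since the points $q^{-i}$ are pairwise distinct (again because $|q|<1$) exactly one factor vanishes at each such point, so every zero is simple. Part (b) is immediate: split off the $i=0$ factor and reindex, $(x;q)_\infty=(1-x)\prod_{i\ge1}(1-q^i x)=(1-x)\prod_{j\ge0}\bigl(1-q^j(qx)\bigr)=(1-x)(qx;q)_\infty$.

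For part (c) the idea is to convert the $q$-difference equation \eqref{eq.xqinfty} into a recursion for Taylor coefficients at $x=0$. For \eqref{eq.poc2}, write $(x;q)_\infty=\sum_{n\ge0}b_n x^n$, legitimate since the function is entire by part (a); comparing coefficients in $(x;q)_\infty=(1-x)(qx;q)_\infty$ gives $b_n(1-q^n)=-q^{n-1}b_{n-1}$, and with $b_0=1$ induction yields $b_n=(-1)^n q^{\frac12 n(n-1)}/\poc{q;q}{n}$; the resulting series converges for all $x\in\BC$ because of the super-exponential decay from the factor $q^{\frac12 n(n-1)}$, consistent with entireness. For \eqref{eq.poc1}, part (a) shows that $1/(x;q)_\infty$ is holomorphic on $|x|<1$ — the nearest singularity being reciprocal to the zero of $(x;q)_\infty$ at $x=1$ — hence has a Taylor expansion $\sum_{n\ge0}a_n x^n$ there; rewriting \eqref{eq.xqinfty} as $(1-x)/(x;q)_\infty=1/(qx;q)_\infty$ and matching coefficients gives $a_n(1-q^n)=a_{n-1}$, so $a_0=1$ forces $a_n=1/\poc{q;q}{n}$.

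I do not anticipate a genuine obstacle: the lemma is classical, and every step reduces to the convergence criterion for infinite products, the one-line manipulation in (b), and matching power-series coefficients through the $q$-difference equation. The only points needing a modicum of care are the justification that the Taylor expansions exist on the claimed domains (which is exactly where part (a) and the location $x=1$ of the nearest singularity enter) and that the coefficient recursions are solvable, i.e. $1-q^n\neq0$ for $n\ge1$, which holds since $|q|<1$.
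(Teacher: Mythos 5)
Your proof is correct, and it follows the standard route: the paper itself offers no argument for (a) and (b) and simply cites \cite[Prop.~2]{Zagier:dilog} for (c), where the expansions are derived exactly as you do, by turning the $q$-difference equation \eqref{eq.xqinfty} into a first-order recursion for the Taylor coefficients (solvable since $1-q^n\neq 0$ for $|q|<1$). Your justification of the domains of convergence via part (a) and the location of the nearest zero at $x=1$ is the right supporting detail.
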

For the proof of part (c), see for instance \cite[Prop.~2]{Zagier:dilog}.

Consider the function 
\be
\label{eq.Gq}
G_q(z)=\frac{\poc{-qz^{-1};q}{\infty}}{\poc{z;q}{\infty}} =
\frac{\Bigl(1+\frac{q}{z}\Bigr)
\left(1+\frac{q^2}{z}\right)\left(1+\frac{q^3}{z}\right) \dots}{
(1-z)(1-qz)(1-q^2z) \dots} \,,
\ee
The next lemma summarizes its properties.

\begin{lemma}
\label{lem.Gq} 
The function $G_q(z)$ defined in \eqref{eq.Gq} has the following properties.

\noindent
\rm{(a)} It is a meromorphic function of $z \in \BC^*:=\BC\setminus\{0\}$ 
with simple zeros and poles in $-q^{1+\BN}$ and $q^{-\BN}$ respectively, and 
with essential singularities at $z=0$ and $z=\infty$.
\newline
\rm{(b)}
It satisfies the $q$-difference equation
\be
\label{eq.Gq23}
G_q(qz)=(1-z)(1+z^{-1}) G_q(z) 
\ee
and the involution equation
\be
\label{eq.Gq2}
G_q(-qz)=\frac{1}{G_q(z^{-1})} \,.
\ee
\rm{(c)}
It has a convergent Laurent series expansion in the punctured unit disk 
$0<|z|<1$:
\be
\label{eq.GJ}
G_q(z) = \sum_{n \in \BZ} J(n)(q) z^n 
\ee
where
\be
\label{eq.jdef}
J(n)(q):= \sum_{k=(-n)_+}^\infty 
\frac{q^{\frac{k(k+1)}{2}}}{(q)_k (q)_{n+k}} , \quad (n)_+:=\max\{n,0\},
\ee
is a well-defined element of $\BZ[[q]]$, analytic in the disk $|q|<1$.
\end{lemma}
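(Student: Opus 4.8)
The plan is to establish the three parts of Lemma~\ref{lem.Gq} in order, leaning on Lemma~\ref{lem.xqinfty} throughout. For part (a), I would observe that $G_q(z)$ is a quotient of two entire functions of $z$ restricted to $\BC^*$: by Lemma~\ref{lem.xqinfty}(a), the numerator $\poc{-qz^{-1};q}{\infty}$, viewed as a function of the variable $w=z^{-1}$, is entire with simple zeros at $w \in -q^{1-\BN}$, i.e.\ at $-qz^{-1} \in q^{-\BN}$, which gives $z \in -q^{1+\BN}$; likewise the denominator $\poc{z;q}{\infty}$ is entire with simple zeros at $z \in q^{-\BN}$. Since the zero sets $-q^{1+\BN}$ and $q^{-\BN}$ are disjoint (one consists of negative powers of $q$ times $-q$, the other of non-positive powers), there is no cancellation, and away from $z=0$ the function is meromorphic with exactly the stated simple zeros and poles. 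The essential singularities at $z=0$ and $z=\infty$ follow because each infinite product individually has an essential singularity there (the zeros/poles accumulate at $0$ and $\infty$), and they do not cancel.

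For part (b), the $q$-difference equation is a direct consequence of \eqref{eq.xqinfty}. Replacing $z$ by $qz$ in the denominator and using $\poc{qz;q}{\infty} = \poc{z;q}{\infty}/(1-z)$, and replacing $z$ by $qz$ in the numerator's argument $-qz^{-1} \mapsto -z^{-1}$ so that $\poc{-z^{-1};q}{\infty} = (1+z^{-1})\poc{-qz^{-1};q}{\infty}$, one multiplies the two relations to get \eqref{eq.Gq23}. For the involution \eqref{eq.Gq2}, I would compute $G_q(-qz)$ by substituting into \eqref{eq.Gq}: the numerator becomes $\poc{z^{-1};q}{\infty}$ and the denominator becomes $\poc{-qz;q}{\infty}$, so $G_q(-qz) = \poc{z^{-1};q}{\infty}/\poc{-qz;q}{\infty}$, which is visibly the reciprocal of $G_q(z^{-1}) = \poc{-qz;q}{\infty}/\poc{z^{-1};q}{\infty}$.

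For part (c), on the punctured unit disk $0<|z|<1$ the denominator $\poc{z;q}{\infty}$ is nonvanishing (its zeros lie in $q^{-\BN}$, all of modulus $\ge 1$), so $G_q$ is holomorphic on that annulus and admits a convergent Laurent expansion $\sum_{n\in\BZ} J(n)(q) z^n$ by Cauchy's theorem. To identify the coefficients, I would multiply the two power series from Lemma~\ref{lem.xqinfty}(c): expand $1/\poc{z;q}{\infty} = \sum_{j\ge 0} z^j/\poc{q;q}{j}$ via \eqref{eq.poc1} (valid for $|z|<1$) and $\poc{-qz^{-1};q}{\infty} = \sum_{i\ge 0} (-1)^i q^{i(i-1)/2} (-qz^{-1})^i /\poc{q;q}{i} = \sum_{i\ge 0} q^{i(i+1)/2} z^{-i}/\poc{q;q}{i}$ via \eqref{eq.poc2} (valid for all $z\ne 0$, after simplifying the sign $(-1)^i(-1)^i=1$ and combining $q^{i(i-1)/2}q^i = q^{i(i+1)/2}$). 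The Cauchy product gives the coefficient of $z^n$ as $\sum_{i - j = -n} q^{i(i+1)/2}/(\poc{q;q}{i}\poc{q;q}{j})$, i.e.\ setting $k=i$ and $j = n+k$ with $k \ge \max\{0,-n\} = (-n)_+$ to keep both indices nonnegative, this is exactly \eqref{eq.jdef}. Finally, each term $q^{k(k+1)/2}/(\poc{q;q}{k}\poc{q;q}{n+k})$ lies in $\BZ[[q]]$ since $1/\poc{q;q}{m} \in \BZ[[q]]$, and the exponents $k(k+1)/2$ grow quadratically in $k$ so the sum converges $q$-adically to a well-defined element of $\BZ[[q]]$; its analyticity in $|q|<1$ follows because the Laurent coefficient $J(n)(q)$ is extracted by a contour integral of the holomorphic-in-$q$ function $G_q(z)$, or directly by comparing with the absolutely convergent double series. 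The main obstacle is the bookkeeping in the Cauchy product — getting the substitution $i\mapsto k$, $j\mapsto n+k$ and the lower limit $(-n)_+$ exactly right, and justifying that the product of the two series (one convergent only for $|z|<1$, one for all $z\ne0$) converges and rearranges legitimately on the annulus $0<|z|<1$, which is where I would invoke absolute convergence on compact subannuli.
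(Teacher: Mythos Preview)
Your proposal is correct and follows exactly the route the paper intends: parts (a) and (b) from the product form of the Pochhammer symbol and its $q$-difference relation~\eqref{eq.xqinfty}, and part (c) by multiplying the expansions~\eqref{eq.poc1}--\eqref{eq.poc2} and reading off the Cauchy coefficient. The paper's own justification is just the one-line remark that (a), (b) follow from the product expansion and (c) from~\eqref{eq.poc1}--\eqref{eq.poc2}, so your write-up simply fills in the bookkeeping (the substitution $k=i$, $j=n+k$ and the lower limit $(-n)_+$) that the paper leaves to the reader.
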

Parts (a) and (b) follow easily from the product expansion of the Pochhammer 
symbol and part (c) follows from~\eqref{eq.poc1}-\eqref{eq.poc2}.

The \emph{tetrahedral weight} is a function $\psi^0(z,w)$ defined
by:
\be
\label{eq.psi0def}
\psi^0(z,w) := c(q) \, G_q(-qz) G_q(w^{-1})G_q(w z^{-1}) 
\ee
where
\be
\label{eq.cq}
c(q):= \frac{\poc{q;q}{\infty}^2}{\poc{q^2;q^2}{\infty}} \,.
\ee
The properties of this function are summarized in the following lemma.

\begin{lemma}
\label{lem.psi0} 
The function $\psi^0(z,w)$ defined in \eqref{eq.psi0def} has the following 
properties.

\noindent
\rm{(a)} It is a meromorphic function of 
$(z,w) \in (\BC^*)^2$ with zeros in
\be
\label{eq.zeros-z-w}
z \in q^{\BN}, \quad \text{or} \quad
w^{-1} \in -q^{1+\BN}, \quad \text{or} \quad z^{-1}w \in -q^{1+\BN} 
\ee
and poles in
\be
\label{eq.sing-z-w}
z \in -q^{-1-\BN}, \quad \text{or} \quad
w \in q^\BN, \quad \text{or} \quad zw^{-1} \in q^\BN \,.
\ee
\rm{(b)} It satisfies the $q$-difference equations
\begin{subequations}
\begin{align}
\label{eq.psi01}
w^{-1}\psi^0(qz,w) -\psi^0(z,w) -q^{-1}z^{-1} \psi^0(z,w) &=0, \\
\label{eq.psi02}
w \psi^0(qz,w) -\psi^0(z,w) -q z \psi^0(z,w) &=0 \,.
\end{align}
\end{subequations}
\rm{(c)} It satisfies the $\BZ/2$ and $\BZ/3$-invariance equations
\begin{subequations}
\begin{align}
\label{eq.psi022}
\psi^0(z,w) &= \psi^0(-q^{-1} w^{-1}, -q^{-1} z^{-1}) 
\\
\label{eq.psi03}
\psi^0(z,w) &= \psi^0(-q^{-1} z^{-1} w, -q^{-1} z^{-1})
= \psi^0(-q^{-1} w^{-1}, z w^{-1}) \,.
\end{align}
\end{subequations}
\rm{(d)} 
In the domain
\begin{equation}
\label{eq.psi0R}
1 < |w| < |z| < |q|^{-1},
\end{equation}
we have the absolutely convergent expansion
\be
\label{eq.psi0J}
\psi^0(z,w) = c(q) 
\sum_{e,m  \in \BZ} z^e w^m 
\sum_{\substack{k_1,k_2,k_3\in\BZ\\ k_1-k_3=e;\ k_3-k_2=m}} (-q)^{k_1} J(k_1)(q) 
J(k_2)(q) J(k_3)(q) 
\ee
where the interior sum is a well-defined element of $\BZ[[q]]$, analytic in
the disk $|q|<1$.
\end{lemma}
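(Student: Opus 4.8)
The plan is to prove each of the four parts (a)--(d) by reducing them to the corresponding statements about $G_q$ established in Lemma~\ref{lem.Gq}, together with the product formula~\eqref{eq.Gq} for $G_q$ in terms of Pochhammer symbols. For part (a), I would simply substitute the three arguments $-qz$, $w^{-1}$, $wz^{-1}$ into the zero/pole description of $G_q$ from Lemma~\ref{lem.Gq}(a): the zeros of $G_q$ lie in $-q^{1+\BN}$ and the poles in $q^{-\BN}$, so $G_q(-qz)$ contributes zeros at $-qz \in -q^{1+\BN}$, i.e.\ $z \in q^{\BN}$, and poles at $-qz \in q^{-\BN}$, i.e.\ $z \in -q^{-1-\BN}$, and similarly for the other two factors; since $c(q)$ is a nonzero constant (for $|q|<1$ the Pochhammer symbols in~\eqref{eq.cq} are nonzero), this gives exactly~\eqref{eq.zeros-z-w} and~\eqref{eq.sing-z-w}, provided I note that the three factors have disjoint zero loci and disjoint pole loci so no cancellation occurs.

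For part (b), the $q$-difference equations~\eqref{eq.psi01}--\eqref{eq.psi02} should follow by applying the shift $z \mapsto qz$ to~\eqref{eq.psi0def} and using the $q$-difference equation~\eqref{eq.Gq23} for $G_q$ on the two factors that depend on $z$, namely $G_q(-qz)$ and $G_q(wz^{-1})$. Concretely, $G_q(-q\cdot qz) = G_q(q\cdot(-qz))$ picks up a factor $(1-(-qz))(1+(-qz)^{-1})$ by~\eqref{eq.Gq23}, and $G_q(w(qz)^{-1}) = G_q(q^{-1}\cdot wz^{-1})$ requires the inverse form of~\eqref{eq.Gq23}; multiplying these out and collecting terms in $z$ should produce the two stated linear relations. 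For part (c), the $\BZ/2$ and $\BZ/3$ symmetries~\eqref{eq.psi022}--\eqref{eq.psi03}, I would verify that the substitutions permute the three arguments $\{-qz,\ w^{-1},\ wz^{-1}\}$ up to the involution $G_q(u) \mapsto G_q(-q u^{-1}) = 1/G_q(u^{-1})$ from~\eqref{eq.Gq2}; the key book-keeping is that~\eqref{eq.Gq2} lets one trade a factor $G_q(u)$ for $1/G_q((-qu^{-1})^{-1})$, and the product of the three factors is symmetric enough that these trades reassemble into $\psi^0$ evaluated at the permuted point, with the constant $c(q)$ unchanged. I expect the main obstacle here to be the bookkeeping in (c): one must carefully track which factor maps to which (possibly inverted) factor under each substitution and confirm that the essential-singularity parts and any stray monomial prefactors from~\eqref{eq.Gq2} cancel exactly; this is where the $\BZ/3$-symmetry of the tetrahedron is really being used and an error in the exponents of $q$ would be easy to make.

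For part (d), I would first observe that~\eqref{eq.GJ}--\eqref{eq.jdef} gives the Laurent expansion $G_q(u) = \sum_n J(n)(q) u^n$ valid in the punctured unit disk $0<|u|<1$, so I need the three arguments to satisfy $0 < |{-qz}| < 1$, $0 < |w^{-1}| < 1$, $0 < |wz^{-1}| < 1$ simultaneously; the domain~\eqref{eq.psi0R}, $1 < |w| < |z| < |q|^{-1}$, is precisely what makes $|qz|<1$ (from $|z|<|q|^{-1}$), $|w^{-1}|<1$ (from $|w|>1$), and $|w/z|<1$ (from $|w|<|z|$), so all three expansions converge absolutely there. Then I would multiply the three absolutely convergent Laurent series: writing $G_q(-qz) = \sum_{k_1} J(k_1)(q)(-q)^{k_1} z^{k_1}$, $G_q(w^{-1}) = \sum_{k_2} J(k_2)(q) w^{-k_2}$, $G_q(wz^{-1}) = \sum_{k_3} J(k_3)(q) w^{k_3} z^{-k_3}$, the product has general term $(-q)^{k_1} J(k_1)(q) J(k_2)(q) J(k_3)(q)\, z^{k_1-k_3} w^{-k_2+k_3}$, so setting $e = k_1-k_3$ and $m = k_3-k_2$ gives exactly~\eqref{eq.psi0J}; absolute convergence (from Lemma~\ref{lem.Gq}(c) applied in the three annuli and Fubini/Cauchy product for Laurent series, cf.\ the remark in Section~\ref{sec.formulas} about expansions on annuli) justifies the rearrangement. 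Finally, that the inner sum $\sum_{k_1-k_3=e,\,k_3-k_2=m}(-q)^{k_1}J(k_1)J(k_2)J(k_3)$ lies in $\BZ[[q]]$ and is analytic for $|q|<1$ follows because each $J(n)(q)\in\BZ[[q]]$ by Lemma~\ref{lem.Gq}(c), and a $q$-adic valuation estimate (the minimal power of $q$ in $J(n)(q)$ grows at least quadratically in $n$, as is visible from~\eqref{eq.jdef}) shows that only finitely many triples $(k_1,k_2,k_3)$ contribute to each power of $q$, so the inner sum is a genuine power series with integer coefficients; its radius of convergence being $\ge 1$ then follows from the absolute convergence of~\eqref{eq.psi0J} on the open set~\eqref{eq.psi0R}.
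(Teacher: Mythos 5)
Your overall route is the paper's route: the paper's entire proof of this lemma is the one-line remark that the properties follow from the definition \eqref{eq.psi0def} and Lemma~\ref{lem.Gq}, and your parts (a) and (d) fill in the intended details correctly (the substitution of the three arguments into the zero/pole sets of $G_q$, the domain check for \eqref{eq.psi0R}, and the bookkeeping $e=k_1-k_3$, $m=k_3-k_2$). Part (c) is in fact easier than you anticipate: each of the three substitutions sends the unordered triple of arguments $\{-qz,\ w^{-1},\ wz^{-1}\}$ exactly to itself --- for instance $-q\cdot(-q^{-1}w^{-1})=w^{-1}$, $(-q^{-1}z^{-1})^{-1}=-qz$ and $(-q^{-1}z^{-1})\cdot(-q^{-1}w^{-1})^{-1}=wz^{-1}$ --- so the involution \eqref{eq.Gq2} is never needed and no stray monomial prefactors arise. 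One small inaccuracy in (d): the $q$-adic valuation of $J(n)(q)$ does \emph{not} grow for $n\ge0$ (there $J(n)(q)=1+O(q)$ by \eqref{eq.jdef}); it grows quadratically only as $n\to-\infty$. Finiteness of the inner sum of \eqref{eq.psi0J} at each power of $q$ therefore needs the quadratic growth in the direction $k_3\to-\infty$ and the explicit factor $(-q)^{k_1}$ in the direction $k_3\to+\infty$.

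The genuine gap is in part (b), exactly where you wrote that multiplying out ``should produce the two stated linear relations'' without computing. Equation \eqref{eq.Gq23} reads $G_q(qu)=(u^{-1}-u)G_q(u)$, so your method gives
\be
\frac{\psi^0(qz,w)}{\psi^0(z,w)}=\frac{qz-q^{-1}z^{-1}}{qw^{-1}z-q^{-1}wz^{-1}}
=\frac{w\,(q^2z^2-1)}{q^2z^2-w^2},
\ee
which agrees with \eqref{eq.psi01} (respectively \eqref{eq.psi02}) only on the curve $w^2=qz$; indeed the two printed equations are mutually inconsistent for a nonzero function of two independent variables, since together they force $w^2=qz$. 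The resolution is that the last term of each equation should carry a shifted second argument: summing \eqref{eq.psi1}--\eqref{eq.psi2} against $w^m$, the factor $q^{\mp m}$ in $\psi(z,m)$ becomes the substitution $w\mapsto q^{-1}w$, yielding $w^{-1}\psi^0(qz,w)-\psi^0(z,w)-q^{-1}z^{-1}\psi^0(z,q^{-1}w)=0$ and $w\,\psi^0(qz,w)-\psi^0(z,w)-qz\,\psi^0(z,q^{-1}w)=0$, and these \emph{are} consistent with the ratio displayed above. So you must either prove the corrected identities (your method does produce them once the $w$-shift is tracked) or interpret (b) as operator identities in the $q$-commuting sense of Appendix~\ref{sec.found}; as literally stated, the computation you outline does not close.
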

These properties follow from the definition of $\psi^0$ and the
properties of $G_q$ listed in Lemma~\ref{lem.Gq}.

\subsection{The quantum dilogarithm}
\label{sub.QDL}

In this sub-section we identify the tetrahedral weight function $\psi^0(z,w)$ 
with (the reciprocal of) the quantum dilogarithm function $\psi(z,m)$ 
on the self-dual LCA group $\BT \times \BZ$, given 
by~\cite[Eqn.97]{Kashaev:YB} 
\begin{align}
\label{eq.psi}
\psi(z,m) &= \frac{(-q^{1-m}/z;q^2)_\infty}{(-q^{1-m} z;q^2)_\infty} \,.
\end{align}
In the context of quantum groups, this function first appeared 
in~\cite{Woro:equalities}. In Appendix~\ref{sec.found}, we explain how 
formula~\eqref{eq.psi} fits the general definition of a quantum dilogarithm
over the LCA group $\BT \times \BZ$.

Lemma~\ref{lem.xqinfty} and the above definition imply the following
properties of the function $\psi(z,w)$.

\begin{lemma}
\label{lem.psizw} 
The function $\psi(z,m)$ defined in \eqref{eq.psi} has the following 
properties.

\noindent
\rm{(a)} It is a meromorphic function of $z$ with 
simple poles and zeros at $z\in-q^{-1-|m|-2\BN}$ and $z\in-q^{1+|m|+2\BN}$, 
respectively. 
\newline
\rm{(b)}
It is analytic in the annulus $0 < |z| < |q|^{-1-|m|}$ 
(which always includes the unit circle $|z|=1$) where it has an absolutely 
convergent Laurent series expansion
\be
\label{eq.psiID}
\psi(z,m) = \sum_{e \in \BZ} \IKD(m,e)(q) z^e
\ee
where 
\be
\label{eq.3d2}
\IKD(m,e)(q) = (-q)^e \ID(m,e)(q^2)
\ee
is related to the tetrahedron index $\ID$ of~\cite{DGG2} given by
\be
\label{eq.ID}
\ID(m,e)(q)=\sum_{n=(-e)_+}^\infty (-1)^n \frac{q^{\frac{1}{2}n(n+1)
-\left(n+\frac{1}{2}e\right)m}}{(q)_n(q)_{n+e}} \in \BZ[[q^{\frac{1}{2}}]]
\ee
and $(e)_+:=\max\{0,e\}$ and $(q)_n:=\poc{q;q}{n}=\prod_{i=1}^n (1-q^i)$. 
\end{lemma}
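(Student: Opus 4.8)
The plan is to derive all four parts directly from the definition~\eqref{eq.psi} together with Lemma~\ref{lem.xqinfty}. First, for part (a), I would read off the zeros and poles of $\psi(z,m)$ from the product formulas for the two $q^2$-Pochhammer symbols: by Lemma~\ref{lem.xqinfty}(a), the numerator $(-q^{1-m}/z;q^2)_\infty$ is entire in $1/z$ with simple zeros exactly when $-q^{1-m}/z \in (q^2)^{-\BN}$, i.e. $z \in -q^{1-m+2\BN}=-q^{1+|m|+2\BN}$ after splitting on the sign of $m$ (the two cases $m\ge 0$ and $m<0$ give the same set because replacing $m$ by $-m$ is absorbed by the symmetry visible in the formula); likewise the denominator $(-q^{1-m}z;q^2)_\infty$ is entire in $z$ with simple zeros when $z\in -q^{-1+m-2\BN}=-q^{-1-|m|-2\BN}$, which become the simple poles of $\psi$. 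Since numerator and denominator have disjoint zero sets, no cancellation occurs, and $\psi$ is meromorphic on $\BC^*$ with precisely the stated zeros and poles.

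For part (b), the location of the poles nearest to $z=0$ shows that $\psi(z,m)$ is analytic on the punctured disk $0<|z|<|q|^{-1-|m|}$; since $|q|<1$ this annulus contains $|z|=1$. On this annulus a Laurent expansion $\psi(z,m)=\sum_{e\in\BZ}\IKD(m,e)(q)z^e$ exists and converges absolutely by Cauchy's theorem. To identify the coefficients, I would expand numerator and denominator separately using Lemma~\ref{lem.xqinfty}(c): apply~\eqref{eq.poc2} to $(-q^{1-m}/z;q^2)_\infty$ and~\eqref{eq.poc1} to $1/(-q^{1-m}z;q^2)_\infty$, multiply the two power series, and collect the coefficient of $z^e$. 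This produces a sum over a single index (after one of the two summation indices is eliminated by the constraint fixing the power of $z$), and a direct comparison with~\eqref{eq.ID} — rewriting everything in terms of $q^2$ and tracking the power of $(-q)$ — yields $\IKD(m,e)(q)=(-q)^e\ID(m,e)(q^2)$, which is~\eqref{eq.3d2}. The fact that $\ID(m,e)(q)\in\BZ[[q^{1/2}]]$ (hence $\IKD(m,e)(q)\in\BZ[[q]]$ up to the overall $(-q)^e$) follows because $(q)_n\in 1+q\BZ[[q]]$ is invertible in $\BZ[[q]]$ and the exponents of $q$ appearing are bounded below.

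The main obstacle, though entirely bookkeeping in nature, is the reconciliation of conventions: the paper works with $\psi$ in the variable $q$ while the tetrahedron index $\ID$ of~\cite{DGG2} is naturally a series in $q^{1/2}$, and the substitution $q\mapsto q^2$ together with the prefactor $(-q)^e$ must be threaded through consistently so that the half-integer powers in~\eqref{eq.ID} become honest integer powers. Getting the sign $(-1)^n$ in~\eqref{eq.ID} to match the signs coming from~\eqref{eq.poc2} and the $(-q^{1-m})^n$ factors, and verifying that the lower limit $n=(-e)_+$ of the sum is forced by the vanishing of $1/(q)_{n+e}$ for $n+e<0$, are the two points I would check most carefully. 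Once these normalizations are pinned down, parts (a)--(c) are immediate, and there is nothing further to prove for a ``Lemma'' of this summarizing type.
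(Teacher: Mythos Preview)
Your overall approach matches the paper's, which simply notes that the lemma follows from Lemma~\ref{lem.xqinfty} and the definition of $\psi$. However, your execution of part~(a) contains an error.

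You assert that the numerator zeros $-q^{1-m+2\BN}$ equal $-q^{1+|m|+2\BN}$ and that numerator and denominator have disjoint zero sets. Both claims fail for $m>0$. For instance, when $m=2$ the numerator $(-q^{-1}/z;q^2)_\infty$ vanishes at $z=-q^{-1}$ and $z=-q$, neither of which lies in $-q^{3+2\BN}$; and these same two points are also zeros of the denominator $(-q^{-1}z;q^2)_\infty$. In general, for $m\ge1$ the sets $-q^{1-m+2\BN}$ (numerator zeros) and $-q^{m-1-2\BN}$ (denominator zeros) overlap at exactly the $m$ points $-q^{1-m},-q^{3-m},\dots,-q^{m-1}$, and it is precisely this cancellation that pushes the surviving zeros and poles out to $-q^{1+|m|+2\BN}$ and $-q^{-1-|m|-2\BN}$. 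For $m\le0$ your computation is already correct and no cancellation occurs. The vague appeal to ``symmetry visible in the formula'' does not repair this; there is no identity $\psi(z,m)=\psi(z,-m)$ (one has instead $\psi(z,m)=z^{-m}\psi(z,-m)$, which one could use to reduce to $m\le0$). Once you account for the cancellation, the rest of your argument for (a) and (b) goes through as written.
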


The next theorem connects the tetrahedral weight function $\psi^0(z,w)$ with 
the above function $\psi(z,m)$.

\begin{theorem}
\label{thm.psi0}
\rm{(a)} In the domain~\eqref{eq.psi0R} we have the identity
\begin{align}
\label{eq.psi0}
\psi^0(z,w) &= \sum_{m \in \BZ} \psi(z,m) w^m 
\end{align}
where the sum is absolutely convergent.

\rm{(b)} In the domain~\eqref{eq.psi0R} we have an absolutely convergent 
double Laurent series expansion
\be
\label{eq.psi0me}
\psi^0(z,w) = \sum_{e,m  \in \BZ} \IKD(m,e)(q) z^e w^m \,.
\ee
\end{theorem}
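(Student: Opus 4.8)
The plan is to deduce Theorem~\ref{thm.psi0} directly from the definitions by matching two Laurent expansions of the same meromorphic function. The starting point is the factored formula \eqref{eq.psi0def}, $\psi^0(z,w) = c(q)\, G_q(-qz) G_q(w^{-1}) G_q(w z^{-1})$, together with the Laurent expansion \eqref{eq.GJ} for $G_q$ valid in the punctured unit disk $0<|z|<1$. For part (a), I would first check that in the domain \eqref{eq.psi0R}, namely $1<|w|<|z|<|q|^{-1}$, each of the three arguments $-qz$, $w^{-1}$, $wz^{-1}$ lands in the punctured unit disk $0<|\cdot|<1$ where \eqref{eq.GJ} converges absolutely: indeed $|{-qz}| = |q||z| < 1$ since $|z|<|q|^{-1}$, while $|w^{-1}|<1$ since $|w|>1$, and $|wz^{-1}|<1$ since $|w|<|z|$. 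Then, substituting the three absolutely convergent series and rearranging (justified by absolute convergence, giving the "interior sum is a well-defined element of $\BZ[[q]]$" claim), the coefficient of $z^e w^m$ is exactly the stated triple sum over $k_1-k_3=e$, $k_3-k_2=m$, which is formula \eqref{eq.psi0J} of Lemma~\ref{lem.psi0}(d). Alternatively, and more efficiently for part (a) as stated, I would group the expansion differently: collect first in powers of $w$. Writing $G_q(-qz) = \sum_j J(j)(q)(-q)^j z^j$ and then observing that $G_q(w^{-1})G_q(wz^{-1})$, expanded as a double series in $w$ and $z$, produces a coefficient of $w^m$ that is itself a Laurent series in $z$; collecting everything, the coefficient of $w^m$ is a Laurent series in $z$ which I must then identify with $\psi(z,m)$ from \eqref{eq.psi}.

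The crux is therefore the identification of the $w^m$-coefficient of $\psi^0(z,w)$ with the quantum dilogarithm $\psi(z,m) = (-q^{1-m}/z;q^2)_\infty / (-q^{1-m}z;q^2)_\infty$. The cleanest route is not to compare series coefficient by coefficient but to give a closed-form product identity. I would define $f_m(z) := \oint \psi^0(z,w) w^{-m-1}\,dw/(2\pi i)$ over a circle $|w|=\rho$ with $1<\rho<|z|$, which by part (a) (once established formally) equals the $w^m$-coefficient, and then compute $f_m(z)$ by residue/contour manipulation using the product formula for $G_q$. Concretely, $G_q(w^{-1})G_q(wz^{-1})$ has, as a function of $w$, zeros and poles dictated by Lemma~\ref{lem.Gq}(a), and pushing the contour to pick up residues at the poles $w \in q^{-\BN}$ (from $G_q(w z^{-1})$, i.e. $wz^{-1} \in q^{-\BN}$, giving $w \in z q^{-\BN}$) and $w \in -q^{1+\BN} z$-type poles produces, after using \eqref{eq.xqinfty}, a ratio of Pochhammer symbols that telescopes into the right-hand side of \eqref{eq.psi}. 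The constant $c(q)$ from \eqref{eq.cq} is what makes the $q$-infinity products collapse correctly — I expect the $(q;q)_\infty^2/(q^2;q^2)_\infty$ to cancel against the infinite products appearing in the residue computation, leaving exactly the $(q^2;q^2)$-Pochhammer structure of $\psi(z,m)$. I should also cross-check against the $q$-difference equations: \eqref{eq.psi01}-\eqref{eq.psi02} for $\psi^0$ must, upon extracting the $w^m$-coefficient, reduce to the two-term $q^2$-difference recursions satisfied by $\psi(z,m)$ and $\psi(z,m\pm 1)$ coming from \eqref{eq.xqinfty}; matching these recursions plus one normalization (say the value or leading coefficient at a convenient point) pins down $f_m = \psi(\cdot,m)$ without the full residue bookkeeping.

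Part (b) is then essentially immediate: substitute \eqref{eq.psiID} of Lemma~\ref{lem.psizw}(b), namely $\psi(z,m) = \sum_{e\in\BZ} \IKD(m,e)(q) z^e$, valid and absolutely convergent in the annulus $0<|z|<|q|^{-1-|m|}$, into the right-hand side of \eqref{eq.psi0}. One must check that the resulting double series $\sum_{e,m} \IKD(m,e)(q) z^e w^m$ converges absolutely in the full domain \eqref{eq.psi0R}; this follows from Fubini together with the absolute convergence already established for \eqref{eq.psi0} in $w$ and for \eqref{eq.psiID} in $z$, noting that for fixed $(z,w)$ in \eqref{eq.psi0R} we have $|z|<|q|^{-1}\le|q|^{-1-|m|}$ for every $m$, so each inner $z$-series converges at that point, and the outer $w$-series converges by part (a). Collecting like powers $z^e w^m$ gives \eqref{eq.psi0me}. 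I would also remark that comparing \eqref{eq.psi0me} with \eqref{eq.psi0J} and using \eqref{eq.3d2} re-proves Lemma~\ref{lem.psi0}(d) and expresses the tetrahedron index of \cite{DGG2} as the stated triple convolution of the $J(n)(q)$'s, which is the combinatorial payoff.

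The main obstacle I anticipate is the closed-form evaluation of the $w^m$-coefficient in part (a): getting the infinite $q$-Pochhammer products to collapse correctly — in particular seeing precisely how $c(q)$ and the essential singularities of $G_q$ at $0$ and $\infty$ conspire so that the contour-integral over $w$ yields a convergent answer equal to \eqref{eq.psi} — requires care, and the residue series must be resummed (via \eqref{eq.poc1}-\eqref{eq.poc2}) into the quotient of $(q^2;q^2)$-Pochhammers. The $q$-difference-equation shortcut mitigates this, but one still has to verify that \eqref{eq.psi01}-\eqref{eq.psi02} genuinely descend to the correct pair of recursions for the coefficients and fix the normalization; everything else (domain checks, absolute convergence, Fubini) is routine bookkeeping.
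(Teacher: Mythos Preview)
Your plan for part~(b) matches the paper exactly: it simply combines \eqref{eq.psi0} with \eqref{eq.psiID}. For part~(a), however, you and the paper run in opposite directions. The paper starts from the \emph{sum} $\sum_m \psi(z,m)\,w^m$, evaluates it in closed form via Ramanujan's ${}_1\psi_1$ summation (quoting \cite[Eqn.~(98)]{Kashaev:YB}) to obtain an expression in Jacobi theta functions, and then applies a bespoke theta identity (Lemma~\ref{lem.dopsum}) together with the Jacobi triple product to collapse that expression into the product $c(q)\,G_q(-qz)G_q(w^{-1})G_q(wz^{-1})=\psi^0(z,w)$. You instead start from the \emph{product} and try to extract the $w^m$-coefficient, either by residues or by matching $q$-difference equations.

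Your $q$-difference shortcut is legitimate in principle and is in fact validated by the paper's own Lemma~\ref{lem.psi}, which asserts that equations \eqref{eq.psi1}--\eqref{eq.psi2} characterize $\psi(z,m)$ up to a $q$-dependent factor. What you would still owe is (i) a clean check that the $w^m$-coefficient of $\psi^0$ really satisfies \eqref{eq.psi1}--\eqref{eq.psi2} (note the $m$-dependence in the coefficient $q^{-m-1}$ there, which does not arise from a single equation like \eqref{eq.psi01} alone; you need to combine the shifts in $z$ and $w$ correctly), and (ii) the normalization, which is exactly where the constant $c(q)$ earns its keep and which is not a triviality. Your residue approach faces the obstacle you already flagged: $G_q$ has essential singularities at $0$ and $\infty$, so naive contour pushing is delicate, and resumming the residue series is essentially the ${}_1\psi_1$ identity in disguise. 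In short, your route is viable but the ``hard core'' you identify is of the same depth as the paper's use of ${}_1\psi_1$ plus Lemma~\ref{lem.dopsum}; the paper's approach has the advantage of reducing everything to named classical identities plus one explicitly proved theta relation.
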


\begin{proof}
We let RHS denote the sum in the right hand side of Equation~\eqref{eq.psi0}.
RHS is absolutely convergent in the domain~\eqref{eq.psi0R}
and it can be explicitly calculated by using 
Ramanujan's $_1\psi_1$-summation formula. The detailed computation
appears in~\cite[Eqn.~(98)]{Kashaev:YB} and the result reads
\be
\label{eq.psi^0}
\mathrm{RHS} =\frac{\poc{q^2;q^2}{\infty}\poc{z^{-2};q^2}{\infty}}{
\poc{w^{-2};q^2}{\infty}\poc{w^2z^{-2};q^2}{\infty}}
\left(\frac{\theta_q(zw^{-2})}{
\theta_q(z)}+w\frac{\theta_q(qw^2/z)}{\theta_q(q/z)}\right)
\ee
where 
\be
\label{eq.theta_q}
\theta_q(x):=\sum_{k\in\BZ}q^{k^2} x^k=
\poc{q^2;q^2}{\infty}\poc{-qx;q^2}{\infty}\poc{-q/x;q^2}{\infty}
\ee
is the Jacobi theta function, and the second equality in~\eqref{eq.theta_q} 
is the Jacobi triple product identity.
By using Lemma~\ref{lem.dopsum} (see below) and the Jacobi triple
product identity 
we can further simplify the right hand side of~\eqref{eq.psi^0}, thus
getting 

\begin{align*}
\mathrm{RHS} &=\frac{\poc{q;q}{\infty}^2\poc{z^{-1};q}{\infty}
\poc{-qw;q}{\infty}\poc{-qz/w;q}{\infty}}{\poc{q^2;q^2}{\infty}
\poc{-qz;q}{\infty}\poc{w^{-1};q}{\infty}\poc{w/z;q}{\infty}} \\
&= c(q) \, G_q(-qz) G_q(w^{-1})G_q(w z^{-1}) \\
&= \psi^0(z,w) \,.
\end{align*}
This concludes the proof of the part (a).
Part (b) follows from \eqref{eq.psi0} combined with \eqref{eq.psiID}.
\end{proof}

\begin{lemma}
\label{lem.dopsum}
For any choice of the square root $p:=\sqrt{q}$, we have the identity 
\be
\label{eq:dopsum}
\frac{\theta_q(zw^{-2})
}{\theta_q(z)}+w\frac{\theta_q(qw^2/z)}{\theta_q(q/z)}=
\frac{\poc{q;q}{\infty}}{\poc{q^2;q^2}{\infty}^2}\frac{\theta_{p}
\left(pw/z\right)\theta_{p}\left(w/p\right)}{\theta_{p}\left(z/p\right)}.
\ee
\end{lemma}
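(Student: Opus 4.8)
The plan is to prove the theta-function identity in Lemma~\ref{lem.dopsum} by recognizing both sides as meromorphic functions of $z$ (with $w$, $q$, $p$ regarded as parameters) and showing they have the same quasi-periodicity, the same poles with matching residues, and agree at one point. First I would clear denominators: multiplying through by $\theta_q(z)\theta_q(q/z)\theta_p(z/p)$, the identity becomes an equality of entire functions of $z\in\BC^*$ (after noting that $\theta_q(z/p^2)=\theta_q(qz^{-1}p^{-2}\cdot \text{const})$ type relations reconcile the zero sets; in fact $\theta_p(z/p)$ vanishes exactly on $p^{1+2\BZ}=q^{1/2}q^\BZ\cup\dots$, which splits into the zero loci of $\theta_q(z)$ and $\theta_q(q/z)$ after rescaling). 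Concretely, I expect the product $\theta_q(z)\theta_q(q/z)$ to differ from $\theta_p(z/p)\theta_p(p/z)$ (or a close variant) only by an elementary factor, via the classical duplication/Landen-type formula $\theta_{q^{1/2}}(x)\,\theta_{q^{1/2}}(q^{1/2}/x)=\text{const}\cdot\theta_q(x)\,\theta_q(-x)^{-1}\cdots$ — so step one is to pin down exactly which product identity among Jacobi theta functions is in play, using the triple product form given in~\eqref{eq.theta_q}.

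The key steps, in order: (1) Rewrite everything using the Jacobi triple product~\eqref{eq.theta_q} so that all theta functions become Pochhammer products; the left side of~\eqref{eq:dopsum} then becomes a sum of two ratios of infinite products, and the right side a single ratio times $\poc{q;q}{\infty}/\poc{q^2;q^2}{\infty}^2$. (2) Observe that the combination appearing on the left is precisely the bracketed factor in~\eqref{eq.psi^0}, so the cleanest route is to run the computation of Theorem~\ref{thm.psi0} in reverse: we already know from Ramanujan's ${}_1\psi_1$ that $\mathrm{RHS}$ of~\eqref{eq.psi0} equals~\eqref{eq.psi^0}, and independently $\psi^0(z,w)$ has the product form $c(q)G_q(-qz)G_q(w^{-1})G_q(wz^{-1})$; equating these two expressions for $\psi^0(z,w)$ and solving for the bracket yields exactly~\eqref{eq:dopsum} after the $\poc{\cdot;q^2}{\infty}$ factors are regrouped into $\poc{\cdot;q}{\infty}$ factors using $\poc{x;q^2}{\infty}\poc{qx;q^2}{\infty}=\poc{x;q}{\infty}$. (3) Alternatively, for a self-contained proof independent of $\psi^0$, verify directly that LHS and RHS of~\eqref{eq:dopsum}, as functions of $z$, satisfy the same first-order $q$-difference equation under $z\mapsto qz$ (each theta ratio picks up an explicit monomial factor; the two summands on the left must be checked to transform compatibly), have poles only at $z\in q^{\BZ}\cup q^{1+\BZ}$ with equal residues, and coincide at, say, $z=w^2$ where the left side collapses.

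I would present approach (2) as the main line since it reuses machinery already in the paper, relegating the elementary $q^2\to q$ Pochhammer regrouping to a one-line remark. The main obstacle is bookkeeping: matching the precise powers of $q$ and signs when converting between base-$q$ and base-$q^2$ products and between the $\theta_q$ and $\theta_p$ normalizations — a sign or a factor of $q^{1/2}$ is easy to misplace, and the identity must hold for \emph{any} choice of square root $p=\sqrt q$, so one must check that the right side is genuinely independent of that choice (the apparent $p$-dependence in $\theta_p(pw/z)\theta_p(w/p)/\theta_p(z/p)$ must cancel). Verifying that last invariance — either by a direct substitution $p\mapsto -p$ using $\theta_{-p}(x)=\theta_p(-x)$ and the triple product, or by noting the left side manifestly does not involve $p$ — is the step I would be most careful about. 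Everything else is routine manipulation of convergent infinite products valid for $|q|<1$.
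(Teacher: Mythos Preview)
Your main line, approach~(2), is circular. In the paper's logical structure, $\psi^0(z,w)$ is \emph{defined} by the product $c(q)\,G_q(-qz)G_q(w^{-1})G_q(wz^{-1})$ (equation~\eqref{eq.psi0def}), and the statement that the sum $\sum_{m}\psi(z,m)w^m$ equals this product is exactly Theorem~\ref{thm.psi0}(a). The proof of that theorem computes the sum via Ramanujan's ${}_1\psi_1$ to obtain~\eqref{eq.psi^0} and then \emph{invokes Lemma~\ref{lem.dopsum}} to collapse the bracketed theta combination into the product form. So ``equating the two expressions for $\psi^0$ and solving for the bracket'' is not an independent argument: it presupposes the very identity you are trying to prove. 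You would need a separate proof that the sum equals the product, and you do not supply one.

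Your fallback approach~(3) (match quasi-periodicity under $z\mapsto q^2 z$, match poles and residues, check one value) is a legitimate strategy and would work, but it is not what the paper does. The paper's proof is a direct, self-contained series computation: write the left side as $h(u,w)/g(u)$ with $u=z^{-1}$, where $g(u)=\theta_q(u)\theta_q(qu)$ and $h(u,w)=\theta_q(qu)\theta_q(uw^2)+w\,\theta_q(u)\theta_q(quw^2)$. The denominator is handled by the triple product, giving $g(u)=\frac{(q^2;q^2)_\infty^2}{(q;q)_\infty}\theta_p(pu)$. For the numerator, one expands $h(u/w,w)$ as a double sum $\sum_{k,l}q^{k^2+l^2}u^{k+l}w^{l-k}(q^k+q^lw)$, performs an index shift and an even/odd split in $k$, and resums in terms of $\theta_{q^2}$; the resulting expression \emph{factors} as $s(qu)s(w)$ with $s(w)=\theta_p(w/p)$. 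Hence $h(u,w)=\theta_p(puw)\theta_p(w/p)$, and combining with $g(u)$ yields~\eqref{eq:dopsum}. No elliptic-function uniqueness argument or residue calculation is used. Your concern about the choice of square root $p$ is addressed implicitly: the final formula arises from resumming integer-indexed series, and the computation of $s(w)$ shows the answer is the same for either sign of $p$.
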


\begin{proof}
Denoting the left hand side of \eqref{eq:dopsum} as $f(z^{-1},w)$, we have
$$
f(u,w)=h(u,w)/g(u) \,,
$$
where
$$
g(u):=\theta_q(u)\theta_q(qu),
\qquad h(u,w):=\theta_q(qu)\theta_q(uw^2)+w\theta_q(u)\theta_q(quw^2) \,.
$$
First, we rewrite $g(u)$ as follows:
\begin{multline}
g(u)=\poc{q^2;q^2}{\infty}^2\poc{-qu;q^2}{\infty}
\poc{-q^2u;q^2}{\infty}\poc{-q/u;q^2}{\infty}\poc{-1/u;q^2}{\infty}\\
=\poc{q^2;q^2}{\infty}^2\poc{-qu;q}{\infty}\poc{-1/u;q}{\infty}
=\frac{\poc{q^2;q^2}{\infty}^2}{\poc{q;q}{\infty}}\theta_p(pu).
\end{multline}
Then, we transform $h(u/w,w)$ as follows:
\begin{equation}
h\left(\frac uw,w\right)=\sum_{k,l\in\BZ}q^{k^2+l^2}u^{k+l}w^{l-k}(q^k+q^l w)
=\sum_{k,l\in\BZ}q^{k^2-2kl+2l^2}u^{k}w^{2l-k}(q^{k-l}+q^l w)
\end{equation}
where in the second equality we have shifted the summation variable 
$k\mapsto k-l$. Next, we write out separately the sum over even and odd $k$: 
\begin{multline}
h\left(\frac uw,w\right)=\sum_{k,l\in\BZ}q^{2k^2+2(l-k)^2}u^{2k}w^{2l-2k}
\left(q^{2k-l}+q^l w+q^{4k+1-2l}uw^{-1}(q^{2k+1-l}+q^l w)\right)\\
=\sum_{k,l\in\BZ}q^{2k^2+2l^2+k}u^{2k}w^{2l}
\left(q^{-l}+q^{l} w+q^{2k+1-2l}uw^{-1}(q^{1-l}+q^{l} w)\right)
\end{multline}
where, this time, in the second equality we have shifted the summation 
variable $l\mapsto l+k$. Now, we can absorb both summations by using the 
definition of the $\theta$-function:
\begin{multline}
h\left(\frac uw,w\right)\\=\theta_{q^2}(qu^2)\left(\theta_{q^2}(q^{-1}w^2)
+w\theta_{q^2}(qw^2)\right)
+qu\theta_{q^2}(q^3u^2)\left(qw^{-1} \theta_{q^2}(q^{-3}w^2) 
+\theta_{q^2}(q^{-1}w^2)\right).
\end{multline}
Finally, by using the functional equation
\begin{equation}\label{eq:fe-theta}
\theta_q(q^{2l}x)=q^{-l^2}x^{-l}\theta_q(x),\qquad \text{for all} \qquad l\in\BZ,
\end{equation}
with $q\mapsto q^2$, $x=w^2$ and $l=-1$ in the second term, we arrive to the 
following factorized formula
\begin{equation}
h\left(\frac uw,w\right)
=\left(\theta_{q^2}(qu^2)+qu\theta_{q^2}(q^3u^2)\right)
\left(\theta_{q^2}(q^{-1}w^2)+w\theta_{q^2}(qw^2)\right)=s(qu)s(w)
\end{equation}
where
\begin{multline}
s(w):=\theta_{q^2}(q^{-1}w^2)+w\theta_{q^2}(qw^2)
=\sum_{k\in\BZ}q^{2k^2}w^{2k}\left(q^{-k}+q^{k}w\right)\\
=\sum_{k\in\BZ}p^{4k^2}w^{2k}\left(p^{-2k}+p^{2k}w\right)
=\sum_{k\in\BZ}\left(p^{(2k)^2-2k}w^{2k}+p^{(2k+1)^2-2k-1}w^{2k+1}\right)\\
=\sum_{k\in\BZ}p^{k^2-k}w^{k}=\theta_p(w/p).
\end{multline}
Thus, we have obtained the equality
\begin{equation}
h(u,w)=\theta_p(puw)\theta_p(w/p),
\end{equation}
and formula~\eqref{eq:dopsum} follows straightforwardly.
\end{proof}
In summary, the tetrahedral weight $\psi^0(z,w)$ is given by two sum formulas
and a product formula, and the last of which implies the meromorphicity of the
function and the location of its zeros and poles:
\begin{align}
\psi^0(z,w)&= \sum_{m \in \BZ} \psi(z,m) w^m \\
&= \sum_{e,m  \in \BZ} \IKD(m,e)(q) z^e w^m \\
&= c(q) \, G_q(-qz) G_q(w^{-1})G_q(w z^{-1}) \,.
\end{align}

For completeness, the next lemma summarizes the $q$-difference equations
and the symmetries of $\psi(z,w)$.

\begin{lemma}
\label{lem.psi} The function $\psi(z,m)$ defined in \eqref{eq.psi} 
satisfies the following $q$-difference equations
\begin{subequations}
\begin{align}
\label{eq.psi1}
\psi(qz,m+1) -\psi(z,m) -q^{-m-1}z^{-1} \psi(z,m) &= 0 \\
\label{eq.psi2}
\psi(qz,m-1) -\psi(z,m) -q^{-m+1}z \psi(z,m) &= 0 \,.
\end{align}
\end{subequations}
The above equations characterize the meromorphic 
function $\psi(z,m)$ up to multiplication by a function of $q$, analytic
in the unit disk $|q|<1$.
\end{lemma}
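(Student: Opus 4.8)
The plan is to verify the two $q$-difference equations \eqref{eq.psi1} and \eqref{eq.psi2} directly from the product formula \eqref{eq.psi}, and then prove the uniqueness statement separately by a standard argument on the space of solutions of a first-order $q$-difference equation. For the first part, I would write $\psi(z,m) = (-q^{1-m}/z;q^2)_\infty / (-q^{1-m}z;q^2)_\infty$ and substitute $z \mapsto qz$, $m \mapsto m+1$, so that both the numerator and denominator shift: the argument of the numerator Pochhammer becomes $-q^{1-(m+1)}/(qz) = -q^{-1-m}/z = q^{-2}\cdot(-q^{1-m}/z)$, and the argument of the denominator becomes $-q^{1-(m+1)}(qz) = -q^{1-m}z$, which is unchanged. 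Using part (b) of Lemma~\ref{lem.xqinfty} twice in the form $(q^{-2}x;q^2)_\infty = (1-q^{-2}x)(x;q^2)_\infty$, one gets $(-q^{-1-m}/z;q^2)_\infty = (1+q^{-1-m}/z)(-q^{1-m}/z;q^2)_\infty$, hence $\psi(qz,m+1) = (1+q^{-m-1}z^{-1})\psi(z,m)$, which is exactly \eqref{eq.psi1}. Equation \eqref{eq.psi2} is entirely analogous: under $z \mapsto qz$, $m \mapsto m-1$ the numerator argument is unchanged while the denominator argument $-q^{1-(m-1)}(qz) = -q^{3-m}z = q^2 \cdot (-q^{1-m}z)$ picks up a factor via $(q^2 x;q^2)_\infty = (x;q^2)_\infty/(1-x)$, giving $\psi(qz,m-1) = (1+q^{-m+1}z)\psi(z,m)$.

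For the uniqueness claim, I would argue as follows. Suppose $\phi(z,m)$ is another meromorphic function (in $z$, for each $m \in \BZ$) satisfying \eqref{eq.psi1}–\eqref{eq.psi2}, and form the ratio $R(z,m) := \phi(z,m)/\psi(z,m)$, which makes sense as a meromorphic function since $\psi(z,m)$ is not identically zero. Dividing \eqref{eq.psi1} for $\phi$ by \eqref{eq.psi1} for $\psi$ — using that $\psi$ satisfies $\psi(qz,m+1) = (1+q^{-m-1}z^{-1})\psi(z,m)$ with the \emph{same} prefactor $(1+q^{-m-1}z^{-1})$ that appears for $\phi$ — one finds $R(qz,m+1) = R(z,m)$, and similarly from \eqref{eq.psi2} that $R(qz,m-1) = R(z,m)$. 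Adding and subtracting the index shifts, $R(qz,m+1) = R(qz,m-1)$ for all $z$, hence $R$ is independent of $m$; write $R(z,m) = \rho(z)$. Then $\rho(qz) = \rho(z)$, so $\rho$ is a meromorphic function on $\BC^*$ invariant under $z \mapsto qz$; equivalently it descends to a meromorphic function on the elliptic curve $\BC^*/q^{\BZ}$. This is where I expect the only real subtlety: to conclude that $\rho$ is in fact a function of $q$ alone (i.e.\ constant in $z$), I need the additional normalization that both $\psi$ and $\phi$ are analytic and nonvanishing on an annulus around $|z|=1$ — by Lemma~\ref{lem.psizw}(a),(b), $\psi(z,m)$ is analytic and zero-free on $0<|z|<|q|^{-1-|m|}$, which contains a fundamental annulus $|q| \le |z| \le 1$ for the $q$-action — so $\rho(z)$ is a holomorphic, zero-free, $q$-periodic function on $\BC^*$, and such a function is forced to be constant (a holomorphic nonvanishing function on the compact Riemann surface $\BC^*/q^{\BZ}$ is constant). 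The resulting constant $c = c(q)$ is then visibly analytic in $|q|<1$ because the coefficients of the Laurent expansions of $\psi$ and $\phi$ are. Thus $\phi(z,m) = c(q)\psi(z,m)$, proving the characterization.

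The main obstacle, as indicated, is the passage from ``$q$-periodic meromorphic'' to ``constant,'' which genuinely requires the analyticity/non-vanishing input from Lemma~\ref{lem.psizw} rather than the $q$-difference equations alone: without such a normalization the equations only determine $\psi$ up to multiplication by an arbitrary elliptic function, so the precise statement of the lemma (``up to multiplication by a function of $q$, analytic in $|q|<1$'') is what pins down the ambiguity. The verification of \eqref{eq.psi1}–\eqref{eq.psi2} themselves is routine bookkeeping with the shift relation \eqref{eq.xqinfty} and should occupy only a few lines.
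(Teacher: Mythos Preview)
Your verification of \eqref{eq.psi1}--\eqref{eq.psi2} is correct and is exactly the route the paper indicates (``from the definition of $\psi$ and equation~\eqref{eq.xqinfty}''); the paper offers no further details, so your explicit bookkeeping is a faithful expansion of the one-line proof. The paper also does not spell out the uniqueness argument, so your ratio-plus-elliptic-curve argument is a welcome elaboration rather than a different route.

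Two small slips in the uniqueness part are worth fixing. First, from $R(qz,m+1)=R(z,m)$ and $R(qz,m-1)=R(z,m)$ you only get $R(z,m+1)=R(z,m-1)$, i.e.\ period~$2$ in $m$, not independence of $m$; correspondingly the $z$-periodicity you obtain is $R(q^{2}z,m)=R(z,m)$, not $R(qz,m)=R(z,m)$. This is harmless---you then argue on the elliptic curve $\BC^{*}/q^{2\BZ}$ instead of $\BC^{*}/q^{\BZ}$ and conclude constancy of $R(\cdot,0)$, after which $R(\cdot,1)=R(q^{-1}\,\cdot\,,0)$ is the same constant. Second, $\psi(z,m)$ is \emph{not} zero-free on $0<|z|<|q|^{-1-|m|}$: by Lemma~\ref{lem.psizw}(a) it has simple zeros at $z\in -q^{1+|m|+2\BN}$. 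The correct statement is that $\psi(z,m)$ is holomorphic and nonvanishing on the annulus $|q|^{1+|m|}<|z|<|q|^{-1-|m|}$; for $m=0$ this is $|q|<|z|<|q|^{-1}$, which is precisely an (open) fundamental annulus for $z\mapsto q^{2}z$, so the compactness argument goes through once you assume the same regularity for $\phi$. Your identification of this normalization as the essential extra input is exactly right: without it the equations determine $\psi$ only up to an elliptic factor, and the lemma's phrasing should be read with that implicit hypothesis.
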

These  follow from the definition of $\psi(z,w)$
and equations~\eqref{eq.xqinfty}, \eqref{eq.psi0} and 
\eqref{eq.psi01}-\eqref{eq.psi02}. Alternatively, they can be derived from 
equation~\eqref{eq.psiID} and the symmetries of the tetrahedron index 
$\ID$ given in \cite[Thm.3.2]{Ga:3D}.

For completeness, the next lemma summarizes the symmetries of $\IKD(m,e)$.

\begin{lemma}
\label{lem.IKD}
For all integers $m$ and $e$ we have the $\BZ/2$ and 
$\BZ/3$-invariance equations:
\begin{subequations}
\begin{align}
\label{eq.IKD2}
\IKD(m,e)(q) &= (-q)^{e+m} \IKD(-e,-m)(q) \\
\label{eq.IKD3}
\IKD(m,e)(q) &= (-q)^{e} \IKD(-e-m,m)(q) = (-q)^{e+m} \IKD(e,-e-m)(q) \,. 
\end{align}
\end{subequations}
As a consequence, we have another $\BZ/2$-invariance equation
\be
\label{eq.IKD22}
\IKD(m,e)=\IKD(-m,m+e) \,.
\ee
\end{lemma}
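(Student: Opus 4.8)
The plan is to derive the three displayed identities directly from the defining series~\eqref{eq.ID} together with the relation~\eqref{eq.3d2}, reducing the two $\BZ/3$-equations in~\eqref{eq.IKD3} and the $\BZ/2$-equation~\eqref{eq.IKD2} to the known symmetries of the Dimofte--Gaiotto--Gukov tetrahedron index $\ID(m,e)$. Recall from \cite[Thm.~3.2]{Ga:3D} (or from the triality of the ideal tetrahedron) that $\ID(m,e)(q)$ is invariant under the $\BZ/3$-action generated by $(m,e)\mapsto(e,-e-m)$ and satisfies the $\BZ/2$-relation $\ID(m,e)=(-1)^e q^{-e/2}\,\ID(-e,-m)$ together with $\ID(m,e)=\ID(-m,m+e)$ coming from $m\mapsto -m$. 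Substituting $q\mapsto q^2$ in these and multiplying through by the appropriate power of $(-q)$ dictated by~\eqref{eq.3d2} should produce~\eqref{eq.IKD2}, \eqref{eq.IKD3} and~\eqref{eq.IKD22} after bookkeeping of the prefactors.

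Concretely, I would first record the half-integral symmetries of $\ID$. The cyclic symmetry $\ID(m,e)=\ID(e,-e-m)=\ID(-e-m,m)$ is the cleanest (no prefactor), and combining it with $\IKD(m,e)(q)=(-q)^e\ID(m,e)(q^2)$ gives, for instance,
\begin{equation*}
\IKD(m,e)(q)=(-q)^e\ID(m,e)(q^2)=(-q)^e\ID(-e-m,m)(q^2)=(-q)^{e}(-q)^{-m}\IKD(-e-m,m)(q)\cdot(-q)^{m},
\end{equation*}
so one must be careful: to match the exponent in~\eqref{eq.IKD3} one rewrites $(-q)^e=(-q)^e(-q)^{-m}\cdot(-q)^{m}$ and uses that $\IKD(-e-m,m)(q)=(-q)^m\ID(-e-m,m)(q^2)$, yielding $\IKD(m,e)=(-q)^{e-m}\IKD(-e-m,m)\cdot(-q)^{m}=(-q)^e\IKD(-e-m,m)$. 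The second equality in~\eqref{eq.IKD3} is then obtained by applying the first one twice (iterating the $\BZ/3$-action) and collecting powers. For the $\BZ/2$-relation~\eqref{eq.IKD2}, I would start from $\ID(m,e)(q)=(-1)^e q^{-e/2}\ID(-e,-m)(q)$; replacing $q$ by $q^2$ turns $q^{-e/2}$ into $q^{-e}$ and $(-1)^e$ into $(-1)^e$, giving $\ID(m,e)(q^2)=(-q)^{-e}\ID(-e,-m)(q^2)$, and then the prefactor conversion produces $\IKD(m,e)=(-q)^e(-q)^{-e}\ID(-e,-m)(q^2)=(-q)^{e}(-q)^{-e}(-q)^{m}\IKD(-e,-m)(q)\cdot(-q)^{-(-m)}$; tracking this gives exactly the factor $(-q)^{e+m}$ in~\eqref{eq.IKD2}. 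Finally, \eqref{eq.IKD22} follows formally by composing~\eqref{eq.IKD2} and~\eqref{eq.IKD3}: applying~\eqref{eq.IKD2} to $\IKD(m,e)$ and then~\eqref{eq.IKD3} (or vice versa) and simplifying $(-q)$-powers collapses to $\IKD(m,e)=\IKD(-m,m+e)$, which is the statement.

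An alternative, more self-contained route avoids citing \cite{Ga:3D} and instead uses the product formula for $\psi(z,m)$ from~\eqref{eq.psi}: the three symmetry relations correspond to the $\BZ/2\times\BZ/3$ symmetries of the function $\psi^0(z,w)$ recorded in Lemma~\ref{lem.psi0}(c), read off coefficient-by-coefficient through the expansion~\eqref{eq.psi0me}. For example, substituting $(z,w)\mapsto(-q^{-1}w^{-1},-q^{-1}z^{-1})$ in $\psi^0(z,w)=\sum_{e,m}\IKD(m,e)(q)z^ew^m$ and matching coefficients of $z^ew^m$ yields~\eqref{eq.IKD2} after expanding $(-q^{-1})$-powers, and similarly the two forms of~\eqref{eq.psi03} give the two equalities in~\eqref{eq.IKD3}. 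This is attractive because the needed symmetries of $\psi^0$ are already established earlier in the excerpt.

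The main obstacle I anticipate is purely bookkeeping: keeping the powers of $(-q)$ straight when passing between $\ID(-,-)(q^2)$ and $\IKD(-,-)(q)$, since the half-integral exponents in the symmetries of $\ID$ become integral after $q\mapsto q^2$ but the conversion factor $(-q)^e$ is \emph{not} symmetric under the substitutions $(m,e)\mapsto(-e,-m)$ etc. A clean way to manage this is to define an auxiliary normalized quantity, e.g. $\widehat I(m,e):=(-q)^{-e/2\cdot(\text{something})}\ldots$, or simply to verify each relation by plugging the series~\eqref{eq.ID} into both sides and re-indexing the summation variable $n$; the index shifts $n\mapsto n+e$ (for $m\mapsto -m$ type moves) and $n\mapsto n$ with a Gaussian-exponent rearrangement (for the $\BZ/3$ move) are elementary but must be done carefully. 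I expect no conceptual difficulty beyond this.
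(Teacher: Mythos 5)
Your second, ``alternative'' route --- reading off \eqref{eq.IKD2}--\eqref{eq.IKD22} coefficient-by-coefficient from the symmetries \eqref{eq.psi022}--\eqref{eq.psi03} of $\psi^0$ via the double Laurent expansion \eqref{eq.psi0me} --- is exactly the paper's primary justification, and your observation that \eqref{eq.IKD22} follows by composing \eqref{eq.IKD2} with the second equality of \eqref{eq.IKD3} is also correct. The problem is with the route you actually carry out. You have misremembered the symmetries of $\ID$ from \cite[Thm.~3.2]{Ga:3D}: you attach the half-integral prefactor to the wrong generator. The relation that is prefactor-free is the $\BZ/2$ one, $\ID(m,e)=\ID(-e,-m)$, while the cyclic $\BZ/3$ relation carries the prefactor, $\ID(m,e)=(-q^{1/2})^{-e}\ID(e,-e-m)=(-q^{1/2})^{m}\ID(-e-m,m)$ --- the opposite of what you assert. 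Your two claimed relations are in fact mutually inconsistent: combining ``$\ID(m,e)=\ID(-e-m,m)$'' with ``$\ID(0,1)=-q^{-1/2}\ID(-1,0)$'' would force $\ID(0,1)=0$, whereas $\ID(0,1)=1-q^2-2q^3+\cdots\neq 0$.

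Because the entire content of this lemma is the prefactor bookkeeping, this is not a cosmetic slip: your displayed computations only land on the stated identities by inserting unexplained compensating factors. In the chain for \eqref{eq.IKD3} you correctly reach $(-q)^{e-m}\IKD(-e-m,m)$ from your (incorrect) premise and then multiply by an extra $(-q)^{m}$ with no justification; with the correct symmetry the factor $(-q)^{m}$ arises from $\ID(m,e)(q^2)=(-q)^{m}\ID(-e-m,m)(q^2)$ and everything closes up cleanly, e.g.
\begin{equation*}
\IKD(m,e)(q)=(-q)^e\ID(m,e)(q^2)=(-q)^{e+m}\ID(-e-m,m)(q^2)=(-q)^{e}\IKD(-e-m,m)(q)\,.
\end{equation*}
Similarly, for \eqref{eq.IKD2} your tracked prefactor comes out to $(-q)^{2m}$ rather than $(-q)^{e+m}$; the correct derivation is the one-liner $\IKD(m,e)=(-q)^e\ID(m,e)(q^2)=(-q)^e\ID(-e,-m)(q^2)=(-q)^{e+m}\IKD(-e,-m)$. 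So the architecture of your argument matches the paper's, but the first route as written does not prove the lemma; you should either fix the input symmetries of $\ID$ as above or fall back on the $\psi^0$-symmetry route, which you describe correctly.
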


\noindent
These follow from equations~\eqref{eq.psi0me} 
and~\eqref{eq.psi022}-\eqref{eq.psi03}. Additionally, they follow from
equations~\eqref{eq.3d2} and the symmetries of the tetrahedron index
$\ID(m,e)$ given in \cite[Thm.3.2]{Ga:3D}.

\subsection{The pentagon identity and the Pachner 2--3 move}
\label{sub.pentagon}

Let us recall the pentagon identity for the tetrahedron index $\ID$
from~\cite[Thm.3.7]{Ga:3D}.
\be
\label{eq.pentagon.ID}
\ID(m_1-e_2,e_1) \ID(m_2-e_1,e_2) =\sum_{e_3 \in \BZ}
q^{e_3} \ID(m_1,e_1+e_3) \ID(m_2,e_2+e_3) \ID(m_1+m_2,e_3)
\ee
for all integers $m_1,m_2,e_1,e_2$. Replacing $q$ by $q^2$ 
in~\eqref{eq.pentagon.ID} and using equation~\eqref{eq.3d2}, it follows that 
the tetrahedron index $\IKD$ satisfies the equation
\begin{multline}
 \label{eq.pentagon1}
\IKD(m_1-e_2,e_1) \IKD(m_2-e_1,e_2) \\=\sum_{e_3 \in \BZ}
(-q)^{-e_3} \IKD(m_1,e_1+e_3) \IKD(m_2,e_2+e_3) \IKD(m_1+m_2,e_3) \,.
\end{multline}
To remove the factor $(-q)^{e_3}$ in the above equation, we apply
equation~\eqref{eq.IKD3} to the two terms of the left hand side and to the
last term in the right hand side, and obtain
\begin{multline*}
 \IKD(e_1,-e_1+e_2-m_1) \IKD(e_2,e_1-e_2-m_2) \\=\sum_{e_3 \in \BZ}
\IKD(m_1,e_1+e_3) \IKD(m_2,e_2+e_3) \IKD(e_3,-e_3-m_1-m_2) \,.
\end{multline*}
Setting 
$$
x=e_1, \quad y=-e_1+e_2-m_1, \quad u=e_2, \quad v=e_1-e_2-m_2,\quad e_3=-z,
$$
%
we rewrite the latter equality as
$$
\IKD(x,y) \IKD(u,v) =\sum_{z \in \BZ}
\IKD(u-x-y,x-z) \IKD(-z,z+v+y) \IKD(-u-v+x,u-z) \,.
$$
Applying equation~\eqref{eq.IKD22} to both functions on the left hand side 
of the above equation, and after a linear change of variables
we obtain
$$
\IKD(x,y) \IKD(u,v) =\sum_{z \in \BZ}
\IKD(-u-y,-x-z) \IKD(-z,z+x+y+u+v) \IKD(-x-v,-u-z) \,.
$$
Applying equation~\eqref{eq.IKD22} to the first and third terms of the
right hand side of the above equation, we obtain
$$
\IKD(x,y) \IKD(u,v) =\sum_{z \in \BZ}
\IKD(u+y,-x-y-u-z) \IKD(-z,z+x+y+u+v) \IKD(x+v,-x-u-v-z) \,.
$$
Finally, changing the summation variable $z\mapsto z-x-y-u-v$, we obtain
equation
\be
\label{eq.pentagon3}
\IKD(x,y) \IKD(u,v) =\sum_{z \in \BZ}
\IKD(u+y,v-z) \IKD(x+y+u+v-z,z) \IKD(x+v,y-z) \,.
\ee
which coincides with a special (constant) form of the beta pentagon 
equation~\cite[Eqn.(2)]{Kashaev:beta} for the LCA group $\BZ$.  Using the 
fact that the beta pentagon relation is stable under Fourier transformation, 
see~\cite[Sec.2]{Kashaev:beta}, we also conclude that the function
\be
\label{eq:phi-from-3D}
\phi(x,y):=\sum_{m,n\in\BZ}x^m y^{-n}\IKD(m,n)=\psi^0(x,1/y)
\ee
satisfies the constant beta pentagon equation for the circle LCA group 
$\mathbb{T}=\{z\in\BC\ \vert\ |z|=1\}$:
\be
\label{eq.pentagonK}
\phi(x,y) \phi(u,v) =\int_{\mathbb{T}}
\phi(uy,v/z) \phi(xyuv/z,z) \phi(xv,y/z)\frac{\operatorname{d}\! z}{2\pi i z}
\,.
\ee
Equivalently, the function $\psi^0(z,w)$, thought of as a distribution on 
$\BT^2$, satisfies the integral identity
\be
\label{eq.pentagonKpsi0}
\psi^0\left(x,y\right)
\psi^0\left(u,v\right) = \int_{\mathbb{T}}
\psi^0\left(\frac uy,\frac vz\right) 
\psi^0\left(\frac{xuz}{yv},z\right) 
\psi^0\left(\frac xv,\frac yz\right)
\frac{\operatorname{d}\! z}{2\pi i z}
\ee
where all variables belong to the unit circle $\mathbb{T}$. 
The distributional interpretation of   $\psi^0(z,w)$ means that its 
restriction to $\BT^2$ should be obtained by approaching $\BT^2$ from the 
domain~\eqref{eq.psi0R}, which is the domain of absolute convergence of 
the double series~\eqref{eq.psi0me}, i.e.,
\begin{equation}
\psi^0(z,w)=\psi^0_{+0,+0}(z,w):=\lim_{\substack{\alpha,\gamma\to 0\\
\alpha>0,\gamma>0,\alpha+\gamma<\pi}}\psi^0_{\alpha,\gamma}(z,w),\quad |z|=|w|=1,
\end{equation}
where
\begin{subequations}
\begin{align}
\label{eq.psi0ac1}
\psi^0_{\a,\g}(z,w) & := \psi^0\left(z (-q)^{-\frac{\a+\g}{\pi}},
w (-q)^{-\frac{\a}{\pi}}\right) \\ 
\label{eq.psi0ac2}
& 
= c(q) \,
G_q(z (-q)^{\frac{\b}{\pi}}) \, G_q(w^{-1} (-q)^{\frac{\a}{\pi}}) \, 
G_q(z^{-1}w (-q)^{\frac{\g}{\pi}})
\end{align}
\end{subequations}
where $\alpha+\beta+\gamma =\pi$.
The positive parameters $\alpha,\beta,\gamma$ satisfying the condition  
$\a+\b+\g=\pi$ (which we call pre-angle structure) can be identified with 
the dihedral angles of a positively oriented ideal hyperbolic tetrahedron. 
These angles are placed on the edges of a tetrahedron with ordered vertices 
according to Figure~\ref{f.angles}.

\begin{figure}[!htpb]
\begin{center}
\begin{tikzpicture}[scale=1.3,baseline=-3]
\draw (0,0) circle  (1);
\draw (90:1)--(0,0)--(-30:1) (0,0)--(-150:1);
\draw (90:.5) node[fill=white]{\small $\alpha$};
\draw (-90:1) node[fill=white]{\small $\alpha$};
\draw (-30:.5) node[fill=white]{\small $\beta$};
\draw (150:1) node[fill=white]{\small $\beta$};
\draw (-150:.5) node[fill=white]{\small $\gamma$};
\draw (30:1) node[fill=white]{\small $\gamma$};
\draw (90:0.1) node[right]{\small 0};
\draw (90:1.1) node[above]{\small 1};
\draw (-30:1.1) node[right]{\small 2};
\draw (-150:1.1) node[left]{\small 3};
\end{tikzpicture}
\caption{The angles of an ideal tetrahedron with ordered vertices.}
\label{f.angles}
\end{center}
\end{figure}
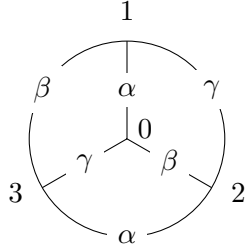


Moreover, the constant distributional beta pentagon 
identity~\eqref{eq.pentagonKpsi0} is a constant limit of the analytically 
continued non-constant identity

\be
\label{eq.pentagonKpsi0nc}
\psi^0_{\alpha_3,\gamma_3}\left(x,y\right) 
\psi^0_{\alpha_1,\gamma_1}\left(u,v\right) =\int_{\mathbb{T}}
\psi^0_{\alpha_0,\gamma_0}\left(\frac uy,\frac vz\right) 
\psi^0_{\alpha_2,\gamma_2}\left(\frac{xuz}{yv},z\right) 
\psi^0_{\alpha_4,\gamma_4}\left(\frac xv,\frac yz\right)
\frac{\operatorname{d}\! z}{2\pi i z}
\ee 
where $(\a_i,\b_i,\g_i)$ are pre-angle structures on five tetrahedra $T_i$ 
for $i=0,\dots,4$ which are compatible, i.e., satisfy the linear relations
\be
\label{eq.5angles}
\a_1=\a_0+\a_2, \quad \a_3=\a_2+\a_4, \quad \g_1=\g_0+\a_4, \quad 
\g_3=\a_0+\g_4, \quad \g_2=\g_1+\g_3 \,.
\ee
Notice that a compatible angle structure satisfies the balancing condition for
the interior edge $13$: $\b_0+\g_2+\b_4=2\pi$.
 Such an identity for Faddeev's quantum dilogarithm
appeared in~\cite[Prop.1]{AK:new}.


\begin{figure}[!hptb]
\begin{center}
\begin{tikzpicture}[scale=.7,baseline]
\draw[fill=gray!9] (2,0)--(0,2)--(-2,0)--(0,-2)--cycle;
\draw (0,2)--(.7,-.7)--(0,-2);
\draw (-2,0)--(.7,-.7)--(2,0);
\draw[dashed] (-2,0)--(2,0);
\draw (2,0) node[right]{$2$};
\draw (-2,0) node[left]{$0$};
\draw (0,-2) node[below]{$1$};
\draw (0,2) node[above]{$3$};
\draw (.7,-.7) node[above left]{$4$};
\end{tikzpicture}\quad
=\quad
\begin{tikzpicture}[scale=.7,baseline]
\draw[fill=gray!9] (2,0)--(0,2)--(-2,0)--(0,-2)--cycle;
\draw (0,2)--(.7,-.7)--(0,-2);
\draw (-2,0)--(.7,-.7)--(2,0);
\draw[dashed] (-2,0)--(2,0);
\draw[dotted] (0,2)--(0,-2);
\draw (2,0) node[right]{$2$};
\draw (-2,0) node[left]{$0$};
\draw (0,-2) node[below]{$1$};
\draw (0,2) node[above]{$3$};
\draw (.7,-.7) node[above left]{$4$};
\end{tikzpicture}
\end{center}
\caption{The ordered 2--3 Pachner move.}
\label{f.23move}
\end{figure}
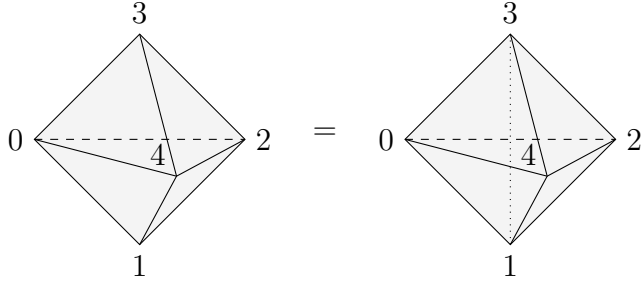


\section{The state-integral}
\label{sub:state-integral}

\subsection{Definition of the state-integral}
\label{sub.def}

Fix an ideal triangulation $\calT$ of an oriented 3-manifold with $N$ 
tetrahedra $T_i$ for $i=1,\dots, N$. The invariant is defined as follows:

\begin{itemize}
\item[(a)] 
Assign variables $x_i$ for $i=1,\dots,N$ to $N$ edges of $\calT$. 
\item[(b)] 
Choose a strictly positive pre-angle structure $\th=(\alpha,\b,\g)$ 
at each tetrahedron. Here, $\alpha$ is the angle of the $01$ and $23$ 
edges, $\b$ is the angle of the $02$ and $13$ edges, and $\g$ is 
the angle of the $03$ and $12$ edges. The angles are normalized so that at each
tetrahedron, their sum is $\pi$.
\item[(c)]
The weight of a tetrahedron $T$ is given by
\begin{subequations}
\begin{align}
\label{eq.B1}
B(T,x,\th) &= \psi^0\left(
(-q)^{-\frac{\a+\g}{\pi}}\frac{X_\a}{X_\g}, 
(-q)^{-\frac{\a}{\pi}} \frac{X_\b}{X_\g}  \right) \\
\label{eq.B2}
&= c(q) \, 
G_q\left((-q)^{\frac{\a}{\pi}} \frac{X_\g}{X_\b} \right)
G_q\left((-q)^{\frac{\b}{\pi}} \frac{X_\a}{X_\g} \right)
G_q\left((-q)^{\frac{\g}{\pi}} \frac{X_\b}{X_\a} \right)
\end{align}
\end{subequations}
where 
$$
X_\a := x_{01} x_{23}, \quad X_\b:=x_{02} x_{13}, \quad X_\g:=x_{03}x_{12},
$$
and $x_{ij}$ is the variable at edge $ij$ of the tetrahedron.
\item[(d)]
Define
\be
\label{eq.Ipre}
\Ipre_{\calT,\th}(q) = \int_{\BT^N} \prod_{i=1}^N B(T_i,x,\th) d\mu(x)
\ee
where $d\mu(x)=(2 \pi \im)^{-N}\prod_{i=1}^N dx_i/x_i$ is the normalized
Haar measure on $\BT^N$.
\end{itemize}

Recall the exponent matrices $(\Abar|\Bbar|\Cbar)$ of the edge gluing equations
of $\calT$~\cite{Th,NZ,snappy}. These are $N \times N$ matrices with integer 
entries which determine the gluing equations
\be
\label{eq.NZ}
\prod_{j=1}^N z_j^{\Abar_{i,j}} (z_j')^{\Bbar_{i,j}} 
(z_j'')^{\Cbar_{i,j}} =1, \qquad i=1,\dots,N
\ee
where $z'=1/(1-z)$ and $z''=1-1/z$, and as usual we have $z z' z'' =-1$.

For a vector $x=(x_1,\dots,x_N)$ of nonzero complex numbers, and a vector
$v=(v_1,\dots,v_N)$ of integers, define $x^v = \prod_{i=1}^N x_i^{v_i}$.
Also, for a matrix $A$, let $A_i$ denote its $i$th column. 

The next proposition implies that the integral~\eqref{eq.Ipre} 
(and even the integrand) depends on only
the Neumann--Zagier matrices of the gluing equations of the triangulation
$\calT$. 

\begin{proposition}
\label{prop.ABC}
With the above notation and for $i=1,\dots,N$, we have:
\be
\label{eq.BTABC}
B(T_i,x,\th) = c(q) \, 
G_q\left((-q)^{\frac{\a_i}{\pi}} x^{(\Cbar-\Bbar)_i} \right)
G_q\left((-q)^{\frac{\b_i}{\pi}} x^{(\Abar-\Cbar)_i} \right)
G_q\left((-q)^{\frac{\g_i}{\pi}} x^{(\Bbar-\Abar)_i} \right)
\ee
It follows that $I_{\calT,\th}(q)$ depends on only the matrices $\Abar$, $\Bbar$,
$\Cbar$ and $\th$. 
\end{proposition}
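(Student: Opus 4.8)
The plan is to unwind the definitions and match exponents edge-by-edge. Recall that formula~\eqref{eq.B2} expresses $B(T_i,x,\th)$ as a product of three $G_q$-factors whose arguments are monomials in the six edge variables $x_{ij}$ of the tetrahedron $T_i$, multiplied by powers of $(-q)$. The three monomials are $X_\g/X_\b = x_{03}x_{12}/(x_{02}x_{13})$, $X_\a/X_\g = x_{01}x_{23}/(x_{03}x_{12})$, and $X_\b/X_\a = x_{02}x_{13}/(x_{01}x_{23})$, which are precisely the cross-ratio-type monomials attached to the shapes $z$, $z'$, $z''$ of $T_i$ (up to the usual identifications $zz'z''=-1$). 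So the content of the proposition is just that after identifying edges of the tetrahedra via the gluing pattern of $\calT$, each of these three monomials in the global variables $x_1,\dots,x_N$ is exactly $x^{(\Cbar-\Bbar)_i}$, $x^{(\Abar-\Cbar)_i}$, $x^{(\Bbar-\Abar)_i}$ respectively.

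First I would recall the precise definition of the Neumann--Zagier matrices $\Abar$, $\Bbar$, $\Cbar$: the entry $\Abar_{i,j}$ counts (with sign/multiplicity) how many times the $z$-angle of tetrahedron $T_j$ appears around edge $e_i$ of $\calT$, and similarly $\Bbar_{i,j}$ for $z'$ and $\Cbar_{i,j}$ for $z''$. Equivalently, reading along the $j$-th column, the $i$-th entries record which of the three dihedral types of $T_j$ is glued to edge $e_i$ of $\calT$. This is exactly the bookkeeping that turns the per-tetrahedron variables $X_\a, X_\b, X_\g$ into monomials in the global edge variables once we substitute $x_{ij} \mapsto x_{e}$ where $e$ is the edge of $\calT$ that the edge $ij$ of $T$ maps to. The second step is then to observe that the shape $z$ of a tetrahedron sits on the pair of opposite edges carrying angle $\a$, the shape $z'$ on the pair carrying $\b$, the shape $z''$ on the pair carrying $\g$ — this is the standard Neumann--Zagier convention, and it is set up in item (c) of the definition, where $\a$ is declared the angle of the $01$ and $23$ edges, $\b$ of $02$ and $13$, $\g$ of $03$ and $12$. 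Hence $X_\a$ is the product of the two global edge variables at the $z$-edges of $T_i$, i.e. $x^{(\text{column of } \Abar \text{ at } i)}$ up to the fact that $X_\a$ involves a product of \emph{two} edges while the NZ matrix counts all incidences; but summing a monomial's exponents over all edges of $\calT$ is exactly what forming $x^{\Abar_i}$ does, so $X_\a = x^{\Abar_i}$, $X_\b = x^{\Bbar_i}$, $X_\g = x^{\Cbar_i}$ after the global substitution.

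Granting that dictionary, the three $G_q$-arguments become $(-q)^{\a_i/\pi} X_\g/X_\b = (-q)^{\a_i/\pi} x^{\Cbar_i - \Bbar_i} = (-q)^{\a_i/\pi} x^{(\Cbar-\Bbar)_i}$, and likewise for the other two, which is exactly~\eqref{eq.BTABC}. The final sentence of the proposition — that $I_{\calT,\th}(q)$ depends only on $\Abar, \Bbar, \Cbar, \th$ — is then immediate: by~\eqref{eq.BTABC} the integrand $\prod_i B(T_i,x,\th)$ is written purely in terms of these matrices and $\th$, and the measure $d\mu(x)$ on $\BT^N$ is canonical, so the integral in~\eqref{eq.Ipre} (which here is being called $I_{\calT,\th}$) depends on nothing else.

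The main obstacle is purely notational rather than conceptual: one has to be careful with the sign and parity conventions in the Neumann--Zagier matrices (different sources use $\Abar$ for the $z$-variable exponent versus its negative, and there is a standard shift making $\Abar+\Bbar+\Cbar$ a matrix with constant rows) and with the fact that an edge of $\calT$ may receive several edges of the same tetrahedron, so the ``exponent'' is a multiplicity and not just a $0/1$ incidence. Keeping the triple $(z,z',z'')\leftrightarrow(\a,\b,\g)$ matched to the $(01\!-\!23,\ 02\!-\!13,\ 03\!-\!12)$ edge-pairs as fixed in item (c), and checking that the substitution $x_{ij}\mapsto x_e$ is consistent around each edge (which is automatic, since both sides are defined by the same gluing), disposes of this. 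No genuinely hard estimate or new idea is needed; the proposition is a translation between two equivalent encodings of the same combinatorial data.
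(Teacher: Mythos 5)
Your proposal is correct and follows essentially the same route as the paper: the paper's proof likewise reduces \eqref{eq.BTABC} to the identities $\left(X_\g/X_\b\right)_i = x^{(\Cbar-\Bbar)e_i}$ (and cyclic analogues), which it justifies by exactly the observation you make, namely that $\Abar$, $\Bbar$, $\Cbar$ record the number of times the shapes $z_j$, $z_j'$, $z_j''$ appear around each edge. Your extra care about multiplicities and the $(z,z',z'')\leftrightarrow(\a,\b,\g)$ edge-pair dictionary is consistent with, and slightly more explicit than, the paper's one-line justification.
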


\begin{proof}
Let $e_i \in \BZ^N$ denote the $i$th coordinate vector (1 in position $i$
and 0 otherwise). It suffices to show that
\be
\label{eq.Xgb}
\left(\frac{X_\g}{X_\b}\right)_i = x^{(\Cbar-\Bbar)e_i},
\qquad
\left(\frac{X_\a}{X_\g}\right)_i = x^{(\Abar-\Cbar)e_i},
\qquad
\left(\frac{X_\b}{X_\a}\right)_i = x^{(\Bbar-\Abar)e_i} \,.
\ee
This follows from the fact that the matrices $\Abar, \Bbar$ and $\Cbar$
indexed by $\text{edges} \times \text{tetrahedra}$ record the number of 
times a shape $z_j$ (resp., $z_j'$, $z_j''$) of the $j$th tetrahedron 
appears around an edge $e_i$.
\end{proof}

Our choice of $\th$ and equation~\eqref{eq.psi0R} imply that the 
integrand in \eqref{eq.Ipre} is an analytic function of $x \in \BT^N$, 
therefore the integral 
converges. We are interested in two affine vector subspaces of $\BC^{3N}$:
\begin{itemize}
\item
$\calA_{\calT}$, the space of complexified pre-angle structures, i.e., 
$\th \in \BC^{3N}$ such that $\a_i+\b_i+\g_i=\pi$ for $i=1,\dots,N$.
\item 
$\calB_{\calT} \subset \calA_{\calT}$, the affine subspace of $\calA_{\calT}$ that
consists of balanced complexified pre-angle structures, that is
the sum of the angles around each edge of $\calT$ is $2\pi$. The points
of $\calB$ are also known as complex-valued angle structures on $\calT$.
\end{itemize}
$\calA_{\calT}$ and $\calB_{\calT}$ are complex affine subspaces of
$\BC^{3N}$ of dimension $2N$ and $N+1$ respectively~\cite{LT}.

The integral in \eqref{eq.Ipre} extends to a meromorphic function of
$\th \in \calA_{\calT}$, regular when $\Re(\th) >0$. Our task is to show that 
this extension restricts to a meromorphic function on $\calB_{\calT}$. 
To do so, it will be convenient to parametrize $\calA_{\calT}$. This breaks the
symmetry in the definition of the integral, however it is a useful gauge
to draw conclusions. 

Consider a vector 
$\ve=(\ve_1,\dots,\ve_N) \in \BC^N$ and complex numbers $\mu,\lambda$ 
defined by
\be
\label{eq.abc}
\Abar \a + \Bbar \b + \Cbar \g 
=
\pi \mathbf{2} + \ve, \qquad \begin{matrix} 
\nu_\mu \cdot \a + \nu'_\mu \cdot \b + \nu''_\mu \cdot \g =\mu \\
\nu_\lambda \cdot \a + \nu'_\lambda \cdot \b + \nu''_\lambda \cdot \g =\lambda 
\end{matrix} 
\ee
where $\mu$ and $\lambda$ are the sums of the angles along the meridian and
the longitude curves, 
$\mathbf{2} \in \BC^N$  is the vector with all coordinates $2$ and
$\nu_\mu$, $\nu_\lambda$ and their primed versions are the vectors
of the meridian and longitude cusp equations. Note that $\ve_1+\dots+\ve_N=0$.
A \emph{quad} is a choice of a pair of opposite edges in a tetrahedron. 
Choosing a quad, allows one to eliminate the angle variable of those edges 
using the equation $\a_i+\b_i+\g_i=\pi$. Note that each tetrahedron has 3 quads,
hence $\calT$ has $3^N$ quads. If $Q$ is a system of $N$ quads obtained by 
choosing one quad from each tetrahedron of $\calT$, let $Q(\th) \in \BC^N$ 
be defined so that its $i$th component is the angle associated to the 
corresponding quad in $T_i$, i.e. $Q(\th)_i=\a_i$ if, for example, the quad 
of the $i$th tetrahedron is $(01),(23)$. In effect, $Q(\th)$ chooses one of
the 3 angles for every tetrahedron of $\calT$. Together with 
equations~\eqref{eq.abc}, we get a linear map
$$
\calA_{\calT} \to \BC^N \times \BC^N \times \BC^2, \qquad \th \mapsto 
(Q(\th),\ve,\mu,\lambda) \,.
$$
For $i,j \in \{1, \dots, N\}$, let $\pi_{i,j} : \BC^N \times \BC^N \times \BC^2
\to \BC^{N-1} \times \BC^{N-1} \times \BC^2$ denote the projection that
removes the $i$th entry of the first copy of $\BC^N$ and the $j$th entry
of the second copy of $\BC^N$. The next proposition describes a 
parametrization of $\calA_{\calT}$.

\begin{proposition}
\label{prop.quad}
For every $j$, there exists a system of quads $Q$ of $\calT$ and an $i$ 
such that the composition $\calA_{\calT} \to \BC^N \times \BC^N \times \BC^2 
\stackrel{\pi_{i,j}}\to \BC^{N-1} \times \BC^{N-1} \times \BC^2$ 
is an affine linear isomorphism.
\end{proposition}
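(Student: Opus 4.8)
The plan is to show that the linear map $\calA_{\calT} \to \BC^N \times \BC^N \times \BC^2$, $\th \mapsto (Q(\th),\ve,\mu,\lambda)$, becomes an isomorphism after a suitable projection $\pi_{i,j}$, for any prescribed $j$. Since $\calA_{\calT}$ has dimension $2N$ (by~\cite{LT}) and the target of $\pi_{i,j}\circ(\text{the map})$ has dimension $(N-1)+(N-1)+2=2N$, it suffices to prove injectivity, i.e.\ that the only $\th\in\BC^{3N}$ with $\a_i+\b_i+\g_i=0$ for all $i$ (the linearization of the pre-angle condition), with $\Abar\a+\Bbar\b+\Cbar\g=0$, with $\mu=\lambda=0$, with $Q(\th)_k=0$ for all $k\ne i$, and with $\ve_k=0$ for all $k\ne j$, is $\th=0$. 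Equivalently: such a $\th$ has at most two degrees of freedom — one coming from the single free quad variable $Q(\th)_i$, one from the single free edge-defect $\ve_j$ — and the claim is that a clever choice of $Q$ and $i$ (depending on $j$) kills both.

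First I would invoke the Neumann--Zagier symplectic structure. The key input is the Neumann--Zagier datum: the matrices $(\Abar\,|\,\Bbar\,|\,\Cbar)$ together with the cusp rows $(\nu_\mu,\nu'_\mu,\nu''_\mu)$ and $(\nu_\lambda,\nu'_\lambda,\nu''_\lambda)$ satisfy the NZ relations, which in particular say that the combined $(N+2)\times 3N$ matrix has rank $N+2$ and that a certain symplectic reduction holds. Concretely, for a fixed system of quads $Q$, Neumann--Zagier's theorem (as used in \cite{NZ,LT}) states that the $N\times 2N$ matrix $(\mathbf{A}\,|\,\mathbf{B})$ obtained by eliminating the non-quad angle via $\a_i+\b_i+\g_i=\pi$ and keeping the two "shape" coordinates per tetrahedron has the property that the rows spanning the edge and cusp equations form half of a symplectic basis; in particular the edge-and-cusp submatrix, restricted to any $N-1$ of the edges plus the two cusp rows, together with the projection to $N-1$ of the quad coordinates, is generically invertible. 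The content of Proposition~\ref{prop.quad} is precisely a non-generic, "for all $j$" refinement of this.

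The argument I would run for injectivity: suppose $\th$ is as in the previous paragraph. Using $\a_i+\b_i+\g_i=0$ I rewrite $\Abar\a+\Bbar\b+\Cbar\g=0$ in terms of the $N$ quad variables $Q(\th)_k$ alone, getting $\Gamma\, Q(\th)=0$ for an $N\times N$ matrix $\Gamma=\Gamma(Q)$ whose entries are differences of entries of $\Abar,\Bbar,\Cbar$ — this is exactly the Neumann--Zagier "gluing variety" matrix attached to the quad choice $Q$. Since $Q(\th)_k=0$ for $k\ne i$, the equation reduces to $Q(\th)_i\cdot\Gamma_i=\ve$, i.e.\ $Q(\th)_i$ times the $i$th column of $\Gamma$ equals a vector supported on the single edge $j$. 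Now the Neumann--Zagier result that the row of $\Gamma$ dual to any edge is nonzero (no edge equation is trivial, because each edge meets at least one tetrahedron in a quad-visible way) — more precisely, that for each $j$ one can choose $Q$ and $i$ so that $\Gamma$ is invertible *and* $(\Gamma^{-1})_{i,j}\ne 0$, equivalently the $(i,j)$ cofactor of $\Gamma$ is nonzero — forces $Q(\th)_i=0$ and hence $\ve_j=0$, so $\th=0$. Incorporating the cusp equations $\mu=\lambda=0$ into this bookkeeping is routine: they just replace two of the $N$ edge rows and the argument is identical. The genuinely hard step, and the one I expect to be the main obstacle, is the combinatorial/linear-algebra core: proving that \emph{for every edge} $j$ there is a quad system $Q$ and a tetrahedron index $i$ making the relevant cofactor of $\Gamma(Q)$ nonvanish. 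This is where one must use real structural input about ideal triangulations — e.g.\ that each edge is incident to some tetrahedron, that one can always pivot quad choices to make a given edge "pair nontrivially" with a chosen tetrahedron — rather than generic-rank arguments, since the statement is uniform in $j$. I would expect the authors to extract this from the Neumann--Zagier matrix theory (the nondegeneracy of the symplectic pairing between edges and tetrahedra), possibly together with a small combinatorial lemma allowing the pivot; I would try first to prove it by showing $\det\Gamma(Q)=\pm1$ for a well-chosen $Q$ (Neumann's result on the gluing variety matrices) and then that the cofactor expansion along edge $j$ cannot be identically zero across all admissible $Q$.
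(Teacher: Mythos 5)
There is a genuine gap, and it occurs at the central step of your injectivity argument. You claim that, using $\a_k+\b_k+\g_k=0$, the homogenized gluing equations $\Abar\a+\Bbar\b+\Cbar\g=0$ can be rewritten as $\Gamma\,Q(\th)=0$ for an $N\times N$ matrix $\Gamma$ acting on the $N$ quad variables alone. This is not possible: the per-tetrahedron sum constraint eliminates only \emph{one} of the three angles in each tetrahedron, leaving $2N$ independent coordinates (consistent with $\dim\calA_{\calT}=2N$), of which the quad coordinates $Q(\th)$ are only $N$. For the standard quads, eliminating $\b$ yields $A\a+B\g=\pi\mathbf{2}+\ve+\nu$ with $A=\Abar-\Bbar$, $B=\Cbar-\Bbar$ --- that is, $N$ equations in the $2N$ unknowns $(\a,\g)$, governed by the $N\times 2N$ Neumann--Zagier matrix $(A|B)$, not by a square matrix in $Q(\th)$. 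In your kernel element the non-quad angles (e.g.\ the $\g_k$) remain free unknowns, so the reduction to "$Q(\th)_i\cdot\Gamma_i=\ve$" and the subsequent cofactor argument do not get off the ground. You appear to have conflated the choice of one distinguished angle per tetrahedron (a quad) with the Neumann--Zagier shape reduction, which genuinely keeps \emph{two} coordinates per tetrahedron.

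Beyond this structural error, the step you yourself flag as "the genuinely hard step" --- that for every $j$ a suitable $(Q,i)$ makes the relevant determinant nonzero --- is left unproven, and in the corrected setting it is not a cofactor statement at all. The paper's route is: replace the $j$th row of $(A|B)$ by the meridian cusp row to obtain $(A'|B')$, which by Neumann--Zagier is the upper half of an integral symplectic matrix (with the longitude row available as the first row of the lower half); if $B'$ is invertible one solves for $\g$ in terms of $(\a,\pi_j(\ve),\mu)$ and then uses the longitude equation to trade $\a_i$ for $\lambda$; if $B'$ is not invertible, one invokes \cite[Lem.A.3]{DG} (a standard fact about upper halves of symplectic matrices) to change the quad system so that it is. Your instinct to use the symplectic structure of the Neumann--Zagier data is the right one, but it must be applied to the rectangular matrix $(A'|B')$ with the $j$th edge row swapped for a cusp row, not to a square "gluing variety matrix" in the quad variables.
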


\begin{proof}
Consider the standard system of quads $Q$ of $\calT$, i.e., the one that 
chooses the $(01),(23)$ edges of each tetrahedron of $\calT$.
It follows that $Q(\th)=\a$. Eliminating $\b$ from Equation~\eqref{eq.abc} 
we obtain that
\be
\label{eq.AB}
A \a + B \g = \pi\bf{2} + \ve + \nu
\ee
where
$$ 
A=\Abar-\Bbar, \qquad B=\Cbar-\Bbar, \qquad \nu = - \Bbar \bf{1}\pi \,.
$$
Consider the matrix $(A'|B')$ obtained from $(A|B)$ by replacing 
the $j$th row of $(A|B)$ with the peripheral cusp equation corresponding
to the meridian. Neumann--Zagier prove that $(A'|B')$ is the upper half of a 
symplectic $2N\times 2N$ matrix~\cite{NZ}. In fact we can take the first
row of the bottom half of the symplectic matrix to be the peripheral
cusp equation corresponding to the longitude. If $B'$ is invertible, then
we can solve for $\g$ from Equation~\eqref{eq.AB} and deduce that
$\th$ is determined by $(\a,\pi_j(\ve),\mu)$. Using the longitude cusp
equation, it follows that $\th$ is determined by 
$(\pi_i(\a),\pi_j(\ve),\mu,\lambda)$. 

When $B'$ is not invertible, using the fact that $(A'|B')$ is the
upper half of a symplectic matrix and \cite[Lem.A.3]{DG}, it follows that
we can always find a system of quads $Q$ for which the corresponding matrix $B'$
is invertible. The result follows.
\end{proof}

Without loss of generality, we can assume that Proposition~\ref{prop.quad} 
holds for
the standard system of quads, and that $i=j=N$. After a change of variables 
$x_i \to x_i/x_N$ for $i=1,\dots,N-1$ the integral $I_{\calT,\th}(q)$ reduces 
to an $N-1$ dimensional integral since the integrand is independent of $x_N$.

The next lemma shows that after a change of variables, $I_{\calT,\th}(q)$
is expressed as an integral whose contour (a product of tori, with radi
$|q|$ raised to linear forms of $\a$) depends only on $\a$ and whose 
integrand depends on only  $(\ve,\mu,\lambda)$. 

\begin{proposition}
\label{prop.ptheta}
With the above assumptions, there exists an edge column vector $\eta$
whose coordinates are affine linear forms in $\a$,
a contour $C_\a$, and affine linear forms $\nu_i, \nu_i', \nu_i''$ in the 
variables $\ve=(\ve_1,\dots,\ve_{N-1})$ and $\mu,\lambda$ such that after 
the change of variables $x_i=(-q)^{\eta_i} y_i$, we have
\small{
\begin{align}
\label{eq.Iptheta}
\Ipre_{\calT,\th}(q) &= c(q)^N 
\int_{C_\a} d\mu(y)
\\ \notag & \qquad 
\prod_{i=1}^N 
G_q\left((-q)^{\nu_i(\ve,\mu,\lambda)} y^{(\Cbar-\Bbar)_i} \right)
G_q\left((-q)^{\nu_i'(\ve,\mu,\lambda)} y^{(\Abar-\Cbar)_i} \right)
G_q\left((-q)^{\nu_i''(\ve,\mu,\lambda)} y^{(\Bbar-\Abar)_i} \right)
\end{align}
}
\end{proposition}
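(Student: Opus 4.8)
The plan is to start from the formula in Proposition~\ref{prop.ABC}, namely
$$
\Ipre_{\calT,\th}(q) = c(q)^N \int_{\BT^N} \prod_{i=1}^N
G_q\!\left((-q)^{\frac{\a_i}{\pi}} x^{(\Cbar-\Bbar)_i} \right)
G_q\!\left((-q)^{\frac{\b_i}{\pi}} x^{(\Abar-\Cbar)_i} \right)
G_q\!\left((-q)^{\frac{\g_i}{\pi}} x^{(\Bbar-\Abar)_i} \right) d\mu(x),
$$
and perform the substitution $x_i = (-q)^{\eta_i} y_i$, where $\eta$ is a column vector (indexed by edges) whose entries are affine linear forms in $\a$ to be chosen. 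Such a monomial substitution multiplies the integrand by powers of $(-q)$, shifting the exponent in each $G_q$ factor: the $\g$-type factor $G_q((-q)^{\b_i/\pi} x^{(\Abar-\Cbar)_i})$ becomes $G_q((-q)^{\b_i/\pi + \langle (\Abar-\Cbar)_i,\eta\rangle} y^{(\Abar-\Cbar)_i})$, and similarly for the other two. The contour $\BT^N$ is correspondingly deformed to a product of circles of radii $|q|^{\Re \eta_i}$; by Cauchy's theorem, since the integrand is meromorphic and the deformation can be carried out while the pre-angle structure stays strictly positive (so the $G_q$ arguments avoid the poles in $q^{-\BN}$ listed in Lemma~\ref{lem.Gq}(a)), the value of the integral is unchanged, and we call this deformed contour $C_\a$.

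Next I would choose $\eta$ so as to realize the claim. Proposition~\ref{prop.quad} (applied, WLOG, to the standard quad system with $i=j=N$) gives, after eliminating $\b$ via $\b = \pi\mathbf{1} - \a - \g$ and solving the resulting linear system~\eqref{eq.AB}, an expression for $\g$ as an affine function of $(\a, \pi_N(\ve), \mu)$ and then, using the longitude equation, for the full vector $\th$ as an affine function of $(\pi_N(\a), \pi_N(\ve), \mu, \lambda)$. The point is that the three exponents appearing in the $i$th tetrahedron's weight, namely $\tfrac{\a_i}{\pi} + \langle(\Cbar-\Bbar)_i,\eta\rangle$, $\tfrac{\b_i}{\pi} + \langle(\Abar-\Cbar)_i,\eta\rangle$, $\tfrac{\g_i}{\pi} + \langle(\Bbar-\Abar)_i,\eta\rangle$ (reading off the correspondence with \eqref{eq.B2}), should become affine forms $\nu_i, \nu_i', \nu_i''$ purely in $(\ve,\mu,\lambda)$ with no dependence on $\a$. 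Since $\langle(\Abar-\Cbar)_i,\eta\rangle - \langle(\Cbar-\Bbar)_i,\eta\rangle$ and the analogous differences are controlled by the columns of $\Abar,\Bbar,\Cbar$, and since the edge-gluing relation $\Abar\a + \Bbar\b + \Cbar\g = \pi\mathbf{2}+\ve$ together with $\a_i+\b_i+\g_i = \pi$ expresses a specific linear combination of the three exponents in terms of $\ve$, the task is to pick $\eta$ (linearly in $\a$) so that the $\a$-dependence of each individual exponent is killed. This is exactly where one uses that $(A'|B')$ is the upper half of a symplectic matrix with $B'$ invertible: that invertibility lets one solve $A\a + B\g \equiv (\text{affine in } \ve,\mu)$ for $\g$ in terms of $\a$, and the symplectic/Neumann--Zagier identities are what make the required cancellation consistent across all $N$ tetrahedra simultaneously. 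Concretely I expect $\eta$ to be built from $B'^{-1}$ applied to $\a$ (plus constants), so that substituting $x = (-q)^\eta y$ trades the $\a$-dependence of the integrand for $\a$-dependence of the contour only.

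The main obstacle is bookkeeping: one must verify that a single edge-vector $\eta$, affine-linear in $\a$ alone, simultaneously removes the $\a$-dependence from all $3N$ exponents, leaving them as affine forms in $(\ve_1,\dots,\ve_{N-1},\mu,\lambda)$. This is not automatic from dimension counting — it relies on the precise combinatorial structure of the Neumann--Zagier matrices, specifically the relation $\Abar + \Bbar + \Cbar$ acting on the all-ones vector and the symplectic pairing between the gluing rows and the cusp rows (the relations recorded implicitly in~\eqref{eq.abc} and exploited in Proposition~\ref{prop.quad}). I would organize this by writing the exponent vector of, say, the $\g$-type factors as the column-indexed vector $\tfrac{1}{\pi}\b + (\Abar-\Cbar)^T\eta$ and checking that choosing $\eta$ to satisfy an appropriate linear equation (whose solvability is guaranteed by invertibility of $B'$ from Proposition~\ref{prop.quad}) forces this to equal $(\Abar-\Cbar)^T$ applied to something $\a$-free plus an $\a$-free correction coming from the constraint $\b = \pi\mathbf{1}-\a-\g$ and the solved expression for $\g$. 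Once the forms $\nu_i,\nu_i',\nu_i''$ are identified and shown to be affine in $(\ve,\mu,\lambda)$ only, and the contour $C_\a$ is named, formula~\eqref{eq.Iptheta} follows, with the reduction to $N-1$ dimensions coming from the $x_N$-independence already noted before the statement. A minor point to be careful about is keeping track of the change of measure $d\mu(x) = d\mu(y)$ under the multiplicative substitution, which is immediate since $dx_i/x_i = dy_i/y_i$.
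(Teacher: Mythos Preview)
Your plan is exactly the paper's approach, and the choice you anticipate is the one the paper makes: take $\eta=(B'^{T})^{-1}\a$ (linear in $\a$, using the invertibility of $B'$ from Proposition~\ref{prop.quad}). The one point you leave as ``bookkeeping'' is in fact the substantive step, and it is \emph{not} just bookkeeping: with this $\eta$ one has $\a+B'^{T}\eta$ trivially $\a$-free (it equals $2\a$\dots wait, rather $\a-B'^{T}\eta=0$ up to the $B$ vs.\ $B'$ discrepancy on the removed row, which is absorbed into $\mu$), and the nontrivial cancellation is
\[
\g-(A')^{T}\eta \;=\; (B')^{-1}(\pi\mathbf{2}+\ve+\nu)-(B')^{-1}A'\a-(A')^{T}(B'^{T})^{-1}\a
\;=\;(B')^{-1}(\pi\mathbf{2}+\ve+\nu),
\]
where the $\a$-terms drop out precisely because $(B')^{-1}A'$ is \emph{symmetric}---this is the specific Neumann--Zagier/symplectic identity you need (it follows from $(A'|B')$ being the top half of a symplectic matrix, hence $A'(B')^{T}=B'(A')^{T}$). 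The third exponent is then automatic since the three sum to the constant $1$. So your outline is correct, but you should name this symmetry explicitly rather than deferring it to ``the symplectic identities'': it is the entire content of the proof beyond the change of variables.
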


\begin{proof}
Recall equations~\eqref{eq.Xgb} and \eqref{eq.AB}. If $x=(-q)^\eta y$ then
$$
x^{(\Cbar-\Bbar)e_i} 
= (-q)^{\a_i + e_i^T (\Cbar-\Bbar)^T \eta} y^{(\Cbar-\Bbar)e_i}=
(-q)^{\a_i + e_i^T B^T \eta} y^{B e_i}   
$$
and likewise for the  cyclic permutations. Using equation~\eqref{eq.BTABC}, 
and the above equalities, we are looking for an edge vector $\eta$
such that $\a+B^T \eta$, $\b+(A-B)^T\eta$ and $\g-A^T\eta$ depend on only
$\ve,\mu,\lambda$. Since the sum of these three vectors is constant, it
suffices to find $\eta$ such that $\a-B^T \eta$ and $\g-A^T\eta$ 
depend on only $\ve,\mu,\lambda$. Using the fact that $B'$ is invertible, 
we can take $\eta=(B'^T)^{-1}\a$. Solving for $\g$ from equation~\eqref{eq.AB},
we obtain that
$$
\g-A'^T\eta=B'^{-1}(\pi\mathbf{2}+\ve+\nu)-(B')^{-1}A'\a-(A')^T((B')^T)^{-1}\a =
B'^{-1}(\pi\mathbf{2}+\ve+\nu)\,.
$$
The last equality follows from the fact that $(A'|B')$ is the upper half 
of a symplectic matrix, hence $(B')^{-1}A'$ is symmetric. Finally observe
that $|x_i|=1$, hence $|y_i|=|q|^{\eta_i}$ where $\eta_i$ are linear forms in
$\a$. Hence, the $y$-contour $C_\a$ is a product of tori whose radii depend
linearly on $\a$. This completes the proof. 
\end{proof}

\begin{remark}
\label{rem.ptheta2}
With the assumptions of Proposition~\ref{prop.ptheta}, there exists an edge 
column vector $\eta'$ whose coordinates are affine linear forms in 
$\alpha, \mu,\lambda$, a contour $C$, and affine linear forms 
$\xi_i, \xi_i', \xi_i''$ 
in the variables $\ve=(\ve_1,\dots,\ve_{N-1})$, $\mu$ and $\lambda$ such that 
after the change of variables $y_i=(-q)^{\eta'_i} z_i$, we have
\small{
\begin{align}
\label{eq.Iptheta2}
\Ipre_{\calT,\th}(q) &= c(q)^N 
\int_{C} d\mu(z)
\\ \notag & \qquad 
\prod_{i=1}^N 
G_q\left((-q)^{\xi_i(\ve,\mu,\lambda)} z^{(\Cbar-\Bbar)_i} \right)
G_q\left((-q)^{\xi_i'(\ve,\mu,\lambda)} z^{(\Abar-\Cbar)_i} \right)
G_q\left((-q)^{\xi_i''(\ve,\mu,\lambda)} z^{(\Bbar-\Abar)_i} \right)
\end{align}
}
where $\xi_i(0,\mu,\lambda)$, $\xi'_i(0,\mu,\lambda)$ and 
$\xi''_i(0,\mu,\lambda)$ are $\BZ$-linear combinations of 
$1,\mu/pi,\lambda/(2 \pi)$.
This follows from the symplectic properties of the Neumann--Zagier 
matrices~\cite{NZ} (compare also with the matrices $(A'|B')$ 
in equation~\eqref{eq.A'B'} below).
\end{remark}


The contours in equations~\eqref{eq.Iptheta} and 
\eqref{eq.Iptheta2} depend on a positive pre-angle structure, but the 
integrals are independent of the choice of the pre-angle structure. When 
we balance, i.e., set $\ve=0$, there are two posibilities: either we 
can move the contour by a small isotopy in order to avoid the singularities 
of the integrand, or we cannot do so. In the former case, the new contour 
is canonically defined from the old contour. In the latter case, we apply 
the residue theorem to change the integration contour, and the 
residue contribution is either finite or infinite. In the latter case, by 
definition, our meromorphic function is infinity.

\begin{remark}
\label{rem.bad}
An example of an integral where the
latter case occurs is the following integral
\be
\label{eq.bad}
I_\ve(C):=\int_C \frac{dz}{z} G_q(z) G_q((-q)^\ve z^{-1})
\ee
where $C$ is the contour $|z|=1-\delta$ for $\delta>0$ small. 
The singularities of the integrand
are $z \in q^{-\BN}$ and $z \in (-q)^\ve q^\BN$. When $\ve$ approaches zero,
the contour is pinched from two sides by the singularities at $z=(-q)^\ve$ 
and $z=1$. To avoid this pinching, by applying the residue theorem, we move 
the contour of integration $C$ to the other side of $1$, i.e. the contour 
$C'$  given by  $|z|=1+\delta$:  
\be
I_\ve(C)=I_\ve(C')-2\pi i \operatorname{Res}_{z=1} 
\left(\frac{1}{z} G_q(z) G_q((-q)^\ve z^{-1})\right)=
I_\ve(C')+2\pi i \frac{\poc{-q;q}{\infty}}{\poc{q;q}{\infty}}G_q((-q)^\ve)
\,.
\ee 
Even though $I_\ve(C')$ is regular at $\ve=0$, the contribution 
from the residue is singular because of the simple pole of $G_q((-q)^\ve)$
at $\ve=0$. Thus, we conclude that $I_0(C)=\infty$.
\end{remark}

The next proposition defines the meromorphic function that appears in
Theorem~\ref{thm.1}.

\begin{proposition}
\label{prop.balance}
Setting $\ve=0$ (i.e., balancing the edges), and assuming that we find
a contour $C$, we obtain a meromorphic function 
of $(\emu,\elambda):=((-q)^{\mu/\pi},(-q)^{\lambda/(2\pi)}) \in (\BC^*)^2$
\small{
\begin{align*}
I_{\calT,\emu,\elambda}(q) &= c(q)^N 
\int_C d\mu(z)
\\ & \qquad 
\prod_{i=1}^N 
G_q\left((-q)^{\xi_i(0,\mu,\lambda)} z^{(\Cbar-\Bbar)_i} \right)
G_q\left((-q)^{\xi_i'(0,\mu,\lambda)} z^{(\Abar-\Cbar)_i} \right)
G_q\left((-q)^{\xi_i''(0,\mu,\lambda)} z^{(\Bbar-\Abar)_i} \right) \,.
\end{align*}
}
The above integral is absolutely convergent.
\end{proposition}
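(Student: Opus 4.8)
The plan is to verify two things: first, that the integrand extends meromorphically in $(\emu,\elambda)$, and second, that the contour $C$ can be chosen so that the integral is absolutely convergent and the result is genuinely meromorphic (not identically infinite) in these variables. For the first point, recall from Remark~\ref{rem.ptheta2} that after the change of variables $y_i=(-q)^{\eta_i'}z_i$ the integrand is a product of $G_q$-factors whose first arguments are of the form $(-q)^{\xi_i(\ve,\mu,\lambda)}z^{(\Cbar-\Bbar)_i}$ (and cyclic permutations), where the exponents $\xi_i(0,\mu,\lambda)$ are $\BZ$-linear combinations of $1,\mu/\pi,\lambda/(2\pi)$. Writing $\emu=(-q)^{\mu/\pi}$ and $\elambda=(-q)^{\lambda/(2\pi)}$, each such argument becomes $(-q)^{n_i}\emu^{a_i}\elambda^{b_i} z^{(\Cbar-\Bbar)_i}$ with $n_i\in\BZ$ and $a_i,b_i\in\BZ$. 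Since $G_q$ is meromorphic on $\BC^*$ by Lemma~\ref{lem.Gq}(a), each factor is a meromorphic function of $(\emu,\elambda,z)$, and hence so is the integrand.

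Next I would address convergence. By construction (Proposition~\ref{prop.ptheta} and Remark~\ref{rem.ptheta2}) the contour $C$ is a product of circles $|z_i|=|q|^{c_i}$ where the $c_i$ are the values at $\ve=0$ of affine linear forms in a \emph{positive} pre-angle structure $\a$; the key point inherited from that construction is that for the original positive $\th$ the arguments of all $G_q$-factors lie in the annulus $|q|<|{\cdot}|<1$ of absolute convergence coming from the domain~\eqref{eq.psi0R} that went into defining $B(T,x,\th)$. Balancing, i.e. setting $\ve=0$, leaves the contour a compact product of circles, and away from the (at most countably many, discrete) loci where some $G_q$-argument hits $q^{-\BN}$ or $-q^{1+\BN}$, the integrand is continuous on the compact set $C$, hence bounded, hence the integral converges absolutely. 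Thus for generic $(\emu,\elambda)$ — precisely, outside the finitely many shifted $q$-rays where the contour $C$ meets a pole locus of some $G_q$-factor — the integral is absolutely convergent and defines a holomorphic function; this is the content of "assuming that we find a contour $C$."

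To upgrade holomorphy off the bad locus to meromorphy everywhere, I would argue as in Remark~\ref{rem.bad}: near a shifted $q$-ray $\ve q^t\Sigma_{r,s}$ where the contour $C$ is pinched between a pole of one $G_q$-factor approaching from inside and a zero/regular point approaching from outside, one applies the residue theorem to push $C$ across the offending singularity. Each such move replaces $\int_C$ by $\int_{C'}$ (regular near that ray) plus $2\pi\im$ times a residue; the residue is again an integral of the same shape over a lower-dimensional contour, with one $G_q$-factor replaced by its residue at a pole in $q^{-\BN}$, which by Lemma~\ref{lem.Gq}(b)--(c) contributes a factor with at worst a simple pole in the peripheral variables. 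Iterating over the finitely many pinching rays expresses $I_{\calT,\emu,\elambda}(q)$ as a finite sum of absolutely convergent integrals, each holomorphic off a shifted $q$-ray with at worst simple poles there; hence $I_{\calT,\emu,\elambda}(q)$ is meromorphic in $(\emu,\elambda)\in(\BC^*)^2$, with the stated absolute convergence of the defining integral on the complement of the bad locus. The main obstacle is bookkeeping: tracking which $G_q$-arguments can collide on the contour $C$ as $(\emu,\elambda)$ varies — this is a linear-algebra computation with the Neumann--Zagier matrices $(\Abar|\Bbar|\Cbar)$ and the forms $\xi_i,\xi_i',\xi_i''$ — and checking that these collision loci are exactly shifted $q$-rays and that the residue contributions only worsen the pole order in a controlled (simple-pole) way under iteration.
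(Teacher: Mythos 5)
Your argument is correct and is essentially the paper's own (the paper gives no formal proof of this proposition: it relies on the preceding discussion of isotoping or residue-shifting the contour when $\ve=0$, on Remark~\ref{rem.ptheta2} for the fact that the exponents $\xi_i(0,\mu,\lambda)$ are $\BZ$-linear in $1,\mu/\pi,\lambda/(2\pi)$ so the integrand descends to a function of $(\emu,\elambda)$, and on the pinching analysis of Lemma~\ref{lem.mero} to locate the singularities on shifted $q$-rays). The only overstatement is your claim that the residue contributions have at worst simple poles --- simultaneous pinching of the contour by several $G_q$-factors can produce higher-order poles --- but meromorphy, which is all the proposition asserts, does not require this.
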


Let us phrase our previous proposition in more invariant language.
Recall the complex torus $\BT_M=H^1(\pt M,\BC^*)$. Define the map
\be
\label{eq.fiberpi}
\varpi_{\calT}: \calB_{\calT} \to \BT_M, 
\qquad \delta \mapsto (-q)^{\text{hol}(\delta)/\pi}
\ee
for a simple closed curve $\delta$ of $\partial M$, where 
$\text{hol}(\delta)$ denotes the angle holonomy along $\delta$. 
Proposition~\ref{prop.balance} states that the restriction $\Ibal_{\calT}$ of 
the meromorphic function $\Ipre_{\calT}$ on $\calB$ is the pullback of 
a meromorphic function $I_\calT$ on $\BT_M$. In other words, the following
diagram commutes:

\be
\label{eq.CD}
\xymatrix{ \calA_{\calT} \ar[rd]_{\Ipre_{\calT}} & 
\calB_{\calT} 
\ar@{_{(}->}[l] \ar@{->>}[r]^{\varpi_{\calT}} \ar[d]_{\Ibal_{\calT}} 
& \BT_M \ar[ld]_{I_{\calT}} \\
& \BC
}
\ee

\subsection{Identification with the 3D-Index of Dimofte--Gaiotto--Gukov}
\label{sub.identify}

In this sub-section we discuss the Laurent expansion of the meromorphic 
function $I_{\calT,\emu,\elambda}(q)$ on the real torus $|\emu|=|\elambda|=1$,
under the assumption that $\calT$ supports a strict  angle structure.
As we will show, the coefficients of the Laurent series are the 
3D-index of Dimofte--Gaiotto--Gukov. This will conclude the 
proof of Theorem~\ref{thm.2}.

The next lemma (for $h=0$) is based on the idea that the upper half part of 
a symplectic matrix with integer entries is a pair of coprime matrices.

\begin{lemma}
\label{lem.symplectic}
Suppose $M=\begin{pmatrix} A & B \\ C & D \end{pmatrix}$ 
is a symplectic matrix with integer entries where $A,B,C,D$ are $N \times N$ 
matrices.
Let $(A'|B')$ denote the upper $(N+h)\times 2N$
part of $M$ and consider $r,s \in \BZ^{N}$ that satisfy the equation
$$
A' r + B' s = \begin{pmatrix} 0 \\ \nu \end{pmatrix}
$$ for a vector $\nu \in \BZ^h$. 
Then, there exists $k \in \BZ^{N-h}$ such that
$$
r=-B'^T \begin{pmatrix} \nu \\ k \end{pmatrix}, \qquad 
s=A'^T  \begin{pmatrix} \nu \\ k \end{pmatrix}\,.
$$
\end{lemma}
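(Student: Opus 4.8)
The plan is to use the defining relations of a symplectic matrix to show that the vector $\binom{r}{s}\in\BZ^{2N}$ lies in the ``Lagrangian row span'' of the bottom half of $M$, and then to read off the asserted formula. First I would recall that if $M=\begin{pmatrix} A & B\\ C & D\end{pmatrix}$ is symplectic (with respect to $J=\begin{pmatrix} 0 & I\\ -I & 0\end{pmatrix}$) then $M J M^T = J$, which in block form gives the familiar identities $AB^T=BA^T$, $CD^T=DC^T$, and $AD^T-BC^T=I$. Write $(A'|B')$ for the first $N+h$ rows of $(A|B)$; these are the first $N$ rows coming from $A,B$ together with $h$ extra rows taken from the ``peripheral'' block, but in any case $(A'|B')$ consists of $N+h$ of the $N$ rows of the top half and (in the application) the cusp rows. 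The key structural input, which the paper attributes to Neumann--Zagier and to \cite[Lem.A.3]{DG}, is that $(A'|B')$ can be completed to a full symplectic $2N\times 2N$ integer matrix $\widetilde M=\begin{pmatrix} A' & B'\\ C' & D'\end{pmatrix}$; I would simply invoke this.

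The core of the argument is then linear algebra over $\BZ$. Set $w:=\binom{r}{s}\in\BZ^{2N}$. The hypothesis says $(A'|B')\,w = \binom{0}{\nu}$, i.e. the first $N$ coordinates of the vector $\widetilde M\, J\, J^{-1} w$... — more cleanly: I would compute $\widetilde M\,(J w)$. Using $\widetilde M J \widetilde M^T = J$ one gets $\widetilde M^{-1} = -J \widetilde M^T J$, so $\widetilde M^{-1}$ has integer entries and in block form $\widetilde M^{-1} = \begin{pmatrix} D'^T & -B'^T\\ -C'^T & A'^T\end{pmatrix}$. Now observe that the hypothesis $A' r + B' s = \binom{0}{\nu}$ together with the freedom in $C', D'$ means that the full product $\widetilde M\, w$ has known first $N$ coordinates, namely $\binom{0}{\nu}$; call the bottom $N$ coordinates $t\in\BZ^N$, so $\widetilde M w = \binom{\binom{0}{\nu}}{t}$. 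Applying $\widetilde M^{-1}$ gives
\begin{equation}
w = \begin{pmatrix} D'^T & -B'^T\\ -C'^T & A'^T\end{pmatrix}\begin{pmatrix} 0\\ \nu\\ t\end{pmatrix},
\end{equation}
which, reading the two blocks, yields $r = D'^T\binom{0}{\nu} - B'^T t$ and $s = -C'^T\binom{0}{\nu} + A'^T t$. Writing $t = \binom{t_1}{t_2}$ with $t_1\in\BZ^h$, $t_2\in\BZ^{N-h}$, the term $D'^T\binom{0}{\nu}$ is $D'^T$ applied to a vector whose first $h$ entries we still need to kill; here I would use that we may normalize the completion $\widetilde M$ so that the block structure of $C',D'$ is adapted — concretely, the freedom to modify $(C'|D')$ by adding integer multiples of rows of $(A'|B')$ (an operation preserving symplecticity) lets us arrange the bookkeeping so that the contribution collapses to the stated form $r = -B'^T\binom{\nu}{k}$, $s = A'^T\binom{\nu}{k}$ with $k:=t_2\in\BZ^{N-h}$.

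\textbf{Main obstacle.} The delicate point is precisely this last normalization step: turning the ``abstract'' completion of $(A'|B')$ into one whose lower half $(C'|D')$ is compatible enough with the index splitting $N = h + (N-h)$ that the formula comes out literally as stated, rather than up to a correction by rows of $(A'|B')$. I would handle it by noting that for the conclusion one only needs the membership $\binom{r}{s}\in (\text{row span over }\BZ\text{ of }(A'|B'))^{\perp_J}$, i.e. that $\binom{r}{s}$ is $J$-orthogonal to every row of $(A'|B')$ except that it pairs with the $h$ ``cusp'' rows to give $\nu$; once that is established, $\binom{r}{s}$ lies in the integer span of the columns of $\binom{-B'^T}{A'^T}$ (these columns span the $J$-orthogonal complement, a fact again equivalent to $(A'|B')$ being extendable to a symplectic basis), and tracking which columns produce the prescribed pairing $\nu$ pins down the first $h$ coefficients to be exactly $\nu$, leaving $k\in\BZ^{N-h}$ free. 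The $h=0$ case, which is what is actually used for the contour in Proposition~\ref{prop.ptheta}, is the cleanest: there the hypothesis is $A'r+B's=0$, and the statement is simply that the integer kernel of $(A'|B')$ acting this way is spanned by the columns of $\binom{-B'^T}{A'^T}$, which is immediate from $\widetilde M^{-1}$ having integer entries.
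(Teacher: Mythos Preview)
Your core idea---hit $w=\binom{r}{s}$ with the symplectic matrix and then invert via the block formula $M^{-1}=\begin{pmatrix} D^T & -B^T\\ -C^T & A^T\end{pmatrix}$---is exactly what the paper does. The trouble is that you have misread the setup, and this misreading manufactures the ``main obstacle'' you then struggle with. In the lemma $M$ is \emph{given} as a $2N\times 2N$ symplectic integer matrix, and $(A'|B')$ is by definition its first $N+h$ rows (the first $N$ rows are $(A|B)$, the next $h$ are the first $h$ rows of $(C|D)$). There is no completion $\widetilde M$ to construct and hence no normalization of $(C'|D')$ to arrange; the Neumann--Zagier input you cite is invoked in the \emph{application} of the lemma (to produce such an $M$ from the gluing and cusp equations), not in its proof.

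Once the statement is read correctly the argument collapses to two lines, as in the paper: the hypothesis says precisely that the first $N+h$ entries of $Mw$ equal $\binom{0}{\nu}$ (with $0\in\BZ^N$, $\nu\in\BZ^h$); define $k\in\BZ^{N-h}$ to be the remaining $N-h$ entries, so that $Mw=\binom{0}{k'}$ with $k':=\binom{\nu}{k}\in\BZ^N$; then $w=M^{-1}\binom{0}{k'}=\binom{-B^Tk'}{A^Tk'}$ and one is done. Your dimension bookkeeping also slips: you call $\binom{0}{\nu}$ the ``first $N$ coordinates'' of $\widetilde M w$ and then take $t\in\BZ^N$ for the remainder, which over-counts by $h$ entries and is what forces the spurious splitting $t=\binom{t_1}{t_2}$ and the subsequent clean-up. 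The $J$-orthogonality and row-operation discussion in your final paragraph is not incorrect in spirit, but it is solving a problem that is simply not present once $M$ itself is used.
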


\begin{proof}
We apply the symplectic matrix 
$M=\begin{pmatrix} A & B \\ C & D \end{pmatrix}$ to the vector 
$\begin{pmatrix} r \\ s \end{pmatrix}$ and define $k \in \BZ^{N-h}$ so that
$k':=\begin{pmatrix} \nu \\ k \end{pmatrix} \in \BZ^N$ satisfies
$ M \begin{pmatrix} r \\ s \end{pmatrix} = 
\begin{pmatrix} 0 \\ k' \end{pmatrix}$.
Then, 
$$
\begin{pmatrix} r \\ s \end{pmatrix} = M^{-1}
\begin{pmatrix} 0 \\ k' \end{pmatrix} =
\begin{pmatrix} D^T & -B^T \\ -C^T & A^T \end{pmatrix}
\begin{pmatrix} 0 \\ k' \end{pmatrix} =
\begin{pmatrix} -B^T k' \\ A^T k' \end{pmatrix} \,.
$$
The result follows.
\end{proof}

\begin{proof}(of Theorem~\ref{thm.2})
Fix an ideal triangulation $\calT$ of $M$ with $N$ tetrahedra and let
$\Abar, \Bbar$ and $\Cbar$ denote the Neumann--Zagier matrices describing
gluing equations~\eqref{eq.NZ}. If we eliminate the shape $z_i'=-1/(z_i z_i'')$,
we obtain the gluing equations in the form:
\be
\label{eq.NZ2}
\prod_{j=1}^N z_j^{A_{i,j}} 
(z_j'')^{B_{i,j}} =(-1)^{\nu_i}, \qquad i=1,\dots,N
\ee
where 
$$
A=\Abar-\Bbar, \qquad B=\Cbar-\Bbar \,.
$$ 
Notce that $\Abar-\Cbar=A-B$, $\Bbar-\Cbar=-B$. Fix a strict angle structure
$\th$ and use Proposition~\ref{prop.ABC} to write the 
integrand of $\Ipre_{\calT,\th}$ as follows:
\begin{align*}
\prod_{i=1}^N B(T_i,x,\th) &= \prod_{i=1}^N \psi^0\left(
(-q)^{-\frac{\a_i+\g_i}{\pi}}\frac{X_{\a_i}}{X_{\g_i}}, 
(-q)^{-\frac{\a_i}{\pi}} \frac{X_{\b_i}}{X_{\g_i}}  \right) \\
&= \prod_{i=1}^N \psi^0\left(
(-q)^{-\frac{\a_i+\g_i}{\pi}} x^{(\Abar-\Cbar)_i}, 
(-q)^{-\frac{\a_i}{\pi}} x^{(\Bbar-\Cbar)_i}  \right) \\
&=\prod_{i=1}^N \psi^0\left(
(-q)^{-\frac{\a_i+\g_i}{\pi}} x^{(A-B)_i}, 
(-q)^{-\frac{\a_i}{\pi}} x^{-(B)_i}  \right) \,.
\end{align*}
Use equation~\eqref{eq.psi0me} to expand the integrand of 
$\Ipre_{\calT,\th}$ as a convergent series on the torus $|x_i|=1$ 
(for $i=1,\dots,N$): 
$$
\prod_{i=1}^N B(T_i,x,\th) = \sum_{r, s \in \BZ^N}
\IKD(r_1,s_1) \dots \IKD(r_N,s_N) x^{\sum_i (A-B)_i s_i }
x^{-\sum_i (B)_i r_i } (-q)^{-\frac{1}{\pi}\sum_i (\a_i+\g_i)s_i+\a_i r_i}
$$
where $r=(r_1,\dots,r_N) \in \BZ^N$ and $s=(s_1,\dots,s_N) \in \BZ^N$.
Now, we can compute the absolutely convergent integral $\Ipre_{\calT,\th}$
by integrating over the torus. After interchanging summation and integration
(justified by uniform convergence) and applying the residue theorem we
obtain a sum over $r,s \in \BZ^N$ such that $(A-B)s-Br=0$:
\begin{align*}
\Ipre_{\calT,\th}&=\int_{\BT^N} d\mu(x) \prod_{i=1}^N B(T_i,x,\th)
\\
&=\sum_{r, s}
\IKD(r_1,s_1) \dots \IKD(r_N,s_N) 
(-q)^{-\frac{1}{\pi}\sum_i (\a_i+\g_i)s_i+\a_i r_i} \int_{\BT^N} d\mu(x)
x^{\sum_i (A-B)_i s_i-\sum_i (B)_i r_i } \\
&= \sum_{r, s: (A-B)s-Br=0}
\IKD(r_1,s_1) \dots \IKD(r_N,s_N) 
(-q)^{-\frac{1}{\pi}\sum_i (\a_i+\g_i)s_i+\a_i r_i} \,.
\end{align*}
Collecting further terms whose meridian and (half) longitude holonomy is a 
fixed integer, we obtain the uniformly convergent sum:
\be
\label{eq.Ithm2}
\Ipre_{\calT,\th}(q) = 
\sum_{m, e \in \BZ} (-q)^{\frac{m\mu}{\pi}}(-q)^{\frac{e\lambda}{2\pi}}
\sum_{r, s} \IKD(r_1,s_1) \dots \IKD(r_N,s_N)
\ee 
where $r,s \in \BZ^N$ satisfy the equation 
\be
\label{eq.A'B'}
A's + B'(-r-s) = \begin{pmatrix} 0 \\ \nu \end{pmatrix}, \qquad
\nu = \begin{pmatrix} m \\ e \end{pmatrix} 
\ee
for the matrix $(A'|B')$ (where $A',B'$ are $(N+1)\times N$ matrices)
obtained from $(A|B)$ after we remove any one row of it and replace it with 
two rows of the meridian and half-longitude monodromy. 
Neumann--Zagier~\cite{NZ} prove that $(A'|B')$ can be completed to a 
symplectic matrix. Using this, and Lemma~\ref{lem.symplectic}, it follows
that there exists $k \in \BZ^{N-1}$ such that 
$$
s=-B'^T \begin{pmatrix} \nu \\ k \end{pmatrix},
\qquad -r-s=A'^T \begin{pmatrix} \nu \\ k \end{pmatrix} \,.
$$ 
Let $a_i$ and $b_i$ for $i=1,\dots,N$ denote the $i$th column of $A'$ and 
$B'$ respectively and let $k'=\begin{pmatrix} \nu \\ k \end{pmatrix}$.
Using the above and 
equations~\eqref{eq.IKD2}, \eqref{eq.IKD22}, we obtain that
\begin{align*}
\IKD(r_i,s_i) &=\IKD(-a_i \cdot k' + b_i \cdot k' , -b_i \cdot k')
=\IKD(-b_i \cdot k', a_i \cdot k') 
\end{align*}
for $i=1,\dots,N$. Combined with equation~\eqref{eq.Ithm2}, this gives
\begin{align*}
\Ipre_{\calT,\th}(q) &= 
\sum_{m, e \in \BZ} (-q)^{\frac{m\mu}{\pi}}(-q)^{\frac{e\lambda}{2\pi}}
\sum_{k \in \BZ^{N-1}} \prod_{i=1}^N \IKD(r_i,s_i)(q) \\
&= 
\sum_{m, e \in \BZ} (-q)^{\frac{m\mu}{\pi}}(-q)^{\frac{e\lambda}{2\pi}}
\sum_{k \in \BZ^{N-1}} \prod_{i=1}^N \IKD(-b_i \cdot k', a_i \cdot k')(q) \\
&= 
\sum_{m, e \in \BZ} (-q)^{\frac{m\mu}{\pi}}(-q)^{\frac{e\lambda}{2\pi}}
\sum_{k \in \BZ^{N-1}} \prod_{i=1}^N (-q)^{v \cdot k'} 
\ID(-b_i \cdot k', a_i \cdot k')(q^2) \,.
\end{align*}
where the last equality follows from equation~\eqref{eq.3d2}
and $v=(1,\dots,1,m,e) \in \BZ^{N+1}$. 
The latter sum coincides with the 3Dindex of~\cite{DGG1}; see 
also~\cite[Sec.4.5]{GHRS}. This completes the proof of Theorem~\ref{thm.2}.
\end{proof}

\subsection{Invariance under 2--3 Pachner moves}
\label{sub.23}

Next we prove the invariance of the meromorphic function $I_{\calT}$
under 2--3 Pachner moves. Consider two ideal triangulations $\calT$ and 
$\widetilde\calT$ with $N$ and $N+1$ tetrahedra
that are related by a 2--3 Pachner move as in Figure~\ref{f.23move}
and choose $\th$ and $\widetilde\th$ compatible positive angle structures
on $\calT$ and $\widetilde\calT$ that satisfy equations~\eqref{eq.5angles}. 
In particular, this means that the sum of angles around the interior edge of 
the 3 tetrahedra is $2 \pi$. 


Note that $\widetilde\th$ determines $\th$, but not
vice-versa. Let us denote the linear map $\widetilde\th \mapsto \th$ by
$\th=m^2_3(\widetilde\th)$. The commutative diagram~\eqref{eq.CD} gives a 
commutative diagram

\be
\xymatrix{ \calA_{\widetilde\calT} \ar@{->>}[d]_{m^2_3} & 
\calB_{\widetilde\calT} 
\ar@{_{(}->}[l] 
\ar@{->>}[r]^{\varpi_{\widetilde\calT}} 
\ar@{->>}[d]_{m^2_3}
& \BT_M \ar@{=}[d] \\
\calA_{\calT} & 
\calB_{\calT} 
\ar@{_{(}->}[l] 
\ar@{->>}[r]^{\varpi_{\calT}} 
& \BT_M 
}
\ee
Let $\widetilde\th$ and $\th$ be positive pre-angle structures.
We claim that 
\be
\label{eq.5term}
\Ipre_{\calT,\th}(q)=\Ipre_{\widetilde\calT,\widetilde\th}(q) \,.
\ee
This follows by separating the integration variable of the inner edge of the 
2--3 move in the integral $\Ipre_{\widetilde\calT,\widetilde\th}(q)$ and
applying the pentagon identity~\eqref{eq.pentagonKpsi0nc}
to that variable. Since a meromorphic function is uniquely determined
by its values on positive pre-angle structures and the map $m^2_3:
\calA_{\widetilde\calT} \to \calA_{\calT}$ is onto, it follows that
$\Ipre_{\widetilde\calT} = \Ipre_{\calT} \circ m^2_3$. Now restrict to
$\calB_{\widetilde\calT}$, use the above commutative diagram and the fact that
$m^2_3$ is onto. It follows that 
$\Ibal_{\widetilde\calT} = \Ibal_{\calT} \circ m^2_3$. Using once again the
commutative diagram and the fact that $\varpi_{\calT}$ and 
$\varpi_{\widetilde\calT}$ are onto, it follows that
$I_{\widetilde\calT} = I_{\calT}$.

\subsection{The singularities of $I_{\calT}(q)$}
\label{sub.singularities}

The singularities of the integrand of $I_{\calT,\emu,\elambda}(q)$ are
given by Lemma~\ref{lem.Gq}. To determine the singularities of the
meromorphic function $I_\calT$, perform one integral at a time and use
the next lemma.

\begin{lemma}
\label{lem.mero}
Suppose $f(z)$ is a meromorphic function of $z$ with singularities
on $q^{-\BN}$. Fix positive integers $a_1,\dots,a_p >0$ and 
$b_1,\dots,b_n >0$, let
$s=(s_1,\dots,s_p)$, $t=(t_1,\dots,t_n)$ and consider the integral:
$$ 
F(s,t)=\int_\calC \prod_{i=1}^p f(s_i z^{a_i}) \prod_{j=1}^n f(t_j z^{-b_j})
\frac{dz}{z}  
$$
where $\calC$ is a contour that separates $q^{-1-\BN}$ from $q^{\BN}$.
Then, $F(s,t)$ is a meromorphic function of $(s,t)$ with singularities at 
a subset of
\be
\label{eq.sitj}
\{ (s,t) \,\,| \,\, s_i^{b_j} t_j^{a_i} \in q^{-a_i \BN -b_j \BN}
\,\, \text{for some} \,\, 1 \leq i \leq p, 1 \leq j \leq n\}
\ee
\end{lemma}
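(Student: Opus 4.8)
The strategy is to trace where the contour $\mathcal C$ can be pinched as the parameters $(s,t)$ vary, since the only way $F(s,t)$ can fail to be holomorphic is by a collision of singularities of the integrand from the two sides of $\mathcal C$. Concretely, $f(s_i z^{a_i})$ is singular on the $z$-set where $s_i z^{a_i} \in q^{-\BN}$, i.e. $z \in s_i^{-1/a_i} q^{-\BN/a_i}$ (a finite union of rays indexed by $a_i$-th roots), and these sit ``outside'' $\mathcal C$; similarly $f(t_j z^{-b_j})$ is singular where $t_j z^{-b_j} \in q^{-\BN}$, i.e. $z \in t_j^{1/b_j} q^{\BN/b_j}$, sitting ``inside'' $\mathcal C$. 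As long as these two families of points stay on opposite sides of $\mathcal C$, one can isotope $\mathcal C$ to keep it away from all of them and $F(s,t)$ is holomorphic there; by Morera/Fubini this gives an honest holomorphic function on that open set. A singularity of $F$ can only be produced when an ``inside'' point and an ``outside'' point coincide (forcing a pinch that cannot be removed by isotopy). So the plan is: (i) set up this dichotomy precisely, (ii) identify the pinch locus, (iii) apply the residue theorem to continue past it and check the continuation is meromorphic, and (iv) read off that the pinch locus is exactly the set in~\eqref{eq.sitj}.

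\textbf{Step (i)--(ii): locating the pinch locus.} A pinch between $f(s_i z^{a_i})$ and $f(t_j z^{-b_j})$ occurs at a $z_0$ with $s_i z_0^{a_i} \in q^{-\BN}$ and $t_j z_0^{-b_j} \in q^{-\BN}$ simultaneously, say $s_i z_0^{a_i} = q^{-k}$ and $t_j z_0^{-b_j} = q^{-\ell}$ with $k,\ell \in \BN$. Eliminating $z_0$ by raising the first to the power $b_j$ and the second to the power $a_i$ and multiplying, $z_0^{a_i b_j}$ cancels and we get $s_i^{b_j} t_j^{a_i} = q^{-k b_j - \ell a_i} \in q^{-a_i\BN - b_j \BN}$, which is precisely the condition defining the set in~\eqref{eq.sitj}. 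Conversely, away from this set, for each pair $(i,j)$ no such $z_0$ exists, so after possibly a preliminary isotopy of $\mathcal C$ (using compactness of the relevant portions of the singular sets inside the annulus of interest) the contour separates $\bigcup_{j} \{t_j z^{-b_j}\in q^{-\BN}\}$ from $\bigcup_i \{s_i z^{a_i} \in q^{-\BN}\}$, and $F$ is holomorphic there.

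\textbf{Step (iii): meromorphy of the continuation.} To see that $F$ extends meromorphically (not just holomorphically off the bad set), fix a point $(s^*,t^*)$ on one component of~\eqref{eq.sitj} and approach it; finitely many inside-points $\zeta_\nu$ are about to collide with finitely many outside-points. Push $\mathcal C$ across each offending inside-point $\zeta_\nu$, picking up $2\pi i\sum_\nu \operatorname{Res}_{z=\zeta_\nu}$ of the integrand; the deformed-contour integral is holomorphic near $(s^*,t^*)$, while each residue term is a product of values of $f$ (and its $q$-difference iterates, via~\eqref{eq.xqinfty}-type relations implicit in ``$f$ meromorphic'') at arguments that are monomials in $s,t$, hence meromorphic in $(s,t)$. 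This exhibits $F$ near $(s^*,t^*)$ as a finite sum of meromorphic functions, so $F$ is meromorphic there. Since the bad set is locally a finite union of codimension-one analytic subsets, standard analytic-continuation gluing gives a single meromorphic function on all of $\BC^{p+n}_{\neq 0}$ whose polar locus is contained in~\eqref{eq.sitj}.

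\textbf{Main obstacle.} The delicate point is the bookkeeping in Step (iii): making sure that when several inside- and outside-points converge to the \emph{same} limit $z_0$ (which happens whenever two distinct pairs $(i,j)$, $(i',j')$ hit the same component of~\eqref{eq.sitj}, or when $a_i,b_j>1$ produce coincident roots), the residue terms are still individually meromorphic and the deformed contour genuinely avoids all singularities. The cleanest way around this is to work one variable at a time — deform only with respect to $s_i$ or $t_j$ while freezing the others generically — so that the colliding points are simple, apply the one-dimensional residue argument, and then invoke Hartogs/Riemann-extension to assemble the joint meromorphic function; I would also remark that applying the lemma repeatedly (once per integration variable of $I_\calT$) is exactly what yields Theorem~\ref{thm.1}(c), since each application turns singularities on $q$-rays into singularities on $q$-rays with rational slopes.
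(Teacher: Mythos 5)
Your proof is correct and takes essentially the same approach as the paper's: both locate the possible singularities of $F$ at the pinch locus of the contour $\calC$ between the sets $\{z^{a_i}\in s_i^{-1}q^{-\BN}\}$ and $\{z^{b_j}\in t_j q^{\BN}\}$, and eliminate $z_0$ by the identical exponent computation yielding $s_i^{b_j}t_j^{a_i}=q^{-kb_j-\ell a_i}$. Your Step (iii) on the meromorphy of the continuation via residues supplies detail that the paper leaves implicit.
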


\begin{proof}
The singularities of the integrand is the set $\Sigma^-(s) \cup \Sigma^+(t)$
where
$$
\Sigma^-(s) = \cup_{i=1}^p \{ z^{a_i} \in s_i^{-1} q^{-\BN} \} \,,
\qquad 
\Sigma^+(t)= \cup_{j=1}^n
\{ z^{b_j} \in t_jq^{\BN} \} \,. 
$$
As long as $\Sigma^-(s)\cup\Sigma^+(t)$ does not touch the contour $\calC$, 
$F(s,t)$ is regular. It follows that if $F(s,t)$ is singular when pinching 
occurs, in other words we must have $z^{a_i} = s_i^{-1} q^{-k}$ and 
$z^{b_j} = t_j q^{l}$ for some $i,j$ and some $k,l \in \BN$. Thus
$z^{a_i b_j} =  (s_i^{-1} q^{-k})^{b_j} = (t_j q^{l})^{a_i}$. The result follows.
\end{proof}


\section{Examples and computations}
\label{sub.compute}


  
\subsection{A non-1-efficient triangulation with two tetrahedra}
\label{sub.nonefficient}

It is traditional in hyperbolic geometry to illustrate theorems 
concerning ideally triangulated manifolds with the case of the standard
triangulation of the complement of the $4_1$ knot. In our examples, we will 
deviate from this principle and begin by giving a detailed computation of the 
state integral for the case of a non-1-efficient ideal triangulation 
with two tetrahedra. This illustrates Propositions 
\ref{prop.ABC}, \ref{prop.quad}, \ref{prop.ptheta}, \ref{prop.balance}
and Theorem~\ref{thm.1}, and also points out the inapplicability of
Theorem~\ref{thm.2}. 

Ideal triangulations can be efficiently described, constructed and
manipulated by \texttt{SnapPy} and \texttt{Regina} \cite{regina, snappy},
and we will follow their description below. In particular, ideal triangulations
can be uniquely reconstructed by their isometry signature, and the latter
is a string of letters and numbers.
There are exactly 10 ideal triangulations with two tetrahedra of manifolds 
with one cusp, given in Table 3 of \cite{Ga:normal}, 
9 of them are 1-efficient, and one of them with isometry signature 
\texttt{cPcbbbdei} is not. Although we will not use it, this triangulation 
is not ordered.
The underlying 3-manifold $M$ is the union of a $T(2,4)$ torus link with the
$T(1,3)$ (trefoil) torus knot.
In \texttt{Regina} format, the tetrahedron gluings of the triangulation
$\calT$ of \texttt{cPcbbbdei} are given by:
\begin{equation*}
\label{regina.gluings3}
\begin{array}{c|c|cccc|}
\text{tet} & \text{glued to} & (012) & (013) & (023) & (123) \\ \hline
0  & &   1 (130) &   1 (023) &   1 (021) &   1 (132) \\
1  & &   0 (032) &   0 (201) &   0 (013) &   0 (132) \\
\end{array}
\end{equation*}
The edges of $\calT$ are given by:
\begin{equation*}
\label{regina.gluings.edges3}
\begin{array}{c|c|cccccc|}
\text{tet} & \text{edge} &  01 & 02 & 03 & 12 & 13 & 23 \\ \hline
0  & &   0 &  0 &  0 &  0 &  1 &  1 \\
1  & &   0 &  0 &  0 &  1 &  0 &  1 \\
\end{array}
\end{equation*}
and the triangle faces of $\calT$ are given by:
\begin{equation*}
\label{regina.gluings.faces3}
\begin{array}{c|c|cccc|}
\text{tet} & \text{face} & 012 & 013 & 023 & 123 \\ \hline
0  & &   0 &  1 &  2 &  3 \\
1  & &   2 &  0 &  1 &  3 \\
\end{array}
\end{equation*}
The gluing equations, in \texttt{SnapPy} format and with the
\texttt{Regina} ordering of the edges, are given by:
\begin{equation*}
\label{gluing.faces3}
\begin{pmatrix}
1 & 1 & 2 & 1 & 2 & 1 \\ 
1 & 1 & 0 & 1 & 0 & 1 \\
0 & -1 & 0 & 1 & 0 & 0 \\ 
0 & 0 & 2 & 0 & 0 & 0 
\end{pmatrix} \,.
\end{equation*}
The $\Abar$, $\Bbar$ and $\Cbar$ matrices are given by:
\begin{equation*}
\Abar = \begin{pmatrix}
1 & 1 \\
1 & 1
\end{pmatrix}, \qquad
\Bbar = \begin{pmatrix}
1 & 2 \\
1 & 0
\end{pmatrix}, \qquad
\Cbar = \begin{pmatrix}
2 & 1 \\
0 & 1
\end{pmatrix} \,.
\end{equation*}
The holonomy of the meridian and the longitude are given by:
$$
\mu=-\b_0 +\a_1, \qquad \lambda=2 \g_0 \,.
$$ 
The $\ve$ variables are given by: 
\be
\label{eq.ex10}
\ve_0 =  \a_0 +\b_0 +2 \g_0 +\a_1+2\b_1+\g_1 -2\pi, 
\qquad 
\ve_1 = \a_0 + \b_0 + \a_1 + \g_1 -2\pi \,.  
\ee
Using $\a_i+\b_i+\g_i=\pi$ for $i=0,1$, we can solve the above equations
in terms of the variables $\a_0,\ve_0,\mu,\lambda$:
\begin{align}
\notag
\a_0 &= \a_0 & \a_1 &= \pi - \a_0 - \lambda/2 + \mu \\ \label{eq.ex10b}
\b_0 &= \pi - \a_0 - \lambda/2 & \b_1 &= \ve_0 - \lambda/2 \\ \notag
\g_0 &= \lambda/2 & \g_1 &= \a_0 - \ve_0 + \lambda - \mu \,.
\end{align}
This illustrates Proposition~\ref{prop.quad}.

If $x_i$ are the variables of the $i$-th edge for $i=0,1$ and $\th$ is a 
pre-angle structure, then 

\begin{align}
\label{eq.ex11}
B(\calT,x,\th)(q)&=c(q)^2 
G_q\left( (-q)^{\frac{\a_0}{\pi}} \frac{x_0 x_0}{x_0 x_1} \right)
G_q\left( (-q)^{\frac{\b_0}{\pi}} \frac{x_0 x_1}{x_0 x_0} \right)
G_q\left( (-q)^{\frac{\g_0}{\pi}} \frac{x_0 x_1}{x_0 x_1} \right)
\\ \notag & \qquad\quad
G_q\left( (-q)^{\frac{\a_1}{\pi}} \frac{x_0 x_1}{x_0 x_0} \right)
G_q\left( (-q)^{\frac{\b_1}{\pi}} \frac{x_0 x_1}{x_0 x_1} \right)
G_q\left( (-q)^{\frac{\g_1}{\pi}} \frac{x_0 x_0}{x_0 x_1} \right)
\\ \notag
&=c(q)^2
G_q\left( (-q)^{\frac{\a_0}{\pi}} \frac{x_0}{x_1} \right)
G_q\left( (-q)^{\frac{\b_0}{\pi}} \frac{x_1}{x_0} \right)
G_q\left( (-q)^{\frac{\g_0}{\pi}} \right)
\\ \notag & \qquad\quad
G_q\left( (-q)^{\frac{\a_1}{\pi}} \frac{x_1}{x_0} \right)
G_q\left( (-q)^{\frac{\b_1}{\pi}}  \right)
G_q\left( (-q)^{\frac{\g_1}{\pi}} \frac{x_0}{x_1} \right) \,.
\end{align}
When $\th$ is positive, the absolutely convergent state-integral 
is given by:

\begin{align}
\label{eq.ex13}
\Ipre_{\calT,\th}(q) &=
\frac{c(q)^2}{(2 \pi \im)^2} \int_{\BT^2}
G_q\left( (-q)^{\frac{\a_0}{\pi}} \frac{x_0}{x_1} \right)
G_q\left( (-q)^{\frac{\b_0}{\pi}} \frac{x_1}{x_0} \right)
G_q\left( (-q)^{\frac{\g_0}{\pi}} \right)
\\ \notag
& \quad\quad
G_q\left( (-q)^{\frac{\a_1}{\pi}} \frac{x_1}{x_0} \right)
G_q\left( (-q)^{\frac{\b_1}{\pi}}  \right)
G_q\left( (-q)^{\frac{\g_1}{\pi}} \frac{x_0}{x_1} \right) 
\frac{dx_0 dx_1}{x_0 x_1} \,.
\end{align}
After rescaling $x_0 \to x_0/x_1$, the integral is free of the $x_1$-variable
and is given by:
\begin{align}
\label{eq.ex14}
\Ipre_{\calT,\th}(q) &=
\frac{c(q)^2}{2 \pi \im} 
G_q\left( (-q)^{\frac{\g_0}{\pi}} \right)
G_q\left( (-q)^{\frac{\b_1}{\pi}}  \right)
\\ \notag
& \times \int_{\BT}
G_q\left( (-q)^{\frac{\a_0}{\pi}} x_0 \right)
G_q\left( (-q)^{\frac{\b_0}{\pi}} \frac{1}{x_0} \right)
G_q\left( (-q)^{\frac{\a_1}{\pi}} \frac{1}{x_0} \right)
G_q\left( (-q)^{\frac{\g_1}{\pi}} x_0 \right) 
\frac{dx_0}{x_0} \,.
\end{align}
Using equation \eqref{eq.ex11}, the above integral becomes:
\begin{align}
\label{eq.ex15}
\Ipre_{\calT,\th}(q) &=
\frac{c(q)^2}{2 \pi \im} 
G_q\left( (-q)^{\frac{\lambda}{2\pi}} \right)
G_q\left( (-q)^{\frac{\ve_0-\lambda/2}{\pi}}  \right)
\int_{\BT}
G_q\left( (-q)^{\frac{\a_0}{\pi}} x_0 \right)
\\ \notag & \quad
G_q\left( (-q)^{1+\frac{-\a_0-\lambda/2}{\pi}} \frac{1}{x_0} \right)
G_q\left( (-q)^{1+\frac{-\a_0-\lambda/2+\mu}{\pi}} \frac{1}{x_0} \right)
G_q\left( (-q)^{\frac{\a_0-\ve_0+\lambda-\mu}{\pi}} x_0 \right) 
\frac{dx_0}{x_0} \,. 
\end{align}
Applying the change of variables 
\be
\label{eq.ex15a}
(-q)^{\frac{\a_0}{\pi}} x_0=y_0 \,,
\ee 
the above integral becomes:
\begin{align}
\label{eq.ex16}
\Ipre_{\calT,\th}(q) &=
\frac{c(q)^2}{2 \pi \im} 
G_q\left( (-q)^{\frac{\lambda}{2\pi}} \right)
G_q\left( (-q)^{\frac{\ve_0-\lambda/2}{\pi}}  \right)
\\ \notag
& \times \int_{C_\a}
G_q\left( y_0 \right)
G_q\left( (-q)^{1-\frac{\lambda/2}{\pi}} \frac{1}{y_0} \right)
G_q\left( (-q)^{1+\frac{-\lambda/2+\mu}{\pi}} \frac{1}{y_0} \right)
G_q\left( (-q)^{\frac{-\ve_0+\lambda-\mu}{\pi}} y_0 \right) 
\frac{dy_0}{y_0} 
\end{align}
where $C_\a$ is the torus $|y_0|=|q^{\frac{\a_0}{\pi}}|$ and the integrand depends
on $\ve_0,\mu,\lambda$ and $y_0$. Notice that when $\th$ is positive, 
Equation~\eqref{eq.ex11} implies that $\ve_0 = \g_0+\b_1 >0$
and $\ve_1 = -\g_0 -\b_1 <0$. Moreover,  we can set $\ve_0=0$
and obtain the uniformly convergent balanced integral
\begin{align}
\label{eq.ex17}
\Ibal_{\calT,\th}(q) &=
\frac{c(q)^2}{2 \pi \im} 
G_q\left( (-q)^{\frac{\lambda}{2\pi}} \right)
G_q\left( (-q)^{\frac{-\lambda/2}{\pi}}  \right)
\\ \notag
& \times \int_{C_\a}
G_q\left( y_0 \right)
G_q\left( (-q)^{1-\frac{\lambda/2}{\pi}} \frac{1}{y_0} \right)
G_q\left( (-q)^{1+\frac{-\lambda/2+\mu}{\pi}} \frac{1}{y_0} \right)
G_q\left( (-q)^{\frac{\lambda-\mu}{\pi}} y_0 \right) 
\frac{dy_0}{y_0} 
\end{align}
which only depends on $(\emu,\elambda)=((-q)^{\mu/\pi},(-q)^{\lambda/(2\pi)})$.
To simplify notation further, let $(s,t)=(\emu,\elambda)$. 
Then, we get:

{\small
\begin{align}
\label{eq.ex18}
I_{\calT,s,t}(q) &=
\frac{c(q)^2}{2 \pi \im} 
G_q\left( t  \right)
G_q\left( t^{-1} \right)
\\ \notag & \times 
\int_{C}
G_q\left( y_0 \right)
G_q\left( (-q) t^{-1} y_0^{-1} \right)
G_q\left( (-q) s t^{-1} y_0^{-1} \right)
G_q\left( s^{-1} t^2 y_0 \right) 
\frac{dy_0}{y_0} 
\end{align}
}

\noindent
where $C$ is the torus $|y_0|=|q|^{\delta_0}$ for small $\delta_0>0$.
This is the meromorphic function of Theorem~\ref{thm.1}.
Next, we compute its singularities, starting from the singularities of the 
integrand. Using part (a) of Lemma~\ref{lem.psi0}, we see that the 
singularities of the integrand are given by
$$
y_0 \in \Sigma^-(s,t) \cup \Sigma^+(s,t) \,,
$$
where
$$
\Sigma^-(s,t) = q^{-\BN} \cup s t^{-2} q^{-\BN}, 
\qquad
\Sigma^+(s,t) = (-q) t^{-1} q^{\BN} \cup (-q) s t^{-1} q^{\BN} \,.
$$
The contour of integration has to separate $\Sigma^-(s,t)$ from 
$\Sigma^+(s,t)$. The integral can only be singular when pinching occurs, 
that is, for $(s,t)$ such that $\Sigma^-(s,t)$ intersects $\Sigma^+(s,t)$. 
This happens precisely when
\be
\label{eq.ScalT}
t \in -q^{\BZ\setminus\{0\}}
\qquad \text{or} \qquad s^{-1} t \in -q^{\BZ\setminus\{0\}} \,. 
\ee
Using the notation of the $q$-rays~\eqref{eq.rse}, the above set
is given by 
$$
-q \Sigma_{0,1} \cup -q^{-1} \Sigma_{0,-1}  \cup
-q \Sigma_{-1,1} \cup -q^{-1} \Sigma_{1,-1} \,.
$$
Thus, the above integral is a meromorphic function which is regular on the
complement of the set $\Sigma_{\calT}$. Note that some of the 
points of the set~\eqref{eq.ScalT} might be regular points of the integral,
this happens for instance when the residue at simple poles vanishes. 
Note also that
the set~\eqref{eq.ScalT} is disjoint from the real torus $|s|=|t|=1$, so the 
integral can be expanded into Laurent series convergent on the real torus 
$|s|=|t|=1$. However, the prefactor $G_q(t) G_q(t^{-1})$ has singularities
on the set $t \in q^{\BZ}$ (and those are actual, i.e., not removable), 
which prevent the meromorphic function
$I_{\calT,s,t}(q)$ from being expanded into Laurent series on the torus
$|s|=|t|=1$. In conclusion, the meromorphic function $I_{\calT,s,t}(q)$
is regular in the complement of the shifted $q$-rays
\be
\label{eq.ex110}
\Sigma_{0,1} \cup \Sigma_{0,-1} \cup
-q \Sigma_{0,1} \cup -q^{-1} \Sigma_{0,-1}  
-q \Sigma_{-1,1} \cup -q^{-1} \Sigma_{1,-1} \,.
\ee

\subsection{The $4_1$ knot}
\label{sub.41}


Next, we discuss the example of the $4_1$ knot, giving the first and
last steps of the above computations, and asking the reader to fill in
the intermediate steps. 

The gluing equations, in \texttt{SnapPy} format and with the
\texttt{Regina} ordering of the edges, are given by:
\begin{equation*}
\begin{pmatrix}
2 & 1 & 0 & 2 & 1 & 0 \\
0 & 1 & 2 & 0 & 1 & 2 \\
1 & 0 & 0 & 0 & 0 & -1 \\
1 & 1 & 1 & 1 & -1 & -3
\end{pmatrix} \,.
\end{equation*}
The above matrix determines the following information.
The $\Abar$, $\Bbar$ and $\Cbar$ matrices are given by:
\begin{equation*}
\Abar = \begin{pmatrix}
2 & 2 \\
0 & 0
\end{pmatrix}, \qquad
\Bbar = \begin{pmatrix}
1 & 1 \\
1 & 1
\end{pmatrix}, \qquad
\Cbar = \begin{pmatrix}
0 & 0 \\
2 & 2
\end{pmatrix} \,.
\end{equation*}
The holonomy of the meridian and the longitude are given by:
$$
\mu= \a_0-\g_1, \qquad \lambda=\a_0 +\b_0+\g_0+\a_1-\b_1-3\g_1 \,.
$$ 
The $\ve$ variables are given by:
$$
\ve_0 = 2\a_0 +\b_0 +2 \a_1 + \b_1  -2\pi, 
\qquad 
\ve_1 = \b_0 + 2 \g_0 + \b_1 +2 \g_1 -2\pi \,.  
$$
Using $\a_i+\b_i+\g_i=\pi$ for $i=0,1$, we can solve the above equations
in terms of the variables $\a_0,\ve_0,\mu,\lambda$:
\begin{align}
\notag
\a_0 &= \a_0 & \a_1 &=  \a_0 + \lambda/2 - \mu \\ \label{eq.ex411}
\b_0 &= \pi - 2 \a_0 + \ve_0 - \lambda/2 & \b_1 
&= \pi - 2 \a_0 - \lambda/2 + 2 \mu
\\ \notag
\g_0 &= \a_0 - \ve_0 + \lambda/2 & \g_1 &= \a_0 - \mu \,.
\end{align}
Let $\th$ be a positive semi-angle structure. Rescaling $x_0 \to x_0/x_1$,
applying the change of variables 
\be
\label{eq.ex412}
(-q)^{-\frac{\a_0}{\pi}} x_0=y_0 \,,
\ee
letting $(s,t)=(\emu,\elambda)$, and following the steps of 
the previous example we obtain that the state-integral is given by
\begin{align}
\label{eq.ex413}
\Ipre_{\calT_{4_1},\th}(q) &=
\frac{c(q)^2}{2 \pi \im} 
\int_{C_\a}
\frac{dy_0}{y_0} 
G_q\left( y_0^{-1} \right)
G_q\left( s^{-1} y_0^{-1} \right)
G_q\left( (-q)^{-\ve_0} t y_0^{-1} \right)
G_q\left( s^{-1} t y_0^{-1} \right)
\\ \notag
& \qquad \qquad \times 
G_q\left( -q (-q)^{\ve_0} t^{-1} y_0^2 \right)
G_q\left( -q s^2 t^{-1} y_0 \right)
\end{align}
where the contour of integration $C_\a$ (determined by~\eqref{eq.ex412}) is
given by $|y_0|=|q^{-\frac{\a_0}{\pi}}|$. Moreover, we can set $\ve_0=0$,  
and obtain the meromorphic function of the $4_1$ knot:

\begin{align}
\label{eq.ex414}
I_{\calT_{4_1},s,t}(q) &=
\frac{c(q)^2}{2 \pi \im} 
\int_{C}
\frac{dy_0}{y_0} 
G_q\left( y_0^{-1} \right)
G_q\left( s^{-1} y_0^{-1} \right)
G_q\left( t y_0^{-1} \right)
G_q\left( s^{-1} t y_0^{-1} \right)
\\ \notag
& \qquad \qquad \times 
G_q\left( -q t^{-1} y_0^2 \right)
G_q\left( -q s^2 t^{-1} y_0 \right) 
\end{align}
where $C$ is the torus $|y_0|=1^+$.

\subsection{The sister of the $4_1$ knot}
\label{sub.41s}


Next, we present the invariant for the sister \texttt{m003} of the $4_1$ knot. 
The gluing equations, in \texttt{SnapPy} format and with the
\texttt{Regina} ordering of the edges, are given by:

\begin{equation*}
\begin{pmatrix}
2 & 0 & 1 & 2 & 0 & 1 \\
0 & 2 & 1 & 0 & 2 & 1 \\
0 & -2 & 0 & 2 & 0 & 0  \\
0 & -1 & 0 & 2 & -1 & 0 
\end{pmatrix} \,.
\end{equation*}

Using the above matrix, we can compute the $\Abar$, $\Bbar$, $\Cbar$ matrices, 
the holonomy of the meridian and longitude, the $\ve$ variables, and express
all variables in terms of the variables $\a_0,\ve_0,\mu,\lambda$:
\begin{align}
\notag
\a_0 &= \a_0 & \a_1 &= \a_0 - \ve_0 + \lambda \\ \label{eq.ex41s1}
\b_0 &= \a_0 - \ve_0 + \lambda - \mu/2 & \b_1 &= \a_0 - \ve_0 + \mu/2
\\ \notag
\g_0 &= \pi - 2 \a_0 + \ve_0 - \lambda + \mu/2 & \g_1 &=  \pi - 2 \a_0 + 2 \ve_0 
- \lambda - \mu/2 \,.
\end{align}
Let $\th$ be a positive semi-angle structure. Rescaling $x_0 \to x_0/x_1$,
applying the change of variables 
\be
\label{eq.ex41s2}
(-q)^{\frac{\a_0}{\pi}} x_0=y_0 \,,
\ee
letting $(s,t)=(\emu,\elambda)$, and following the steps of 
the previous example we obtain that the state-integral is given by
\begin{align}
\label{eq.ex41s3}
\Ipre_{\calT_{m003},\th}(q) &=
\frac{c(q)^2}{2 \pi \im} 
\int_{C_\a}
\frac{dy_0}{y_0} 
G_q\left( (-q)^{1+2\ve_0} s^{-1/2} t^{-2} y_0^{-2} \right)
G_q\left( (-q)^{1+\ve_0} s^{1/2} t^{-2} y_0^{-2} \right)
G_q\left( y_0 \right)
\\ \notag
& \qquad \qquad \times 
G_q\left( (-q)^{-\ve_0} s^{1/2} y_0 \right)
G_q\left( (-q)^{-\ve_0} t^{2} y_0 \right)
G_q\left( (-q)^{-\ve_0} s^{-1/2} t^2 y_0 \right) \,.
\end{align}
where the contour of integration $C_\a$ (determined by~\eqref{eq.ex41s2}) is
given by $|y_0|=|(-q)^{\frac{\a_0}{\pi}}|$. Moreover, we can set $\ve_0=0$,  
and obtain the meromorphic function of the sister of the $4_1$ knot:

\begin{align}
\label{eq.ex41s4}
I_{\calT_{m003},s,t}(q) &=
\frac{c(q)^2}{2 \pi \im} 
\int_{C}
\frac{dy_0}{y_0} 
G_q\left( -q s^{-1/2} t^{-2} y_0^{-2} \right)
G_q\left( -q s^{1/2} t^{-2} y_0^{-2} \right)
G_q\left( y_0 \right)
G_q\left( s^{1/2} y_0 \right)
\\ \notag
& \qquad \qquad \times 
G_q\left( t^{2} y_0 \right)
G_q\left( s^{-1/2} t^2 y_0 \right) \,.
\end{align}
where $C$ is the torus $|y_0|=1^+$. 

\subsection{The unknot}
\label{sub.unknot}

Next, we compute the invariant for the unknot, and find a surprise: we 
can compute the integral exactly. The unknot has three triangulations with
two tetrahedra given in Table 3 of~\cite{Ga:normal}. One of them has 
isometry signature \texttt{cMcabbgds}. 
The gluing equations, in \texttt{SnapPy} format and with the
\texttt{Regina} ordering of the edges, are given by:

\begin{equation*}
\begin{pmatrix}
1 & 2 & 2 & 2 & 2 & 2 \\ 
1 & 0 & 0 & 0 & 0 & 0 \\
0 & 0 & 0 & 0 & 0 & -2 \\ 
1 & 0 & 0 & 0 & -1 & 0
\end{pmatrix} \,.
\end{equation*}
Following the steps of the previous examples, we have:
\be
\label{eq.exunknot0}
\ve_0 = \a_0 + 2\b_0 + 2\g_0 +2 \a_1 + 2\b_1 +2\g_1 -2\pi, \qquad \ve_1 =
\a_0 -2\pi 
\ee
which simplifies to $\ve_0 =-\a_0+2 \pi$ and in particular is bigger than 
$\pi$ when $\th$ is positive. We can express
all variables in terms of the variables $\a_0,\ve_0,\mu,\lambda$:
\begin{align}
\notag
\a_0 &= 2\pi - \ve_0 & \a_1 &= -\pi + \ve_0 + \lambda + \mu/2 
\\ \label{eq.exunknot1}
\b_0 &= \b_0 & \b_1 &= 2\pi - \ve_0 - \lambda \\ \notag
\g_0 &= -\pi - \b_0 + \ve_0 & \g_1 &= -\mu/2 \,.
\end{align}
After rescaling $x_0 \to x_0/x_1$, applying the change of variables 
\be
\label{eq.exunknot2}
(-q)^{\frac{-\b_0}{\pi}} x_0=y_0 \,,
\ee
letting $(s,t)=(\emu,\elambda)$, and following the steps of 
the previous example we obtain that the state-integral is given by
\begin{align}
\label{eq.exunknot3}
\Ipre_{\calT_{\text{unknot}},\th}(q) &=
\frac{c(q)^2}{2 \pi \im} 
G_q\left( (-q)^{2-\ve_0} \right) 
G_q\left( s^{-1/2} \right)
G_q\left( (-q)^{2-\ve_0} t^{-2} \right)
G_q\left( (-q)^{-1+\ve_0} s^{1/2} t^2 \right)
\\ \notag 
& \times 
\int_{C_\a}
\frac{dy_0}{y_0} 
G_q\left( -q^{-1} y_0 \right)
G_q\left( y_0^{-1} \right) \,.
\end{align}
where $C_\a$ is the torus $|y_0|=|(-q)^{-\frac{\b_0}{\pi}}|$.
On the other hand, Equations~\eqref{eq.Gq23} and~\eqref{eq.Gq2} give that
$$
G_q\left( -q^{-1} y_0 \right)
G_q\left( y_0^{-1} \right) = \frac{1}{(1+y_0)(1+q^{-1}y_0)} \,,
$$
so that
\begin{align}
\label{eq.exunknot3a}
\Ibal_{\calT_{\text{unknot}},\th}(q) &=
\frac{c(q)^2}{2 \pi \im} 
G_q\left( (-q)^{2-\ve_0} \right) 
G_q\left( s^{-1/2} \right)
G_q\left( (-q)^{2-\ve_0} t^{-2} \right)
G_q\left( (-q)^{-1+\ve_0} s^{1/2} t^2 \right)
\\ \notag 
& \times 
\int_{C_\a}
\frac{dy_0}{y_0} 
\frac{1}{(1+y_0)(1+q^{-1}y_0)} \,.
\end{align}
When $\th$ is positive, the contour $C_\a$ encircles both singularities 
$-1$ and $-q$ of the integrand, and a residue calculation reveals that 
the integral is zero. It follows that
\be
\label{eq.Iunknot}
I_{\calT_{\text{unknot}},s,t}(q)=0 \,.
\ee

\subsection{The trefoil}
\label{sub.31}

Next, we compute the invariant for the trefoil. 
The gluing equations, in \texttt{SnapPy} format and with the
\texttt{Regina} ordering of the edges, are given by:

\begin{equation*}
\begin{pmatrix}
1 & 0 & 0 & 0 & 1 & 0 \\
1 & 2 & 2 & 2 & 1 & 2 \\
0 & 0 & -1 & 1 & 0 & 0 \\ 
1 & 0 & -4 & 4 & -1 & 0 
\end{pmatrix} \,.
\end{equation*}

The state-integral is given by


{\small
\begin{align}
\label{eq.312}
\Ipre_{\calT_{3_1},\th}(q) &=
\frac{c(q)^2}{2 \pi \im} 
G_q\left( (-q)^{1+\ve_0/2} s^{2} t^{-1} \right)
G_q\left( (-q)^{1+\ve_0/2} s^{-2} t \right) \\ \notag & \times 
\int_{C_\a}
\frac{dy_0}{y_0} 
G_q\left( y_0^{-1} \right)
G_q\left( s^{-1} y_0^{-1} \right)
G_q\left( (-q)^{-\ve_0/2} s^3 t^{-1} y_0 \right)
G_q\left( (-q)^{-\ve_0/2} s^{-2} t y_0 \right) 
\end{align}
}

\noindent
where the contour of integration $C_\a$ is given by 
$|y_0|=|(-q)^{\frac{-\a_1}{\pi}}|$. Moreover, we can set $\ve_0=0$ and 
obtain the meromorphic function given by:


{\small
\begin{align}
\label{eq.313}
I_{\calT_{3_1},s,t}(q) &=
\frac{c(q)^2}{2 \pi \im} 
G_q\left( -q s^{2} t^{-1} \right)
G_q\left( -q s^{-2} t \right) \\ \notag & \times
\int_{C}
\frac{dy_0}{y_0} 
G_q\left( y_0^{-1} \right)
G_q\left( s^{-1} y_0^{-1} \right)
G_q\left( s^3 t^{-1} y_0 \right)
G_q\left( s^{-2} t y_0 \right) 
\end{align}
}
where $C$ is the torus $|y_0|=1^+$.

\subsection{The $5_2$ knot}
\label{sub.52}

Next, we present the invariant of the $5_2$ knot.
The gluing equations, in \texttt{SnapPy} format (we use
the homological longitude and the \texttt{Regina} ordering of the
edges), are given by:

\begin{equation*}
\left(
\begin{array}{ccccccccc}
1 & 1 & 0 & 1 & 0 & 0 & 1 & 1 & 0 \\ 
0 & 1 & 1 & 0 & 0 & 2 & 0 & 1 & 1 \\
1 & 0 & 1 & 1 & 2 & 0 & 1 & 0 & 1 \\
-1 & 0 & 0 & 0 & 0 & 1 & 0 & 0 & 0 \\ 
2 & 0 & -3 & 1 & 0 & -2 & 0 & 0 & 1
\end{array}
\right) \,.
\end{equation*}

After rescaling $x_i \to x_i/x_2$ and a change of variables 
$x_0 = (-q)^{\a_0} y_0$ and $x_1=(-q)^{-\a_1/2} s^{1/2} t^{1/2} y_1$, 
the state-integral is given by

{\small
\begin{align}
\label{eq.522}
\Ipre_{\calT_{5_2},\th}(q) &=
\frac{c(q)^3}{(2 \pi \im)^2} 
\int_{C_\a}
\frac{dy_0 dy_1}{y_0 y_1} 
G_q\left( y_0^{-1} \right)
G_q\left( s y_0^{-1} \right)
G_q\left( (-q)^{-\ve_1} s^2 y_0^{-1} \right)
\\ \notag & \times
G_q\left( -q s^{-2} t^{-1} t^{-1} y_0 y_1^{-2} \right)
G_q\left( (-q)^{1+\ve_0/4} y_0 y_1^{-1} \right)
G_q\left( (-q)^{1+3 \ve_0/4+\ve_1} s^{-3} t^{-1} y_0 y_1^{-1}\right)
\\ \notag
& \times
G_q\left( (-q)^{-\ve_0/4} y_1 \right)
G_q\left( (-q)^{-3\ve_0/4} s t y_1 \right)
G_q\left( s t y_1^{2}  \right) 
\end{align}
}

\noindent
where $C_\a$ is the torus given by 
$|y_0|=|(-q)^{\frac{-\a_0}{\pi}}|$ and $|y_1 s^{1/2} t^{1/2} | = 
|(-q)^{\frac{\a_1}{2\pi}}|$. 
Moreover, we can set $\ve_0=\ve_1=0$ and obtain the 
meromorphic function given by:


{\small
\begin{align}
\label{eq.523}
I_{\calT_{5_2},s,t}(q) &=
\frac{c(q)^3}{(2 \pi \im)^2} 
\int_{C}
\frac{dy_0 dy_1}{y_0 y_1} 
G_q\left( y_0^{-1} \right)
G_q\left( s y_0^{-1} \right)
G_q\left( s^2 y_0^{-1} \right)
G_q\left( -q s^{-2} t^{-1} t^{-1} y_0 y_1^{-2} \right)
\\ \notag & \times
G_q\left( -q y_0 y_1^{-1} \right)
G_q\left( -q s^{-3} t^{-1} y_0 y_1^{-1}\right)
G_q\left(  y_1 \right)
G_q\left(  s t y_1 \right)
G_q\left( s t y_1^{2}  \right) 
\end{align}
}

\noindent
where $C$ is the torus given by 
$|y_0|=1^+$ and $|y_1 s^{1/2} t^{1/2} | = 1^-$. 

\subsection{The $6_1$ knot}
\label{sub.61}

Finally, we present the invariant of the $6_1$ knot.
The gluing equations, in \texttt{SnapPy} format (we use
the homological longitude and the \texttt{Regina} ordering of the
edges), are given by:

\begin{equation*}
\left(
\begin{array}{cccccccccccc}
1 & 0 & 0 & 0 & 0 & 0 & 1 & 1 & 0 & 1 & 1 & 0 \\
0 & 1 & 0 & 1 & 0 & 0 & 1 & 0 & 0 & 0 & 0 & 1 \\
0 & 1 & 1 & 0 & 0 & 2 & 0 & 1 & 1 & 1 & 0 & 0 \\
1 & 0 & 1 & 1 & 2 & 0 & 0 & 0 & 1 & 0 & 1 & 1 \\
-1 & 0 & 0 & 0 & 0 & 1 & 0 & 0 & 0 & 0 & 0 & 0 \\
-1 & 1 & 0 & 0 & 1 & 1 & 0 & -1 & 0 & 0 & 0 & -2 
\end{array}
\right) \,.
\end{equation*}

After rescaling $x_i \to x_i/x_3$ and a change of variables 
\be
\label{eq.611}
x_0 = (-q)^{\a_2} y_0, \qquad x_1=(-q)^{\a_0} y_1, 
\qquad x_2=(-q)^{-\a_1/2} t^{1/2} y_2
\ee
the state-integral is given by


{\small
\begin{align}
\label{eq.612}
\Ipre_{\calT_{6_1},\th}(q) &=
\frac{c(q)^4}{(2 \pi \im)^3} 
\int_{C_\a}
\frac{dy_0 dy_1 dy_2}{y_0 y_1 y_2} 
G_q\left( y_0^{-1} \right)
G_q\left( y_1^{-1} \right)
G_q\left( s y_1^{-1} \right)
G_q\left( (-q)^{\ve_2} s^{-2}y_0^{-1} y_1 \right)
\\ \notag
& \times
G_q\left( -q s^{-1} t^{-1} y_1 y_2^{-2} \right)
G_q\left( (-q)^{1/2-\ve_0/4 + \ve_1/4} t^{-1} y_2^{-1}\right)
G_q\left( (-q)^{3/2 + \ve_0/4 + 3 \ve_1/4} y_0 y_2^{-1} \right)
\\ \notag
& \times
G_q\left( (-q)^{3/2 + 3 \ve_0/4 + \ve_1/4} t^{-1} y_0 y_1 y_2^{-1} \right)
G_q\left( (-q)^{-1/2 - 3 \ve_0/4 - \ve_1/4} t y_1^{-1} y_2 \right)
\\ \notag
& \times
G_q\left( (-q)^{1/2 + \ve_0/4 - \ve_1/4 - \ve_2} s^2 t y_0 y_1^{-1} y_2 \right)
G_q\left( (-q)^{-1/2 - \ve_0/4 - 3 \ve_1/4} y_0^{-1} y_1 y_2 \right)
G_q\left(  -q t y_2^2\right) 
\end{align}
}

\noindent
where $C_\a$ is the torus given by 
$|y_0|=|(-q)^{\frac{-\a_2}{\pi}}|$, $|y_1| = |(-q)^{\frac{-\a_0}{\pi}}|$ and
$|y_2 t^{1/2} | = |(-q)^{\frac{\a_1}{2\pi}}|$.
Moreover, we can set $\ve_0=\ve_1=\ve_2=0$ and obtain the 
meromorphic function given by:

{\small
\begin{align}
\label{eq.613}
I_{\calT_{6_1},s,t}(q) &=
\frac{c(q)^4}{(2 \pi \im)^3} 
\int_{C_\a}
\frac{dy_0 dy_1 dy_2}{y_0 y_1 y_2} 
G_q\left( y_0^{-1} \right)
G_q\left( y_1^{-1} \right)
G_q\left( s y_1^{-1} \right)
G_q\left( s^{-2}y_0^{-1} y_1 \right)
\\ \notag
& \times
G_q\left( -q s^{-1} t^{-1} y_1 y_2^{-2} \right)
G_q\left( (-q)^{1/2} t^{-1} y_2^{-1}\right)
G_q\left( (-q)^{3/2} y_0 y_2^{-1} \right)
\\ \notag
& \times
G_q\left( (-q)^{3/2} t^{-1} y_0 y_1 y_2^{-1} \right)
G_q\left( (-q)^{-1/2} t y_1^{-1} y_2 \right)
\\ \notag
& \times
G_q\left( (-q)^{1/2} s^2 t y_0 y_1^{-1} y_2 \right)
G_q\left( (-q)^{-1/2} y_0^{-1} y_1 y_2 \right)
G_q\left(  -q t y_2^2\right) 
\end{align}
}

\noindent
where $C$ is the torus given by 
$|y_0|=1^+$, $|y_1|=1^+$ and $|y_2 t^{1/2} | = 1^-$. 

\subsection*{Acknowledgements} 
The authors wish to thank R. Siejakowski for a careful reading of the
manuscript and for pointing out typographical errors. 
S.G. was supported in part by grant DMS-14-06419 of the US National Science 
Foundation. R.K. was supported in part by the Swiss National Science 
Foundation and the center of excellence grant “Center for quantum geometry 
of Moduli Spaces” of the Danish National Research Foundation.
The paper was presented by S.G. in a conference at the Max-Planck
Institute in Bonn (Modular Forms are Everywhere) in honor of D. Zagier's
65th birthday. S.G. wishes to thank the organizers, and particularly
K. Bringmann and D. Zagier for their invitation and hospitality. 


\appendix 

\section{A quantum dilogarithm over the LCA group 
$\BT \times \BZ$}
\label{sec.found}
The function~\eqref{eq.psi} with $q$ real has been introduced and studied 
in the functional analytic context of Hilbert spaces and quantum $E(2)$ 
group by Woronowicz in~\cite{Woro:equalities}. In this section, we derive 
some of its operator properties by using the theory of quantum dilogarithms 
over Pontryagin self-dual LCA groups developed 
in~\cite{AK:complex, Kashaev:YB}. 

Throughout the section, for a Hilbert space $H$, we write 
$A\colon H\to H$, if $A$ is a not necessarily bounded linear operator in 
$H$ whose domain is dense in $H$. The Hermitian conjugate of $A$ will be 
denoted $A^*$. Below, we will use freely the standard Dirac's bra-ket 
notation.

The group $\BT \times \BZ$ is a self-dual LCA group with the 
\emph{gaussian exponential}
\begin{equation}
\label{eq:gaus-exp}
\langle \cdot\rangle\colon \BT \times \BZ\to \BT,\quad
\langle z,m\rangle=z^m,\quad \text{for all} \quad (z,m)\in \BT \times \BZ.
\end{equation}
The \emph{Fourier kernel} is fixed as the co-boundary of the gaussian 
exponential
\begin{equation}
\langle z,m;w,n\rangle:= 
\frac{\langle zw,m+n\rangle}{\langle z,m\rangle\langle w,n\rangle}=z^nw^m.
\end{equation}
We define a unitary \emph{Fourier operator} 
$\fourier\colon  L^2(\BT \times \BZ)\to L^2(\BT \times \BZ)$ 
by the integral kernel
\begin{equation}
\langle z,m|\fourier| w,n\rangle=\langle z,m;w,n\rangle.
\end{equation}

For any (measurable) function $f\colon \BT \times \BZ\to \BC$, we associate 
three normal operators as follows. The multiplication operator by $f$:
\begin{equation}
f(\mypos)\colon  L^2(\BT \times \BZ)\to L^2(\BT \times \BZ),
\quad \langle z,m|f(\mypos)=f(z,m)\langle z,m|,
\end{equation}
its unitary conjugate by the Fourier operator
\begin{equation}
f(\mymom):=\fourier f(\mypos)\fourier^*,
\end{equation}
and the unitary conjugate of the latter by the inverse of the (unitary) 
multiplication operator $\langle\mypos\rangle$ by the gaussian
exponential~\eqref{eq:gaus-exp}:
\begin{equation}
f(\mymom+\mypos):=\langle\mypos\rangle^*f(\mymom)\langle\mypos\rangle.
\end{equation}
We remark that all three operators $f(\mypos)$, $f(\mymom)$ and 
$f(\mymom+\mypos)$ have spectrum given by the closure of the image of $f$.

\begin{lemma}
\label{lemma-fun-eta}
The function
\begin{equation}
\myfun\colon \BT \times \BZ\to \BC_{\ne0},\quad (z,m)\mapsto-zq^{1-m},
\end{equation}
satisfies the following operator equations:
\begin{equation}
\label{eq:eta-rel}
\myfun(\mypos)\myfun(\mymom)=q^2\myfun(\mymom)\myfun(\mypos),
\quad\myfun(\mypos)^*\myfun(\mymom)=\myfun(\mymom)\myfun(\mypos)^*,\quad 
\myfun(\mymom+\mypos)=-\myfun(\mymom)\myfun(\mypos).
\end{equation}
\end{lemma}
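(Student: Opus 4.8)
The plan is to work directly from the definitions of the three operators attached to $\myfun$ and to translate each operator identity in~\eqref{eq:eta-rel} into a statement about multiplication operators and their conjugates by $\fourier$ and by $\langle\mypos\rangle$. First I would record the two basic commutation rules on $L^2(\BT\times\BZ)$: writing $\myz$ for the multiplication-by-$z$ operator and $\im^{\mypos}$-type shift operators for the $\BZ$-component, the Fourier kernel $z^n w^m$ shows that conjugation by $\fourier$ interchanges, up to the obvious roles, multiplication in the position variables with multiplication in the momentum variables. Concretely, $\myfun(\mypos)$ acts by the function $(z,m)\mapsto -zq^{1-m}$ in the $z$- and $m$-variables, while $\myfun(\mymom)=\fourier\myfun(\mypos)\fourier^*$ acts by the same function in the dual variables; the key point is that the position variable $z\in\BT$ is conjugate to the shift in $m\in\BZ$, and the position variable $m\in\BZ$ is conjugate (via $w^m$) to the multiplication by $w\in\BT$ in the dual circle. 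So $\myfun(\mypos)$ and $\myfun(\mymom)$ are built from a Weyl-type pair, and the only nontrivial scalar that appears when commuting them past each other comes from commuting a multiplication operator past the shift it generates.

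For the first relation, $\myfun(\mypos)\myfun(\mymom)=q^2\myfun(\mymom)\myfun(\mypos)$, I would compute the integral kernel of both sides between bra-ket states $\langle z,m|\cdots|w,n\rangle$, insert the definition $\langle z,m|\fourier|w,n\rangle=z^nw^m$, and collect the factor. The operator $\myfun(\mypos)$ contributes $-zq^{1-m}$ acting on the left; $\myfun(\mymom)$, after moving the Fourier operators, contributes a factor involving $w$ and a unit shift in one of the discrete labels. Commuting the multiplication by $z$ (respectively $q^{-m}$) past that shift produces exactly a single factor of $q$ from each of the two "halves" of $\myfun$ (the $z$-part and the $q^{-m}$-part), giving $q^2$; this is the mechanism behind the quadratic, rather than linear, power of $q$, and it mirrors the $q^2$ appearing throughout the paper in the relation~\eqref{eq.3d2}. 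The second relation, $\myfun(\mypos)^*\myfun(\mymom)=\myfun(\mymom)\myfun(\mypos)^*$, should come out of the same kernel computation: replacing $\myfun(\mypos)$ by its adjoint conjugates $z\mapsto z^{-1}$ and $q^{1-m}\mapsto \bar q^{1-m}$ (equivalently inverts the shift), and one checks that the two $q$-factors now cancel rather than add, yielding genuine commutativity.

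The third relation, $\myfun(\mymom+\mypos)=-\myfun(\mymom)\myfun(\mypos)$, is where I expect the only real bookkeeping. By definition $\myfun(\mymom+\mypos)=\langle\mypos\rangle^*\myfun(\mymom)\langle\mypos\rangle$, where $\langle\mypos\rangle$ is multiplication by the gaussian exponential $z^m$. Conjugating the shift operators hidden inside $\myfun(\mymom)$ by this multiplication operator produces precisely the extra factor $z q^{1-m}$ — i.e. $\myfun(\mypos)$ itself — together with a sign $-1$; the sign is forced because $\myfun$ has the form $-(\,\cdot\,)$ and the conjugation contributes $\myfun(\mymom)$ times $\myfun(\mypos)$ but with one spurious minus sign from combining the two $-1$'s and the normalization. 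I would verify this by a one-line kernel computation identical in spirit to the previous two, being careful that the non-self-adjointness of $\langle\mypos\rangle^*$ versus $\langle\mypos\rangle^{-1}$ (they agree since $\langle\mypos\rangle$ is unitary) does not introduce a stray conjugation on $q$. The main obstacle, such as it is, is purely notational: keeping straight which of the four circle/integer variables is conjugate to which under $\fourier$, and tracking the single power of $q$ generated each time a multiplication operator is commuted past the shift it induces. Once the commutation dictionary is set up, all three identities in~\eqref{eq:eta-rel} fall out of the same short computation with the Fourier kernel.
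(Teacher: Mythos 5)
Your strategy is the same as the paper's: reduce everything to the commutation relations of a multiplication/shift pair on each tensor factor of $L^2(\BT\times\BZ)$. The paper makes this dictionary explicit by introducing $\myh,\myz$ on $L^2(\BZ)$ and $\myH,\myZ$ on $L^2(\BT)$ with $[\myh,\myz]=\myz$ and $[\myH,\myZ]=\myZ$, computing how $\fourier$ and $\langle\mypos\rangle$ conjugate them, and reading off $\myfun(\mypos)=-\myZ q^{1-\myh}$, $\myfun(\mymom)=-\myz^{-1}q^{1-\myH}$, $\myfun(\mymom+\mypos)=-\myz^{-1}\myZ q^{1-\myH-\myh}$; all three relations are then one-line consequences. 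Your kernel computations would arrive at the same normal forms.

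As written, however, this is a plan rather than a proof --- each verification is deferred with ``I would compute'' --- and two of your heuristic claims need repair before the plan closes. First, the $q^2$ has nothing to do with \eqref{eq.3d2} (that is the substitution $q\mapsto q^2$ in the tetrahedron index); it arises because the two orderings differ by $q^{+1}$ (commuting $q^{1-\myh}$ past $\myz^{-1}$) versus $q^{-1}$ (commuting $q^{1-\myH}$ past $\myZ$), a net ratio of $q^{2}$. Second, and more substantively, your account of the adjoint relation is off: $\myfun(\mypos)^{*}=-\myZ^{-1}\bar q^{\,1-\myh}$, and the same bookkeeping gives $\myfun(\mypos)^{*}\myfun(\mymom)=\bar q\,Y$ while $\myfun(\mymom)\myfun(\mypos)^{*}=q\,Y$ for the same normally ordered $Y$. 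The two scalars do not ``cancel''; the sides agree because $q$ is real in this operator-theoretic (Woronowicz) setting --- the same hypothesis that makes $\mymu(1/z,m)=\overline{\mymu(z,m)}$ and $\phi_q$ unimodular later in the appendix --- and for non-real $q$ the second identity in \eqref{eq:eta-rel} genuinely fails, so you must invoke this explicitly. Your sign analysis for the third relation is fine: the two built-in minus signs of $\myfun(\mymom)$ and $\myfun(\mypos)$ cancel, one commutation absorbs the stray power of $q$, and the single remaining $-1$ is exactly the one carried by $\myfun(\mymom+\mypos)$.
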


\begin{proof}
In the Hilbert space $L^2(\BZ)$, define a self-adjoint operator $\myh$ by
\begin{equation}
\langle m|\myh=m\langle m|,\quad \text{for all} \quad m\in \BZ,
\end{equation}
and a unitary operator $\myz$ by
\begin{equation}
\langle m| \myz=\langle m-1|,\quad \text{for all} \quad m\in \BZ.
\end{equation}
These operators satisfy the commutation relation
\begin{equation}
[\myh,\myz]:=\myh\myz-\myz\myh=\myz
\end{equation}
which is verified as follows:
\begin{equation}
\langle m|[\myh,\myz]=\langle m|\myh\myz-\langle m|
\myz\myh=m\langle m-1|-(m-1)\langle m-1|=\langle m|\myz.
\end{equation}
Similarly, in the Hilbert space $L^2(\BT)$, we define a self-adjoint operator 
$\myH$ by
\begin{equation}
\langle z|\myH=z\frac{\partial}{\partial z}\langle z|,\quad \text{for all} 
\quad z\in \BT,
\end{equation}
and a unitary operator $\myZ$ by
\begin{equation}
\langle z| \myZ=z\langle z|,\quad \text{for all} \quad z\in \BT.
\end{equation}
It is easily verified that
\begin{equation}
[\myH,\myZ]:=\myH\myZ-\myZ\myH=\myZ.
\end{equation}
Moreover, if $\myi\colon L^2(\BZ)\to L^2(\BT)$ is the isomorphism defined 
by the integral kernel
\begin{equation}
\langle z|\myi|m\rangle=z^m,\quad \text{for all} \quad (z,m)\in \BT\times\BZ,
\end{equation}
then we have the equalities
\begin{equation}
\myi \myh=\myH\myi,\quad \myi \myz=\myZ\myi.
\end{equation}
Identifying $\myH,\myZ,\myh,\myz$  with their natural counterparts in 
$L^2(\BT\times\BZ)$, we have
\begin{equation}
\fourier\myh=\myH\fourier,\quad\fourier\myz=\myZ\fourier,
\quad\fourier\myH=-\myh\fourier,\quad\fourier\myZ=\myz^{-1}\fourier,
\end{equation}
and
\begin{equation}
\myh\langle\mypos\rangle=\langle\mypos\rangle\myh,
\quad \myZ\langle\mypos\rangle=\langle\mypos\rangle\myZ,\quad
\myH\langle\mypos\rangle=\langle\mypos\rangle(\myH+\myh),
\quad \myz\langle\mypos\rangle=\langle\mypos\rangle\myz\myZ^{-1}
\end{equation}
so that
\begin{equation}
f(\mypos)=f(\myZ,\myh),\quad f(\mymom)=f(\myz^{-1},\myH),
\quad f(\mymom+\mypos)=f(\myz^{-1}\myZ,\myH+\myh)
\end{equation}
for any $f\colon  \BT \times \BZ\to \BC$, where, in the right hand sides, 
the functions with operator arguments are understood in the spectral sense. 
In the case of $f=\eta$, we thus have
\begin{equation}
\myfun(\mypos)=-\myZ q^{1-\myh},\quad \myfun(\mymom)=-\myz^{-1} q^{1-\myH},
\quad \myfun(\mymom+\mypos)=-\myz^{-1} \myZ q^{1-\myH-\myh}
\end{equation}
and the relations~\eqref{eq:eta-rel} are verified straightforwardly.
\end{proof}
Next, observe that the function
\begin{equation}
\mymu\colon  \BT \times \BZ\to \BC,\quad 
(z,m)\mapsto \poc{\myfun(z,m);q^2}{\infty},
\end{equation}
nowhere vanishes and satisfies the operator five term identity
\begin{equation}
\label{eq:oper5pq}
\mymu(\mymom)\mymu(\mypos)=\mymu(\mypos)\mymu(\mymom+\mypos)\mymu(\mymom)
\end{equation}
as a consequence of Lemma~\ref{lemma-fun-eta} and the formal power series 
identity in non-commuting indeterminates
\begin{equation}
\label{eq:oper5}
\poc{\myv;q^2}{\infty}\poc{\myu;q^2}{\infty}
=\poc{\myu;q^2}{\infty}\poc{-\myv\myu;q^2}{\infty}\poc{\myv;q^2}{\infty},
\quad \myu\myv=q^2\myv\myu,
\end{equation}
which is equivalent to the $q$-binomial formula
\begin{equation}
\sum_{n=0}^\infty\frac{\poc{a;q^2}{n}}{\poc{q^2;q^2}{n}}z^n
=\frac{\poc{az;q^2}{\infty}}{\poc{z;q^2}{\infty}},
\end{equation}
see~\cite{Kashaev2004}. Indeed, by substituting $\myu$ by $\eta(\mypos)$ 
and $\myv$ by $\eta(\mymom)$, we convert \eqref{eq:oper5} into 
\eqref{eq:oper5pq}. Lemma~\ref{lemma-fun-eta} also implies that the function
\begin{equation}
\label{eq:qdlTZ}
\phi_q\colon  \BT \times \BZ\to \BT,\quad 
(z,m)\mapsto \mymu(z,m)/\mymu(1/z,m)=\mymu(z,m)/\overline{\mymu(z,m)},
\end{equation}
satisfies the unitarized version of identity~\eqref{eq:oper5pq}:
\begin{equation}
\label{eq:oper5pqunitary}
\phi_q(\mymom)\phi_q(\mypos)=\phi_q(\mypos)\phi_q(\mymom+\mypos)\phi_q(\mymom).
\end{equation}
Moreover, $\phi_q$ satisfies an  inversion relation, see below 
\eqref{eq:phi-ir}, which allows us to identify it as an example of a 
quantum dilogarithm over $\BT \times \BZ$.

\begin{lemma}
The function \eqref{eq:qdlTZ} satisfies the following inversion relation
\begin{equation}
\label{eq:phi-ir}
\phi_q(z,m)\phi_q(1/z,-m)=z^m=\langle z,m\rangle,\quad 
\text{for all} \quad (z,m)\in \BT \times \BZ.
\end{equation}
\end{lemma}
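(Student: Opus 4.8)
The plan is to reduce the inversion relation~\eqref{eq:phi-ir} to a finite telescoping product of Pochhammer symbols. The first step is to make~\eqref{eq:qdlTZ} explicit: since $\mymu(z,m)=\poc{-zq^{1-m};q^2}{\infty}$, we get $\phi_q(z,m)=\poc{-zq^{1-m};q^2}{\infty}/\poc{-z^{-1}q^{1-m};q^2}{\infty}$, which is exactly the reciprocal $1/\psi(z,m)$ of the quantum dilogarithm~\eqref{eq.psi}. In particular the claimed identity is a purely algebraic statement about Pochhammer symbols, valid for all $z\in\BC^*$, so nothing from the operator-theoretic setup is needed here. Moreover it is enough to treat $m\ge 0$: replacing $(z,m)$ by $(1/z,-m)$ merely interchanges the two factors on the left and turns the right-hand side $z^m$ into $(1/z)^{-m}$, so the case $m<0$ follows from the case $-m>0$.

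Fix then $m\ge 0$ and $z\in\BC^*$. Writing out both factors, the left-hand side of~\eqref{eq:phi-ir} equals
\[
\frac{\poc{-zq^{1-m};q^2}{\infty}}{\poc{-z^{-1}q^{1-m};q^2}{\infty}}\cdot
\frac{\poc{-z^{-1}q^{1+m};q^2}{\infty}}{\poc{-zq^{1+m};q^2}{\infty}}.
\]
Iterating the functional equation $\poc{x;q^2}{\infty}=(1-x)\poc{q^2x;q^2}{\infty}$ of Lemma~\ref{lem.xqinfty}(b) exactly $m$ times gives, for any $x$, the telescoping identity $\poc{-xq^{1-m};q^2}{\infty}=\bigl(\prod_{j\in S}(1+xq^{j})\bigr)\poc{-xq^{1+m};q^2}{\infty}$, where $S:=\{\,1-m,\,3-m,\,\dots,\,m-1\,\}$ is the set of the $m$ integers between $1-m$ and $m-1$ of the same parity as $m-1$. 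Applying this with $x=z$ to the first fraction and with $x=z^{-1}$ to the second, the four infinite products cancel in two pairs and we are left with the finite product $\prod_{j\in S}\dfrac{1+zq^{j}}{1+z^{-1}q^{j}}$.

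The last step is to evaluate this product using the two features of $S$: it is invariant under $j\mapsto -j$, and $\sum_{j\in S}j=0$. Reindexing the denominator by $j\mapsto -j$ rewrites the product as $\prod_{j\in S}\dfrac{1+zq^{j}}{1+z^{-1}q^{-j}}$, and the elementary factorization $1+z^{-1}q^{-j}=z^{-1}q^{-j}(1+zq^{j})$ collapses each factor to $zq^{j}$. Hence the product equals $\prod_{j\in S}zq^{j}=z^{|S|}\,q^{\sum_{j\in S}j}=z^{m}$, which is precisely the assertion.

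I do not anticipate a genuine obstacle: once one notices $\phi_q=1/\psi$ and the telescoping, the argument is bookkeeping, and the only points requiring a little care are the reduction to $m\ge 0$ and the parity/symmetry of the index set $S$. If one prefers to avoid even these, an alternative is to observe that the $q$-difference equation for $\mymu$ forces $F(z,m):=\phi_q(z,m)\phi_q(1/z,-m)$ to satisfy $F(z,m)=z^{2}F(z,m-2)$ for all $m$, and then to check the two base cases $F(z,0)=1$ and $F(z,1)=z$ by the same cancellation of Pochhammer symbols; the identity for all $m\in\BZ$ follows.
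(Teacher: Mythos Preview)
Your proof is correct. It takes a genuinely more elementary route than the paper's. The paper recognizes the product $\phi_q(z,m)\phi_q(1/z,-m)$ as the ratio of Jacobi theta functions $\theta_q(zq^{-m})/\theta_q(zq^{m})$ via the triple product identity~\eqref{eq.theta_q}, and then invokes the functional equation~\eqref{eq:fe-theta} to obtain $z^m$ in one stroke, uniformly in the sign of $m$. You instead telescope the infinite Pochhammer products directly to the finite product $\prod_{j\in S}(1+zq^j)/(1+z^{-1}q^j)$ and collapse it using the symmetry $S=-S$. Your argument is entirely self-contained and avoids importing the theta-function machinery; the paper's is shorter and sign-of-$m$ agnostic once that machinery (already used in Section~\ref{sub.QDL}) is in hand. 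The alternative $q$-difference induction you sketch at the end is also valid and is essentially the functional-equation argument unwound by two steps at a time.
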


\begin{proof}
We have
\begin{multline}
\phi_q(z,m)\phi_q(1/z,-m) =\frac{\poc{-q^{1-m}z;q^2}{\infty}
\poc{-q^{1+m}/z;q^2}{\infty}}{\poc{-q^{1-m}/z;q^2}{\infty}
\poc{-q^{1+m}z;q^2}{\infty}}\\
=\frac{\theta_{q}(z q^{-m})}{\theta_{q}(z q^{m})}=
\frac{\theta_{q}(q^{-2m}z q^{m})}{\theta_{q}(z q^{m})}
=q^{-m^2}\left(zq^m\right)^m=z^m
\end{multline}
where in the second equality we have used \eqref{eq.theta_q} and in the 
forth equality the functional equation~\eqref{eq:fe-theta}.
\end{proof}


\section{The quantum dilogarithm and the Beta pentagon relation}

In this section, we generalize the result of \cite{AK:complex} to include 
the LCA groups which do not admit division by $2$. Note that most of the
equations of this section (for instance, \eqref{eq:faddeev-type-int}, 
\eqref{eq:faddeev-type-intc}, \eqref{eq:int-id}, \eqref{eq:int-b-id}) 
are valid as distributions. 

Let $A$ be a self-dual LCA group with a gaussian exponential 
$\langle\cdot\rangle \colon A\to \BT$ and the Fourier kernel
\begin{equation}
\langle x;y\rangle=
\frac{\langle x+y\rangle}{\langle x\rangle\langle y\rangle},
\quad \text{for all} \quad (x,y)\in A^2.
\end{equation}
For a non-negative integer $n \in \BN$, denote
\begin{equation}
[n]:=\BZ_{\ge0}\cap\BZ_{\le n},\quad \text{for all} \quad n\in\BZ_{\ge0} \,.
\end{equation}
According to~\cite{AK:complex}, a bounded function
\begin{equation}
f\colon [4]\times A\to \BC,\quad (i,x)\mapsto f_i(x),
\end{equation}
is called of \emph{Faddeev type} if it satisfies the non-constant version 
of the operator pentagon relation
\begin{equation}
f_1(\mymom)f_3(\mypos)=f_4(\mypos)f_2(\mymom+\mypos)f_0(\mymom) \,.
\end{equation}
The latter is equivalent to the functional integral identity
\begin{equation}
\label{eq:faddeev-type-int}
\tilde f_1(x)\tilde f_3(u) \langle x;u\rangle =
\int_A \tilde f_4(u-z)\tilde f_2(z)\tilde f_0(x-z)\langle z\rangle 
\operatorname{d}\!z,\quad \text{for all} \quad (x,u)\in A^2,
\end{equation}
where
\begin{equation}
\tilde f_i(x):=
(\fourier^{-1} f_i)(x)=\int_A \langle x;-y\rangle f_i(y)\operatorname{d}\!y.
\end{equation}
Defining
\begin{equation}
\hat f_i(x):= \tilde f_i(x)\langle x\rangle,
\end{equation}
we rewrite \eqref{eq:faddeev-type-int} in the form
\begin{equation}
\label{eq:faddeev-type-intc}
\hat f_1(x)\hat f_3(u) =
\int_A \langle x-z;z-u\rangle \hat f_4(u-z)\hat f_2(z)
\hat f_0(x-z)\operatorname{d}\!z,\quad\text{for all} \quad (x,u)\in A^2,
\end{equation}
We call a subgroup $B\subset A$ \emph{isotropic} if it satisfies the condition
\begin{equation}
\langle b;b'\rangle=1, \quad \text{for all} \quad (b,b')\in B^2.
\end{equation}
\begin{lemma}
Let a bi-character $\chi\colon A^2\to \BT$ be such that 
\begin{equation}
\langle x;y\rangle=\chi(x,y)\chi(y,x),\quad \text{for all} \quad (x,y)\in A^2,
\end{equation}
and $B\subset A$ an isotropic subgroup. Then, for any  function $f_i(x)$ 
of Faddeev type, the function
\begin{equation}
g\colon [4]\times A^2\to\BC,\quad (i,x,y)\mapsto 
g_i(x,y):=\chi(x,y)\int_B\hat f_i(x+b)\langle b;y\rangle\operatorname{d}\!b
\end{equation}
is automorphic, i.e., satisfies
\begin{multline}
\label{eq:autom}
g_i(x+b,y)=\chi(-y,b)g_i(x,y),\quad g_i(x,y+b)=\chi(x,b)g_i(x,y),\\ 
\text{for all} \quad (i,b,x,y)\in [4]\times B\times A^2,
\end{multline}
and satisfies the integral identity
\begin{equation}
\label{eq:int-id}
g_1(x,y)g_3(u,v)=
\int_{A/B}g_4(u-z,v+z-x) g_2(z,y+v)g_0(x-z,y+z-u)\operatorname{d}\! z
\end{equation}

\begin{proof} 
The automorphicity properties~\eqref{eq:autom} are verified in a 
straightforward manner, 
while to derive the integral identity~\eqref{eq:int-id}, we write 
\begin{multline}
\frac{g_1(x,y)g_3(u,v)}{\chi (x,y)\chi(u,v)}\\
=\int_{A\times B^2} \langle x+b-z;z-u-c\rangle\langle b;y\rangle
\langle c;v\rangle \hat f_4(u+c-z)\hat f_2(z)\hat f_0(x+b-z)
\operatorname{d}(z,b,c)\\
=\int_{A\times B^2} \langle x-z;z-u\rangle\langle b;y+z-u\rangle
\langle c;v+z-x\rangle \hat f_4(u+c-z)\hat f_2(z)\hat f_0(x+b-z)
\operatorname{d}(z,b,c)\\
=\int_{A}\frac{ \langle x-z;z-u\rangle g_4(u-z,v+z-x)
\hat f_2(z)g_0(x-z,y+z-u)}{ \chi(u-z,v+z-x)\chi(x-z,y+z-u)}
\operatorname{d}\!z\\
=\int_{A}\frac{g_4(u-z,v+z-x)\hat f_2(z)g_0(x-z,y+z-u)}{ 
\chi(u-z,v)\chi(x-z,y)}\operatorname{d}\!z
\end{multline}
so that
\begin{multline}
 g_1(x,y)g_3(u,v)=\int_{A}\chi(z,y+v)g_4(u-z,v+z-x)\hat f_2(z)g_0(x-z,y+z-u)
\operatorname{d}\!z\\
=\int_{(A/B)\times B}\chi(z+b,y+v)g_4(u-z-b,v+z+b-x)\hat f_2(z+b)
g_0(x-z-b,y+z+b-u)\operatorname{d}(z,b)\\
=\int_{(A/B)\times B}\chi(z+b,y+v)\chi(v+y,b)g_4(u-z,v+z-x)
\hat f_2(z+b)g_0(x-z,y+z-u)\operatorname{d}(z,b)\\
=\int_{A/B}g_4(u-z,v+z-x) g_2(z,y+v)g_0(x-z,y+z-u)\operatorname{d}\! z
\end{multline}
\end{proof}

\noindent
Now, we point out that the integral identity~\eqref{eq:int-id} is an equivalent 
form of the automorphic Beta pentagon identity. Namely, if we define
\begin{equation}
\phi_i(x,y):=g_i(-y,x+y)\quad \Leftrightarrow\quad g_i(x,y)=\phi_i(x+y,-x),
\end{equation}
then
\begin{equation}
\label{eq:int-b-id}
\phi_1(x,y)\phi_3(u,v)=\int_{A/B}\phi_4(u+y,v-z) \phi_2(x+y+u+v-z,z)
\phi_0(x+v,y-z)\operatorname{d}\! z \,.
\end{equation}
\end{lemma}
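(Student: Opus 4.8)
The plan is to prove both assertions of the lemma — the automorphicity \eqref{eq:autom} and the integral identity \eqref{eq:int-id} — by direct manipulation of the defining formula $g_i(x,y)=\chi(x,y)\int_B\hat f_i(x+b)\langle b;y\rangle\,\mathrm{d}b$, using only four ingredients: that $\langle\cdot;\cdot\rangle$ is a symmetric bi-character; that $\chi$ is a bi-character with $\langle x;y\rangle=\chi(x,y)\chi(y,x)$; that $B$ is isotropic, so that $\langle b;b'\rangle=1$ and hence $\chi(b,b')\chi(b',b)=1$ and $\chi(b,b)^2=1$ for $b,b'\in B$; and the Faddeev-type identity in the form \eqref{eq:faddeev-type-intc} for $\hat f_i$ (the operator pentagon and the two integral forms \eqref{eq:faddeev-type-int}, \eqref{eq:faddeev-type-intc} being interchangeable via a Fourier transform). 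All interchanges of integration below are to be understood in the distributional sense, as in \cite{AK:complex}.

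First I would dispatch \eqref{eq:autom}. For the shift $x\mapsto x+b$, I translate the internal integration variable by $-b$ over $B$, use $\langle b'-b;y\rangle=\langle b';y\rangle\langle b;y\rangle^{-1}$, and simplify the prefactor: $\chi(x+b,y)\langle b;y\rangle^{-1}=\chi(x,y)\chi(b,y)\langle b;y\rangle^{-1}=\chi(x,y)\chi(y,b)^{-1}=\chi(x,y)\chi(-y,b)$, which gives $g_i(x+b,y)=\chi(-y,b)g_i(x,y)$. For the shift $y\mapsto y+b$, I use $\langle b';y+b\rangle=\langle b';y\rangle\langle b';b\rangle$ and kill $\langle b';b\rangle=1$ by isotropy; only the prefactor changes, $\chi(x,y+b)/\chi(x,y)=\chi(x,b)$, so $g_i(x,y+b)=\chi(x,b)g_i(x,y)$. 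Both are short computations.

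The core of the argument is \eqref{eq:int-id}. I would start from $g_1(x,y)g_3(u,v)$, write each $g$ as its defining $B$-integral so that, up to the prefactor $\chi(x,y)\chi(u,v)$, one has $\int_{B^2}\hat f_1(x+b)\hat f_3(u+c)\langle b;y\rangle\langle c;v\rangle\,\mathrm{d}(b,c)$, then substitute \eqref{eq:faddeev-type-intc} with $x\mapsto x+b$ and $u\mapsto u+c$, introducing an $A$-integration in a new variable $z$. Expanding $\langle x+b-z;z-u-c\rangle$ by bi-additivity, discarding $\langle b;-c\rangle=1$ and absorbing $\langle b;y\rangle$ and $\langle c;v\rangle$, regroups the kernel as $\langle x-z;z-u\rangle\langle b;y+z-u\rangle\langle c;v+z-x\rangle$; the $b$- and $c$-integrals then reassemble, by the very definition of $g_i$, into $g_0(x-z,y+z-u)/\chi(x-z,y+z-u)$ and $g_4(u-z,v+z-x)/\chi(u-z,v+z-x)$, leaving a single $z$-integral over $A$. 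Its surviving cocycle factor, $\langle x-z;z-u\rangle\,\chi(x,y)\chi(u,v)/\bigl(\chi(u-z,v+z-x)\chi(x-z,y+z-u)\bigr)$, I expect to simplify to $\chi(z,y+v)$ on using bi-additivity of $\chi$ together with $\langle a;a'\rangle=\chi(a,a')\chi(a',a)$ at $a=x-z$, $a'=z-u$. Finally, to pass from $\int_A$ to $\int_{A/B}$ I write $\int_A=\int_{A/B}\int_B$, translate $z\mapsto z+b$, push $b$ through $g_4$ and $g_0$ by \eqref{eq:autom} (leaving $\hat f_2(z+b)$ untouched), and check that the accumulated prefactor collapses to $\chi(z,y+v)\langle b;y+v\rangle$, so that the inner $B$-integral is precisely $g_2(z,y+v)$; this yields \eqref{eq:int-id}.

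The hard part will be purely the bookkeeping: tracking the many bi-character factors through the two substitutions and the final $A/B$-reduction, in particular verifying the displayed cocycle simplification and that the two $\chi(-b,b)=\chi(b,b)^{-1}$ anomalies produced in the last step cancel against $\chi(b,b)^2=1$, the remaining bi-character arguments summing to $y+v$. This is the one place where isotropy of $B$ is used in an essential way, and not merely to simplify $\langle b;b'\rangle$; crucially, division by $2$ is never invoked, which is exactly what makes the argument cover $A=\BT\times\BZ$ with $B=\{1\}\times\BZ$, going beyond \cite{AK:complex}. Once \eqref{eq:int-id} is established, the concluding Beta pentagon form \eqref{eq:int-b-id} is purely formal: substitute $g_i(x,y)=\phi_i(x+y,-x)$ into \eqref{eq:int-id} and relabel the integration variable $z$ to read it off.
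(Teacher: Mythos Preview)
Your proposal is correct and follows essentially the same route as the paper's own proof: start from the product $g_1g_3$, insert the Faddeev-type identity \eqref{eq:faddeev-type-intc} at the shifted arguments, expand the Fourier kernel by bi-additivity and reassemble the $b$- and $c$-integrals into $g_0$ and $g_4$, simplify the residual bi-character factor to $\chi(z,y+v)$, and then split $\int_A=\int_{A/B}\int_B$ using automorphicity to reconstitute $g_2$. You are in fact slightly more explicit than the paper in two places --- the verification of \eqref{eq:autom} and the tracking of the $\chi(b,b)^2=1$ cancellation in the final $A/B$-reduction --- but the architecture of the argument is the same.
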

In the case of the quantum dilogarithm $\phi_q$ constructed in 
Appendix~\ref{sec.found}, we have 
\begin{equation}
A=\BT\times\BZ,\quad \langle z,m\rangle=z^m,\quad\langle z,m;w,n\rangle=z^nw^m
\end{equation}
so that
\begin{align*}
\hat\phi_q(z,m) &=z^m \int_{\BT\times\BZ}\phi_q(t,k)\langle z,m;1/t,-k\rangle 
\operatorname{d}(t,k)
=z^m \int_{\BT\times\BZ}\phi_q(t,k)z^{-k}t^{-m}\operatorname{d}(t,k)\\
& =z^m \int_{\BT\times\BZ}\psi(1/t,k)z^{-k}t^{-m}\operatorname{d}(t,k)
=z^m \int_{\BT}\psi^0(1/t,1/z)t^{-m}\frac{\operatorname{d}\!t}{2\pi i t} \,.
\end{align*}
We choose
\begin{equation}
B=\BZ\subset\BT\times\BZ
\end{equation}
and
\begin{equation}
\chi((z,m),(w,n))=w^m,
\end{equation}
so that the automorphic factors are trivial:
\begin{equation}
\chi((z,m),(1,n))=1,\quad \text{for all} \quad (z,m,n)\in\BT\times\BZ^2.
\end{equation}
Thus,
\begin{align*}
g((z,m),(w,n))& =\chi((z,m),(w,n))\sum_{k\in\BZ}\hat\phi_q(z,m+k)
\langle 1,k;w,n\rangle\\ &
=w^m\sum_{k\in\BZ}\hat\phi_q(z,m+k)w^k=\sum_{k\in\BZ}\hat\phi_q(z,k)w^k \\ 
& = 
\int_{\BT}\sum_{k\in\BZ}(zw/t)^k\psi^0(1/t,1/z) 
\frac{\operatorname{d}\!t}{2\pi i t} \\ &
= \int_{\BT}\delta_{\BT}(zw/t)\psi^0(1/t,1/z)
\frac{\operatorname{d}\!t}{2\pi i t} =\psi^0(1/(zw),1/z)
\end{align*}
and 
\begin{equation}
\phi(((z,m),(w,n))=g((1/w,-n),(zw,m+n))=\psi^0(1/z,w).
\end{equation}
Taking into account the symmetry of the Beta pentagon identity under the 
negation of all arguments, we conclude that $\psi^0(1/z,w)$ and 
$\psi^0(z,1/w)$ both satisfy the Beta pentagon identity.


\bibliographystyle{hamsalpha}
\bibliography{biblio}
\end{document}